\def\RR{\mathbb{R}}
\def\EE{\mathbb{E}}
\def\bT{\mathbb{T}}
\def\bL{\mathbb{L}}
\def\bN{\mathbb{N}}
\def\cA{\mathcal{A}}
\def\cF{\mathcal{F}}
\def\cC{\mathcal{C}}
\def\cD{\mathcal{D}}
\def\cH{\mathcal{H}}
\def\cI{\mathcal{I}}
\def\cJ{\mathcal{J}}
\def\cK{\mathcal{K}}
\def\cS{\mathcal{S}}
\def\cZ{\mathcal{Z}}
\def\cN{\mathcal{N}}
\def\sB{\mathscr{B}}
\def\LL{[-L,L]}
\def\TT{\mathbf{T}}
\newcommand\bP{\mathbf{P}}
\newcommand\HH{\mathfrak H}
\newcommand\diam { {\rm{diam\,}}}
\def\sB{\mathscr{B}}
\def\ls{{\lesssim}}
\def\es{{\simeq}}
\def\al{{\alpha}}
\def\tcg{\textcolor{green}}
\newcommand{\ep}{\varepsilon}
\newcommand{\si}{\sigma}
\newcommand{\affine}{{\rm add}}
\newcommand{\1}{{\bf 1}}
\newcommand{\blc}{\big(}
\newcommand{\brc}{\big)}
\newcommand{\Blc}{\Big(}
\newcommand{\Brc}{\Big)}
\newcommand{\blk}{\big[}
\newcommand{\brk}{\big]}
\newcommand{\Blk}{\Big[}
\newcommand{\Brk}{\Big]}
\newcommand{\lc}{\left(}
\newcommand{\rc}{\right)}
\newcommand{\lk}{\left[}
\newcommand{\rk}{\right]}
\newcommand{\lt}{\left }
\newcommand{\rt}{\right}
\newtheorem{theorem}{Theorem}[section]
\newtheorem{lemma}[theorem]{Lemma}
\newtheorem{proposition}[theorem]{Proposition}
\newtheorem{remark}[theorem]{Remark}
\newtheorem{corollary}[theorem]{Corollary}
\newtheorem{definition}[theorem]{Definition}
\theoremstyle{definition}
\numberwithin{equation}{section}
\begin{document}
 
\title{Stochastic Heat Equation with general 
rough   noise  
}
\thanks{Supported by   an NSERC discovery grant   and a startup fund from   University of Alberta at Edmonton.}

\author{Yaozhong Hu} 
\address{Department of Mathematical and Statistical Sciences, University of Alberta, Edmonton, AB T6G 2G1, Canada}
\email{yaozhong@ualberta.ca}
 

\author{Xiong Wang}
\address{Department of Mathematical and Statistical Sciences, University of Alberta, Edmonton, AB T6G 2G1, Canada}
\email{xiongwang@ualberta.ca}



\keywords{Rough Gaussian noise, (nonlinear) stochastic heat equation, additive noise, temporal and spatial asymptotic growth,
  H\"older constants over unbounded domain, Majorizing measure, weak solution, strong solution, weighted spaces,  weighted heat kernel estimates,  pathwise uniqueness.   }

\maketitle

\begin{abstract}
We study the well-posedness of a nonlinear one    dimensional stochastic heat equation   driven by  Gaussian noise:
$\frac{\partial u }{\partial t}=\frac{\partial^2 u }{\partial x^2}+\sigma(u )\dot{W} $,   where  $\dot{W} $  is white in time and     fractional in space with Hurst parameter $H\in(\frac 14,\frac 12)$.    In a
 recent paper 
 \cite{HHLNT2017}  by Hu, Huang, L\^{e}, Nualart and Tindel a technical and unusual condition 
 of $\sigma(0)=0$ was assumed which is critical in their approach. 
The main effort of   this paper is to  remove this     condition.    The idea is to work on a weighted space $\cZ_{\lambda,T}^p$ for some power decay weight
 $\lambda(x)=c_H(1+|x|^2)^{H-1}$.    
In addition,  when $\si(u)=1$ we obtain 
the exact asympotics
  of  the solution $u_{\affine}(t,x)$  as $t$ and $x$ go  to infinity.
In particular, we find the exact  growth of $\sup_{|x|\leq L}{|u_{\affine}(t,x)|}$ and  the sharp growth rate for the H\"older
coefficients, namely,   $\sup_{|x|\leq L} \frac{|  u_{\affine}(t,x+h)-u_{\affine}(t,x)|}{|h|^\beta}$ and $\sup_{|x|\leq L} \frac{| u_{\affine}(t+\tau,x)-u_{\affine}(t,x)|}{\tau^\al}$.  
\end{abstract}
 
 \begin{abstract}
 Nous \'etudions une \'equation de chaleur stochastique \'a une dimension spatiale non lin\'eaire entr\^an\'ee par le bruit gaussien:
$ \frac {\partial u} {\partial t} = \frac{\partial ^ 2 u} {\partial x ^ 2} + \sigma (u) \dot {W} $, o\`u $ \dot {W} $ est blanc dans le temps et   {  fractionnaire dans le espace avec le param\`etre Hurst 
$ H \in (\frac 14, \frac 12) $.}     Dans un
 article r\'ecent
 \cite{HHLNT2017} par Hu, Huang, L\^{e}, Nualart et Tindel une condition technique et inhabituelle
 de $ \sigma (0) = 0 $ a \'et\'e suppos\'e, 
 ce qui est critique dans leur approche.
Le principal effort de ce document est de supprimer cette condition. L'id\'ee est de travailler sur un espace pond\'er\'e $ \cZ _ {\lambda, T} ^ p $ pour un certain poids de d\'ecroissance de puissance
 $ \lambda (x) = c_H (1+ | x | ^ 2) ^ {H-1} $.
Lorsque $ \si (u) = 1 $ nous obtenons 
les asympotiques exacts
  de la solution $ u_ {\affine} (t, x) $ as $ t $ et $ x $ vont à l'infini. 
En particulier, nous trouvons la croissance exacte de 
$ \sup_{| x | \leq L}  | u _{\affine} (t, x) |  $ et la croissance exacte
  des   
coefficients de H\"older, c'est-\`a-dire,  $ \sup_{| x | \leq L} \frac {| u _{\affine} (t, x + h) -u _ {\affine} (t, x) |} {| h | ^ \beta} $ et $ \sup_ {| x | \leq L} \frac {| u _ {\affine} (t + \tau, x) -u _ {\affine} (t, x) |} {\tau ^ \al} $.
 \end{abstract}

\section{Introduction  and main results}
In this paper, we consider the following one   dimensional
(in space variable) nonlinear  stochastic   heat  equation driven by  the  Gaussian noise which is white in time and fractional in space:
\begin{equation}\label{SHE}
  \frac{\partial u(t,x)}{\partial t}=\frac{\partial^2 u(t,x)}{\partial x^2}+\sigma(t,x,u(t,x))\dot{W}(t,x),\quad   t\geqslant 0,\quad
   x\in\RR\,,
\end{equation}
where $W(t,x)$ is a centered Gaussian process with covariance given by
\begin{equation}\label{CovW}
  \EE[W(t,x)W(s,y)]=\frac 12 (s\wedge t) \blc |x|^{2H}+|y|^{2H}-|x-y|^{2H} \brc 
\end{equation}
and where $\frac 14<H<\frac 12$ and $\dot{W}(t,x)=\frac{\partial^2  }{\partial t\partial x}
W(t,x)$.


 There has been a lot of  work on stochastic heat equations driven by general Gaussian noises.  We refer to \cite{hu19} for a short survey and for more references.  The main feature of this work is that the noise is rough  (e.g. $\frac 14<H<\frac 12$) in  { the} space variable.
 We mention three  works that are directly related to this  specific Gaussian noise structure.   The first two  are
  \cite{BJQ2015}  and \cite{HHLNT2019}, where  the authors study the existence, uniqueness and some properties such as moment bounds of the   mild solution   when   the diffusion coefficient $\sigma$  is affine (i.e. ${  \si(t,x,u)=}\sigma(u)=au+b$)  in \cite{BJQ2015} or  linear
  (i.e. $\sigma(u)=au $)  in  \cite{HHLNT2019}.  After these works researchers tried to study \eqref{SHE} for general nonlinear $\sigma$.   
   However, the  method effective for affine (and linear) equations   cannot  no longer work.  
{     One difficulty
 for general nonlinear diffusion coefficient $\sigma$
  is    that we cannot  no longer bound
 $|\sigma(u_1)-\sigma(u_2)-\sigma(v_1)+\sigma(v_2)|$ by  a multiple of
 $ |u_1 - u_2 - v_1+v_2 |$  (which is possible only in the   affine case).}
A breakthrough was made in   \cite{HHLNT2017}.
 However, to solve    equation \eqref{SHE}   the authors in \cite{HHLNT2017}
  have to  assume  that  $\sigma(0)=0$, 
%
%
which  does not even  cover the affine case studied in \cite{BJQ2015}!   The main motivation of this paper is to remove the condition $\sigma(0)=0$
assumed in \cite{HHLNT2017}.   To this end we need to understand   why this condition is so crucial there.  We first find out that this condition    $\sigma(0)=0$   can ensure the solution lives in the space 
 $\cZ_T^p$    
  (see \cite{HHLNT2017} or \eqref{ZNorm} in Section \ref{weak} of this paper with $\lambda(x)=1$).  As we shall see that even in  the simplest case $\sigma(u)\equiv 1$ (of the case $\si(0)\not=0$), 
  the solution  
  is {  no longer}  in  $\cZ_T^p$    (see e.g. Theorem \ref{LUEESup}   and Proposition \ref{Prop.SupN}). Moreover, the initial condition 
  in \cite{HHLNT2017}  must be integrable to guarantee the solution belongs to $\cZ_T^p$, which means $u_0(x)=1$ is excluded.
Thus,  {  to remove the restriction $\si(0)=0$,  
we must find another appropriate  solution space. }  Our  idea is to  introduce a decay weight (as the spatial variable $x$ goes to infinity)  to {  enlarge }  the  solution space  $\cZ_T^p$  to a weighted space
$\cZ_{\lambda, T}^p$ for some suitable power decay function $\lambda(x)$.
This weight function  will have to be chosen appropriately (not too fast and not too slow.
See Section 2 for details).

The introduction of  the weight makes all the tools used in    \cite{HHLNT2017}   {collapse}.  As we can see we shall need  a whole set of new understandings of the heat kernel to complete our program. People may wonder whether   one can still just use  $\cZ_T^\infty $ 
for our solution space. This question is natural since we work on the whole real line  $\RR$ for the space variable. A constant function is in $L^\infty(\RR)$ but not in $L^p(\RR)$ for any finite $p$.
{  If it happens to be   possible to use $\cZ_T^\infty $
(without weight), then many  computations in \cite{HHLNT2017}  will still be valid and the problem becomes greatly simplified. }

To see if  this is possible or not we consider the solution $u_\affine(t,x)$ to the equation with additive noise, which is  the solution to \eqref{SHE} with $\sigma(u)=1$ and with initial condition $u_0(x)=0$.  {  This is the simplest case that $\si(0)\not=0$.  
To find out}  if $u_\affine(t,x)$ is in $\cZ_T^\infty $ or not (or to see if
 the introduction of decay weight $\lambda$ is necessary  or  not), we shall find the sharp bound of the      solution $u_\affine(t,x)$     as $x$ goes to infinity.   In other words,  we shall find the exact explosion rate of
$\sup_{|x|\le L} |u_\affine(t,x)|$ as $L$ goes to infinity.  This problem has a great value of its own. To study  the supremum of a family of random variables,  there are two    powerful  tools: one is to use
 the independence and  the other one is to use the
 martingale inequalities.   However,
$u_\affine(t,x)$ is not a martingale with respect to the  spatial variable
$x$ (nor it is a  {martingale}  with respect to the time variable $t$)   and since the noise $\dot W$ is not independent in the spatial variable either, the application of independence may be much more involved (We refer,  however, to \cite{Chen2016, CHNT2017, DMD2013, DMDS2013}   for  some successful applications  of the independence in the stochastic heat equation \eqref{SHE}). In this work, we  shall use instead  the idea of   majorizing measure to obtain  sharp   growth of
$\sup_{|x|\le L} |u_\affine(t,x)|$ 
and $   \sup_{0\le t\le T,|x|\le L} |u_\affine(t,x)|$, as $L$ and $T$ go to infinity, both in terms of expectation and  almost surely.
 More precisely, we have

\begin{theorem}\label{LUEESup}
    Let the Gaussian field $u_\affine(t,x)$    be the solution to \eqref{SHE} with $\sigma(t,x, u)\\=1$ and $u_0(x)=0$.    Then, we have the following statements.
    \begin{enumerate}
    \item[(1)] There are two  positive
    constants $c_H$ and $ C_H$, independent of $T$ and $L$,   such that
\begin{equation}
c_H \, \Uppsi(T, L)\le
\EE \lk\sup_{{0\le t\le T\atop  -L\le x\le L}}  u_\affine (t,x) \rk\le \EE \lk\sup_{{0\le t\le T\atop  -L\le x\le L}} |u_\affine (t,x)|\rk\le C_H  \, \Uppsi(T, L)\,,
 \label{SupEEasmp}
    \end{equation}
    where $\Uppsi_0(T, L):=1+\sqrt{\log_2\lc L/\sqrt{T} \rc}\,, L\ge \sqrt{T}$ and 
    \begin{equation}
 \Uppsi(T, L):= \begin{cases} T^{\frac{H}{2}}\Uppsi_0(T, L) & \text{if $L\geq \sqrt{T}$},  \\
      T^{\frac{H}{2}} & \text{if $L < \sqrt{T}$}\,.
    \end{cases}\label{e.def_rho}
    \end{equation}
\item[(2)]
    There are two strictly positive random constants $c_H$ and $C_H$, independent of $T$ and $L$,  such that almost surely
    \begin{align}\label{Supasmp}
    c_H\, T^{\frac{H}{2}}\Uppsi_0(T, L)  &\le   \sup_{(t,x)\in\Upupsilon(T,L)}  u_\affine (t,x)   \\
    &\leq  \sup_{(t,x)\in\Upupsilon(T,L)} |u_\affine (t,x)|\le C_H\,
        T^{\frac{H}{2}}\Uppsi_0(T, L)   \,, \nonumber
    \end{align}
  where $\Upupsilon(T,L)
  =\{(t,x)\in[0,T]\times[-L,L]~:~L\geq \sqrt{T}\}$\,.
    \end{enumerate}
\end{theorem}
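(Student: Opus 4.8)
The plan is to use that $u_{\affine}$ is the centered Gaussian field $u_{\affine}(t,x)=\int_0^t\!\int_{\RR}G_{t-s}(x-y)\,W(ds,dy)$, with heat kernel $G_t(x)=(4\pi t)^{-1/2}e^{-x^2/(4t)}$, and to treat $\sup u_{\affine}$ through the geometry of its canonical metric. First I would record the parabolic scaling: from \eqref{CovW} one has $W(a^2t,ax)\stackrel{d}{=}a^{1+H}W(t,x)$, which with $G_{a^2t}(ax)=a^{-1}G_t(x)$ gives $u_{\affine}(a^2t,ax)\stackrel{d}{=}a^{H}u_{\affine}(t,x)$. Choosing $a=\sqrt{T}$, and noting that $u_{\affine}(t,\cdot)$ is spatially stationary (its spectral density $c_H|\xi|^{1-2H}$ is position–independent), the supremum over $[0,T]\times\LL$ equals in law $T^{H/2}$ times the supremum over $[0,1]\times[-M,M]$ with $M=L/\sqrt{T}$. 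It thus suffices to prove $\EE\big[\sup_{[0,1]\times[-M,M]}u_{\affine}\big]\es 1+\sqrt{\log_2 M}$ for $M\ge1$ (and $\es1$ for $M<1$), which is exactly the factor $\Uppsi_0$.

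Next I would compute, via the spectral representation, the variance $\EE[u_{\affine}(t,x)^2]=c_H t^{H}$ and two–sided bounds on the canonical metric $d\big((t,x),(s,y)\big)=\big(\EE[(u_{\affine}(t,x)-u_{\affine}(s,y))^2]\big)^{1/2}$, obtained by integrating $\int_0^t\!\int_{\RR}e^{-2s\xi^2}\,(\cdots)\,|\xi|^{1-2H}\,d\xi\,ds$ and tracking the cutoff $|\xi|\ls t^{-1/2}$: the temporal increment is $\es|t-s|^{H/2}$ and the spatial increment is $\es\min\big(|x-y|^{H},t^{H/2}\big)$. Hence on $[0,1]\times[-M,M]$ one has $d\es|t-s|^{H/2}+\min(|x-y|^{H},1)$, the diameter is $\es1$, and a ball of radius $\ep$ is a product of a $t$–interval of length $\es\ep^{2/H}$ with an $x$–interval of length $\es\ep^{1/H}$. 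For $M\ge1$ the covering number therefore satisfies $\log N(\ep)\es\log M+\tfrac{3}{H}\log(1/\ep)$ for $\ep\le1$.

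For the upper bound I would apply Dudley's entropy bound (the majorizing–measure upper bound): $\EE[\sup u_{\affine}]\ls\int_0^{1}\sqrt{\log N(\ep)}\,d\ep\ls\int_0^1\sqrt{\log M+\tfrac{3}{H}\log(1/\ep)}\,d\ep\es1+\sqrt{\log M}$, and pass from $\sup u_{\affine}$ to $\sup|u_{\affine}|$ by $\EE\sup|u_{\affine}|\le2\,\EE\sup u_{\affine}$ (using $-u_{\affine}\stackrel{d}{=}u_{\affine}$). For the matching lower bound I would invoke Sudakov minoration: at $t=1$ the points $(1,x_k)$ with $x_k=k$, $k=1,\dots,\lfloor M\rfloor$, are pairwise $\es1$–separated in $d$, so $\EE[\sup u_{\affine}]\gs\sqrt{\log M}$; together with the two–point estimate $\EE[\sup u_{\affine}]\ge\tfrac{1}{\sqrt{2\pi}}\,d\big((1,0),(1,1)\big)\es1$ this yields $\gs1+\sqrt{\log M}$. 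Although Sudakov minoration is generally not sharp, for this essentially stationary, polynomially scaling metric it matches Dudley, and together the two directions prove part (1).

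Finally, for part (2) I would upgrade the expectation estimates to almost–sure ones by Gaussian concentration. The Borell–TIS inequality gives $\mathbb{P}\big(|\sup u_{\affine}-\EE\sup u_{\affine}|>r\big)\le2\exp\big(-r^2/(2\bar\sigma^2)\big)$ with $\bar\sigma^2\es T^{H}$, and, evaluating this along a geometric net $T_m,L_n\to\infty$, a Borel–Cantelli argument together with the monotonicity of the supremum in $T$ and $L$ transfers the two–sided bound to every $(T,L)$ with $L\ge\sqrt T$, giving \eqref{Supasmp} with random constants. I expect the principal obstacle to be the sharp two–sided control of the canonical metric — in particular the saturation of the spatial increment at the scale $\sqrt t$ and the verification that both the entropy integral and the Sudakov bound reproduce exactly the rate $\sqrt{\log_2(L/\sqrt T)}$ rather than only one inequality; the almost–sure passage, where one must check that the concentration fluctuations do not swamp the logarithmic main term near the boundary $L\es\sqrt T$, is the other delicate point.
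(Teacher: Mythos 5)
Your proposal is correct and follows essentially the same route as the paper's proof: two-sided bounds on the canonical metric $d_1\asymp |x-y|^H\wedge (t\wedge s)^{H/2}+|t-s|^{H/2}$ (the paper's Lemma \ref{LemMetricProp}, proved via Plancherel), a chaining upper bound (your Dudley entropy integral is equivalent to the paper's explicit uniform admissible partitions in Talagrand's theorem), Sudakov minoration with $\asymp L/\sqrt{T}$ well-separated spatial points (plus a two-point temporal comparison when $L<\sqrt{T}$) for the lower bound, and Borell's inequality plus Borel--Cantelli with monotonicity of the supremum for the almost-sure version, where you correctly flag the same boundary delicacy near $L\asymp\sqrt{T}$ that the paper handles by taking $L\ge T^{(1+\varepsilon)/2}$ and letting $\varepsilon\to 0$. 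The only genuine differences are cosmetic: your parabolic-scaling reduction to the slab $[0,1]\times[-M,M]$ with $M=L/\sqrt{T}$, which the paper does not use (it carries $T^{H/2}$ through all estimates instead), and the substitution of the entropy integral for the $\gamma_2$ partition construction; both yield identical bounds.
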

{  Let us point out that
Theorem  \ref{LUEESup}  is an extension of  Theorem 1.2 of  \cite{DMD2013} and Theorem 2.3 of \cite{DMDS2013} to spatial rough   noise.  }

It is well-known that the   solution to   equation
\eqref{SHE}, if exists,  is usually H\"older continuous on any bounded domain. But    usually  it is not   H\"older continuous on the whole space. An interesting question to ask is
how the H\"older coefficient depends on the size of the domain. 
Since the  additive solution $u_\affine(t,x)$ is a Gaussian random field we will be able to obtain  sharp dependence
on  the size of the domain of  the H\"older coefficient.
In the following  theorem 
we   state our result on the H\"older continuity in spatial variable over unbounded domain. 
\begin{theorem}\label{LUHolderEESup}
	Let $u_\affine (t,x)$      be the solution to \eqref{SHE} with $\sigma(t,x, u)=1$ and $u_0(x)=0$   and denote
	\[
	\Delta _h u_\affine (t,x):=u_\affine (t,x+h)-u_\affine (t,x)\,.
	\]
  Let $0<\theta<H$ be given. Then, there are  positive constants $c$, $c_H$
	and $C_{H, \theta}$ such that    the following inequalities hold  true:
\begin{align}
  c_{H}\,|h|^{H} \Uppsi_0(t, L)&\leq \EE \lk\sup_{-L\le x\le L} \Delta _h u_\affine (t,x) \rk \label{SupHolderEEasmp}\\
       & \leq \EE \lk\sup_{-L\le x\le L} |\Delta _h u_\affine (t,x)|\rk\leq C_{H,\theta}\, t^{\frac{H-\theta}{2}} |h|^{\theta} \Uppsi_0(t, L)  \nonumber
	\end{align}
for all  
	$L\geq \sqrt{t}>0$ and    $0<|h|\leq c(\sqrt{t}\wedge1)$. Moreover,  there are two (strictly) positive  random constants $c_{H}$ and
    $C_{H,\theta} $ 
\begin{align}
	c_{H}\, |h|^{H} \Uppsi_0(t, L)
	 &\leq \sup_{-L\le x\le L}   \Delta _h u_\affine (t,x)
\label{SupasmpHolder}  \\
	 &\leq \sup_{-L\le x\le L} \left| \Delta _h u_\affine (t,x) \right| \leq
    C_{H,\theta}\, t^{\frac{H-\theta}{2}} |h|^{\theta} \Uppsi_0(t, L)
\nonumber
	\end{align}
	for all  
	$L\geq \sqrt{t}>0$ and    $0<|h|\leq c(\sqrt{t}\wedge1)$.
\end{theorem}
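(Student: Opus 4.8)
The plan is to exploit the special structure of the additive equation: for $u_0=0$ and $\sigma\equiv1$ the field $u_\affine(t,\cdot)$ is a centered \emph{Gaussian} field whose law is translation invariant in $x$ (the spatial noise $\dot W(t,\cdot)$ is stationary, with spectral measure $c_H|\xi|^{1-2H}\,d\xi$). Hence the increment field $x\mapsto \Delta_h u_\affine(t,x)$ is again centered, Gaussian and stationary in $x$, and both inequalities fall under the majorizing–measure theory for suprema of Gaussian processes. I would therefore run exactly the scheme already used for Theorem~\ref{LUEESup}, now applied to $\Delta_h u_\affine(t,\cdot)$ on $[-L,L]$, and the whole problem reduces to understanding three quantities: the single-point amplitude $\EE[\Delta_h u_\affine(t,x)^2]$, the canonical pseudo-metric $d(x,x')^2:=\EE\big[(\Delta_h u_\affine(t,x)-\Delta_h u_\affine(t,x'))^2\big]$, and the spatial decorrelation length. (Stationarity, together with the scaling identity $u_\affine(t,x)\overset{d}{=}t^{H/2}u_\affine(1,x/\sqrt t)$, also lets one reduce the bookkeeping to $t=1$ if desired.)

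The first analytic step is to compute these quantities from the spectral representation of the Wiener integral against the heat kernel, which gives
\begin{equation*}
d(x,x')^2 \asymp \int_\RR \min\!\big(\xi^{-2},\,t\big)\,\sin^2\!\Big(\tfrac{\xi(x-x')}{2}\Big)\,\sin^2\!\Big(\tfrac{\xi h}{2}\Big)\,|\xi|^{1-2H}\,d\xi ,
\end{equation*}
and the analogous expression without the first $\sin^2$ factor for the amplitude. A scale analysis splitting the frequency axis at $1/\sqrt t$, $1/|h|$ and $1/|x-x'|$ should yield, for $0<|h|\le c(\sqrt t\wedge1)$: the amplitude $\EE[\Delta_h u_\affine(t,x)^2]\asymp|h|^{2H}$; the local behaviour $d(x,x')\asymp|x-x'|^{H}$ for $|x-x'|\le|h|$, saturating to $d(x,x')\asymp|h|^{H}$ for $|h|\le|x-x'|\le\sqrt t$; and genuine two-sided decorrelation once $|x-x'|\gtrsim\sqrt t$. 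Equivalently, $d(x,x')\lesssim\min(|x-x'|^{H},|h|^{H})$ on each window of size $\sqrt t$, which is what feeds Dudley's entropy bound.

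With these estimates I would prove the expectation bound \eqref{SupHolderEEasmp} by partitioning $[-L,L]$ into $\asymp L/\sqrt t$ windows of length $\sqrt t$, on each of which the increment field is strongly correlated while distinct windows are approximately independent. For the \emph{upper} bound, Dudley's entropy integral on one window (using $d\lesssim\min(|x-x'|^H,|h|^H)$ and covering numbers $N(\epsilon)\asymp\sqrt t\,\epsilon^{-1/H}$) gives a within-window contribution of order $|h|^H\sqrt{1+\log(\sqrt t/|h|)}$; here the backoff $\theta<H$ enters precisely to absorb the logarithm, $|h|^H\sqrt{1+\log(\sqrt t/|h|)}\lesssim_\theta t^{(H-\theta)/2}|h|^\theta$, producing the clean prefactor. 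Taking the maximum over the $\asymp L/\sqrt t$ decorrelated windows — each $\sup$ being sub-Gaussian with variance proxy $\asymp|h|^{2H}$ — then contributes the Gaussian factor $\sqrt{\log_2(L/\sqrt t)}$, i.e. $\Uppsi_0(t,L)$, and assembling the two pieces gives the right-hand inequality. For the \emph{lower} bound, Sudakov minoration applied to $\asymp L/\sqrt t$ points that are pairwise $\sqrt t$-separated, hence at mutual canonical distance $\gtrsim|h|^{H}$, yields $c_H|h|^{H}\Uppsi_0(t,L)$ directly for the one-sided supremum (and a fortiori for the supremum of the absolute value); stationarity guarantees a genuinely linear number of comparable, well-separated increments.

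Finally, the almost-sure statement \eqref{SupasmpHolder} is obtained by upgrading the mean estimates via the Borell--Tsirelson--Ibragimov--Sudakov concentration inequality: $\sup_{|x|\le L}\Delta_h u_\affine(t,\cdot)$ concentrates around its mean with sub-Gaussian fluctuations of scale $\asymp|h|^{H}$, so combining the two-sided mean bounds with a Borel--Cantelli argument along a suitable sequence in $(t,L)$ produces the random constants $c_H,C_{H,\theta}$. I expect the main obstacle to be \emph{matching} the two sides: the true within-window amplitude is $|h|^{H}$, but the crude entropy integral necessarily overshoots, and one must verify that the only loss is the polynomial factor $(\sqrt t/|h|)^{H-\theta}$ coming from backing the Hölder exponent off from $H$ to $\theta$, while the logarithmic window-counting factor $\Uppsi_0(t,L)$ remains \emph{identical} in Dudley's upper estimate and in Sudakov's lower estimate. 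Establishing the sharp two-sided decorrelation at the scale $\sqrt t$, so that the block count is the same on both sides, is the delicate point on which the sharpness of the result rests.
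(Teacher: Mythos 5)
Your proposal is correct and follows essentially the same route as the paper: the same Plancherel/spectral computation of the canonical metric (your $\min(\xi^{-2},t)$ spectral density is exactly the paper's integrand $[1-e^{-2t\xi^2}][1-\cos(|x-x'|\xi)][1-\cos(h\xi)]\,\xi^{-1-2H}$), the same two-sided metric estimates (upper bound $d\lesssim |h|^{\theta}\big(|x-x'|^{H-\theta}\wedge t^{\frac{H-\theta}{2}}\big)$ and lower bound $d\gtrsim |h|^{H}$ at separation $\geq\sqrt{t}$, which the paper proves by the oscillation-averaging argument you correctly flag as the delicate point), a chaining upper bound, Sudakov minoration at $\sqrt{t}$-spaced points for the lower bound, and Borell's inequality plus Borel--Cantelli for the almost sure statements. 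Your only deviation --- Dudley's entropy integral on windows of length $\sqrt{t}$ followed by a sub-Gaussian maximum over the $\asymp L/\sqrt{t}$ blocks, rather than the paper's single application of Talagrand's theorem with uniform admissible partitions split at the scale where cells reach size $\sqrt{t}$ --- is an equivalent repackaging of the same chaining computation, not a different method.
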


Next,  we study the H\"older continuity     in time over the unbounded domain. We state  the following.
\begin{theorem}\label{LUtHolderEESup}
Let $u_\affine (t,x)$      be the solution to \eqref{SHE} with $\sigma(t,x, u)=1$ and $u_0(x)=0$   and denote
	\begin{equation*}
	\Delta_\tau u_\affine(t,x):=u_\affine (t+\tau,x)-u_\affine(t,x) \,.
	\end{equation*}
Let $0<\theta<H/2$.   Then,  
there are  positive constants $c$, $c_H$
	and $C_{H, \theta}$ such that 
\begin{align}	
 c_{H}\, \tau^{\frac H2} \Uppsi_0(t, L)&\leq \EE \lk\sup_{-L\le x\le L}  \Delta_\tau u_\affine(t,x) \rk\label{SuptHolderEEasmp}\\
    &\leq \EE \lk\sup_{-L\le x\le L} |\Delta_\tau u_\affine(t,x)|\rk\leq
     C_{H,\theta}\, t^{\frac{H}{2}-\theta} \tau^{\theta} \Uppsi_0(t, L) \nonumber
\end{align}
for all  
	$L\geq \sqrt{t}>0$ and    $0<\tau\leq c( {t}\wedge1)$.
We also have the almost sure version of the above result. 
\begin{align}		
c_{H}\, \tau^{\frac H2} \Uppsi_0(t, L) &\leq \sup_{-L\le x\le L} \Delta_\tau u_\affine(t,x)
\label{SuptasmpHolder}\\
&\leq \sup_{-L\le x\le L} |\Delta_\tau u_\affine(t,x) |\leq
    C_{H,\theta}\, t^{\frac{H}{2}-\theta} \tau^{\theta}\Uppsi_0(t, L)   \nonumber
\end{align}
for all  
	$L\geq \sqrt{t}>0$ and    $0<\tau\leq c( {t}\wedge1)$.  Now, $c_H$
	and $C_{H, \theta}$ are random. 
\end{theorem}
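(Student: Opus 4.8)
The plan is to treat $V_x:=\Delta_\tau u_\affine(t,x)$ as a centered Gaussian field that is stationary in $x$ (the noise is spatially homogeneous and $u_0\equiv0$) and to feed its variance and canonical metric into the same majorizing-measure machinery already used for Theorem~\ref{LUEESup} and Theorem~\ref{LUHolderEESup}. Writing the mild solution through its spectral representation, with heat symbol $e^{-r\xi^2}$ and spatial spectral density proportional to $|\xi|^{1-2H}$, I would split the stochastic integral defining $V_x$ into the \emph{new} part (noise on $[t,t+\tau]$) and the \emph{overlap} part (noise on $[0,t]$, appearing as a difference of two heat kernels). Since the noise is white in time these two parts are independent, so their variances add. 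Rescaling $\xi\mapsto\xi/\sqrt\tau$ shows the new part equals a constant multiple of $\tau^{H}$ exactly, which already gives $\EE[V_x^2]\ge(\text{new part})\gtrsim\tau^{H}$; the same scaling bounds the overlap part by $C_H\tau^{H}$. Hence $\EE[V_x^2]\asymp\tau^{H}$ uniformly for $0<\tau\le c(t\wedge1)$, so the pointwise standard deviation is $\asymp\tau^{H/2}$.

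For the modulus of continuity I would compute the canonical metric $d(x,x')^2=\EE[(V_{x'}-V_x)^2]$, which is a mixed space--time increment whose Fourier transform carries, relative to the variance, the extra factor $|e^{i(x'-x)\xi}-1|^2$. Interpolating $|e^{i(x'-x)\xi}-1|^2\lesssim|(x'-x)\xi|^{2\theta}$ and rescaling, both the new and the overlap contributions are bounded by $C_{H,\theta}\,\tau^{H-\theta}|x-x'|^{2\theta}$ for every $\theta<H$ (the $\xi$-integral converging at infinity precisely because $\theta<H$). Combined with the trivial bound $d(x,x')\le 2\,\EE[V_x^2]^{1/2}\asymp\tau^{H/2}$, this shows the metric behaves like $\tau^{(H-\theta)/2}|x-x'|^{\theta}$ for $|x-x'|\lesssim\sqrt\tau$ and \emph{saturates} at $\asymp\tau^{H/2}$ for larger separations; in particular the field decorrelates on the spatial scale $\sqrt\tau$.

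With these two inputs the supremum estimates follow the scheme of Theorem~\ref{LUHolderEESup}. The majorizing-measure upper bound applied to this truncated metric yields $\EE[\sup_{|x|\le L}|V_x|]\lesssim\tau^{H/2}\sqrt{\log(L/\sqrt\tau)}$, which I would repackage in terms of $\Uppsi_0(t,L)$ via $\log(L/\sqrt\tau)\le\log(L/\sqrt t)+\tfrac12\log(t/\tau)$: the first piece gives $\tau^{H/2}\sqrt{\log(L/\sqrt t)}\le t^{H/2-\theta}\tau^{\theta}\,\Uppsi_0(t,L)$ because $\tau\le t$ and $\theta<H/2$, while the second (within-block) piece $\tau^{H/2}\sqrt{\log(t/\tau)}$ is $\le C_{H,\theta}\,t^{H/2-\theta}\tau^{\theta}$ since $u\mapsto u^{-(H/2-\theta)}\sqrt{\log u}$ is bounded on $u=t/\tau\ge1$ — this is exactly where $\theta<H/2$ enters. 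For the matching lower bound I would invoke Sudakov minoration: the decorrelation on scale $\sqrt\tau$ lets me pack $\gtrsim L/\sqrt t$ points spaced $\sqrt t$ apart, hence at pairwise canonical distance $\gtrsim\tau^{H/2}$, giving $\EE[\sup_{|x|\le L}V_x]\gtrsim\tau^{H/2}\sqrt{\log(L/\sqrt t)}\asymp\tau^{H/2}\Uppsi_0(t,L)$. The almost sure statement \eqref{SuptasmpHolder} would then follow from the expectation bounds together with the Borell--TIS Gaussian concentration inequality (whose fluctuation scale is the maximal standard deviation $\tau^{H/2}$) and a Borel--Cantelli argument over a countable grid of triples $(t,\tau,L)$, upgraded to the stated uniform range by monotonicity in $L$ and continuity in $(t,\tau)$.

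The main obstacle I anticipate is the canonical-metric analysis rather than the supremum bounds themselves: because $H<1/2$ the spectral density $|\xi|^{1-2H}$ is non-integrable at infinity, so all the relevant $\xi$-integrals are controlled only through the heat-kernel smoothing, and the temporal increment splits its weight between the scale $\sqrt\tau$ (the new part and the high-frequency bulk of the overlap part) and the scale $\sqrt t$ (the low-frequency overlap part). Pinning down the \emph{two-sided} variance constants and, above all, the saturating modulus of continuity with the correct exponent $\theta$ uniformly in $0<\tau\le c(t\wedge1)$ is the delicate computation; once the metric is secured, the repackaging into $t^{H/2-\theta}\tau^{\theta}\Uppsi_0(t,L)$ through $\theta<H/2$ is the mechanism that absorbs the genuine $\sqrt{\log(t/\tau)}$ gap between the true decorrelation scale $\sqrt\tau$ and the scale $\sqrt t$ appearing in $\Uppsi_0$.
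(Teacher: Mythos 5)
Your proposal is correct and follows essentially the same route as the paper: a Plancherel/spectral computation of the canonical metric of $x\mapsto\Delta_\tau u_\affine(t,x)$ (the paper's two-term function $f(t,\tau,\xi)$ is exactly your independent new-plus-overlap decomposition), an interpolation bound $1-\cos y\lesssim |y|^{2\theta}$ for the upper metric estimate, Talagrand's majorizing measure theorem for the upper bound, Sudakov minoration at spacing $\sqrt t$ (using the metric lower bound $\gtrsim\tau^{H/2}$ for separated points) for the lower bound, and Borell's inequality plus Borel--Cantelli for the almost sure version. The only cosmetic difference is that you keep the sharper saturation $d\lesssim\tau^{H/2}$ at scale $\sqrt\tau$ and then absorb the resulting extra $\sqrt{\log(t/\tau)}$ factor using $\theta<H/2$, whereas the paper uses the cruder saturation $d\lesssim\tau^{\theta/2}t^{(H-\theta)/2}$ at scale $\sqrt t$ (its $\theta\in(0,H)$ being your $2\theta$), which yields the stated bound without the repackaging step.
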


The above Theorems \ref{LUEESup}-\ref{LUtHolderEESup} are proved in Section 3.
Now let us return to the equation \eqref{SHE}.
To make things precise we give  here   the  definitions of strong and weak solutions.
\begin{definition}\label{DefMildSol}
 Let $\left\{u(t,x)\,, t\ge 0\,, x\in \RR\right\}$ be a real-valued adapted  stochastic process such that for all $t\in[0,T]$ and $x\in \RR$ the process $\{G_{t-s}(x-y)\sigma(s,y,u(s,y))\1_{[0,t]}(s)\}$ is integrable with respect to $W$ (see Definition \ref{Def_SI_Gene}), where $G_{t}(x):=\frac{1}{\sqrt{4\pi t}}\exp\lc-\frac{x^2}{4t}\rc$ is the heat kernel on $\RR$ associated with
 the  Laplacian operator $\Delta$.
\begin{enumerate}
\item[(i)]\     We say that $u(t,x)$ is a  \emph{strong  (mild)   solution}  to \eqref{SHE} if for all $t\in[0,T]$ and $x\in\RR$ we have
   \begin{equation}\label{MildSol}
     u(t,x)=G_t\ast u_0(x)+\int_{0}^{t}\int_{\RR} G_{t-s}(x-y)\sigma(s,y,u(s,y))W(ds,dy)
   \end{equation}
 almost surely,    where the stochastic integral is understood in the sense of Definition \ref{Def_SI_Gene}.
\item[(ii)]\      We say   \eqref{SHE}   has a \emph{weak solution}   if there exists a probability space with a filtration $(\widetilde{\Omega},\widetilde{\cF},\widetilde{\bP},\widetilde{\cF}_t)$,  a Gaussian random field   $\widetilde{W}$ identical to $W$ in law,  and an  adapted  stochastic process
   $\left\{u(t,x)\,, t\ge 0\,, x\in \RR\right\}$ on this probability space  $(\widetilde{\Omega},\widetilde{\cF},\widetilde{\bP},\widetilde{\cF}_t)$ such that $u(t,x)   $ is a mild solution to   \eqref{SHE}  with respect to
   $(\widetilde{\Omega},\widetilde{\cF},\widetilde{\bP},\widetilde{\cF}_t)$  and  $\widetilde{W}$.
   \end{enumerate}
\end{definition}

Before stating our    theorem
on the existence of a weak solution, we make the following assumption.
  \begin{enumerate}
  \item[\textbf{(H1)}]\  $\sigma(t,x,u)$ is jointly continuous
  over {  $[0, T]\times \RR^2$}  and is at most of  linear growth in $u$ uniformly in $t$ and $x$.  This means
  \begin{equation}\label{LinGrowSigam}
    \sup\limits_{t\in[0,T],x\in\RR}|\sigma(t,x,u)|\leq C (|u|+1)
  \end{equation}
for some positive constant $C $.  We also assume that  it is uniformly  Lipschitzian  in $u$, namely,
\  $\forall\  u,v\in\RR$
  \begin{equation}\label{LipSigam}
    \sup\limits_{t\in[0,T],x\in\RR}
    |\sigma(t,x,u)-\sigma(t,x,v)|\leq C |u-v|\,,
  \end{equation}
  for some constant $C >0$.
\end{enumerate}
We can now state our main theorem of the paper.
\begin{theorem}\label{WeakSol}  Let $ \lambda(x) = c_H(1+|x|^2)^{H-1} $  satisfy  $\int_\RR \lambda(x)dx=1$.
   Assume   that
      $\sigma(t,x,u)$   satisfies hypothesis  \textbf{(H1)}    and that  the initial data $u_0$ belongs to $\cZ_{\lambda,0}^p$
 for some  $p>\frac{6}{4H-1}$ (see Section 4.1 for the definition of $\cZ_{\lambda,T}^p$).
   Then,   there exists a   weak solution to \eqref{SHE} with sample paths   in $\cC([0, T]\times\RR)$ almost surely. In addition,     for any  $\gamma<H-\frac 3p$,    the process $u(\cdot,\cdot)$
    is almost surely   H\"{o}lder continuous   on any compact sets in $[0, T]\times \RR$ of H\"older  exponent $\gamma/2$ with respect to
    the time variable $t$ and of  H\"older  exponent $\gamma $ with respect to
    the spatial variable $x$.
 \end{theorem}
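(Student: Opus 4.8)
The plan is to construct the weak solution by a regularization-and-compactness scheme: I would solve a family of regularized equations for which existence is elementary, establish a priori bounds uniform in the regularization parameter in the weighted space $\cZ_{\lambda,T}^p$, extract a limit by tightness and Skorokhod's theorem, and finally identify this limit as a weak solution. The H\"older regularity will be read off from the same uniform moment estimates. Concretely, first I would mollify the noise in the spatial variable, writing $W_\ep = W \ast_x \rho_\ep$ for a smooth approximate identity $\rho_\ep$; the resulting noise is function-valued in space, so the stochastic integral reduces to a classical Walsh--Da~Prato--Zabczyk integral and the fourth-order-difference obstruction to Picard iteration described in the Introduction disappears. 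Under \textbf{(H1)} a standard contraction then yields, for each $\ep>0$, a unique mild solution $u_\ep$ of the regularized equation
\[
 u_\ep(t,x)=G_t\ast u_0(x)+\int_0^t\int_\RR G_{t-s}(x-y)\,\sigma(s,y,u_\ep(s,y))\,W_\ep(ds,dy)\,.
\]

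The technical heart is to bound $\{u_\ep\}$ in $\cZ_{\lambda,T}^p$ uniformly in $\ep$, with a constant depending only on $\|u_0\|_{\cZ_{\lambda,0}^p}$, $T$, $H$, $p$ and the constant $C$ of \textbf{(H1)}. This rests on the weighted heat-kernel estimates established earlier in the paper: one must control both the weighted moments $\int_\RR\lambda(x)\,\EE|u_\ep(t,x)|^p\,dx$ and the weighted spatial increment (H\"older) seminorm entering the $\cZ_{\lambda,T}^p$ norm. The linear-growth bound \eqref{LinGrowSigam} drives a Gronwall argument for the moments, while the Lipschitz bound \eqref{LipSigam} together with the spatial regularity of $G$ controls the increments; the condition $p>\frac{6}{4H-1}$ is exactly what renders the time singularities produced by the rough noise integrable, so that the estimate closes. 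Here it is crucial that the chosen weight be compatible with the heat semigroup, i.e.\ that $G_t\ast\lambda\lesssim\lambda$ up to a time-dependent factor, so the nonlocal spatial averaging does not destroy the prescribed decay.

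From this uniform bound I would extract, through a Kolmogorov-type criterion, equicontinuity estimates: for any $\gamma<H-\frac 3p$ and any compact $K\subset[0,T]\times\RR$,
\[
 \EE\,|u_\ep(t,x)-u_\ep(t',x')|^p\lesssim_K |t-t'|^{\gamma p/2}+|x-x'|^{\gamma p}\,,
\]
uniformly in $\ep$. These bounds give tightness of the laws of $\{u_\ep\}$ in $\cC([0,T]\times\RR)$ endowed with uniform convergence on compacts. By Prokhorov's theorem and the Skorokhod representation theorem I pass to a new filtered probability space $(\widetilde\Omega,\widetilde\cF,\widetilde\bP,\widetilde\cF_t)$ carrying a subsequence $u_{\ep_k}$ that converges almost surely in $\cC([0,T]\times\RR)$ to a process $u$, together with noises $\widetilde W_{\ep_k}$ converging to a field $\widetilde W$ with the same law as $W$.

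It remains to identify $u$ as a weak solution by passing to the limit in the mild formulation, and I expect this to be the main obstacle. The deterministic term $G_t\ast u_0$ passes trivially, but the stochastic integral $\int_0^t\int_\RR G_{t-s}(x-y)\,\sigma(s,y,u_{\ep_k}(s,y))\,\widetilde W_{\ep_k}(ds,dy)$ is defined only through the spatial-increment norm of its integrand, so its convergence cannot be deduced from pointwise convergence of $\sigma(s,y,u_{\ep_k})$ alone. The uniform $\cZ_{\lambda,T}^p$ bound is precisely the quantity that controls those increments uniformly in $k$; combined with the almost sure convergence of $u_{\ep_k}$ it lets one show that the integrands converge in the relevant weighted fractional-Sobolev norm and hence that the stochastic integrals converge to the integral against $\widetilde W$. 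Once the mild equation is verified for $(u,\widetilde W)$, the sample-path continuity follows from the construction, and the increment estimates above yield, via Kolmogorov's continuity theorem, H\"older continuity on compacts with exponent $\gamma/2$ in time and $\gamma$ in space for every $\gamma<H-\frac 3p$, completing the proof.
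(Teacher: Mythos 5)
Your overall scheme coincides with the paper's: mollify the noise in space (\eqref{Regu}, \eqref{MildSolRegu}), prove a bound on $\|u_\ep\|_{\cZ_{\lambda,T}^p}$ uniform in $\ep$ (Lemma \ref{UniBExist}), obtain weighted time/space increment estimates from the stochastic-convolution bounds of Proposition \ref{CharProp} (Lemma \ref{TimeSpaceRegBdd}), deduce tightness in $\cC([0,T]\times\RR)$ via the modulus-of-continuity criterion (Theorem \ref{TightMoment}), apply Skorokhod, check the limit stays in $\cZ_{\lambda,T}^p$ (Lemma \ref{ConveInC}), and get the H\"older exponents $\gamma<H-\frac 3p$, $\gamma/2$ from Kolmogorov. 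Up to this point your proposal is the paper's proof.

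There is, however, a genuine gap in your identification step. After Skorokhod you write the regularized mild equation for the representing processes, i.e. you take for granted that $\widetilde u_{\ep_k}(t,x)=G_t\ast u_0(x)+\int_0^t\int_\RR G_{t-s}(x-\cdot)\sigma(s,\cdot,\widetilde u_{\ep_k}(s,\cdot))\ast G_{\ep_k}(y)\,\widetilde W(ds,dy)$ holds on the new probability space, and then concentrate on passing to the limit in the stochastic integral. But this identity does not transfer automatically: Skorokhod only gives equality of (finite-dimensional) laws of $(\widetilde u_{\ep_k},\widetilde W)$ and $(u_{\ep_k},W)$, while ``being a mild solution'' involves stochastic integrals, which are $L^2(\Omega)$-limits defined on the original space, not pathwise (hence not manifestly law-measurable) functionals of the pair. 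One must prove that the equation holds for the representing pair. The paper does exactly this: it defines $\widetilde\cF_t$ as the filtration of $\widetilde W$ and verifies \eqref{WeakSHE} by testing both sides against a dense family $Z=f(\widetilde W_{t_1}(\phi),\dots,\widetilde W_{t_n}(\phi))$ in $L^2(\widetilde\Omega,\widetilde\bP,\widetilde\cF_T)$, reducing the identity to equalities of expectations that follow from the coincidence of finite-dimensional distributions; only then does it pass to the limit $j\to\infty$ using \eqref{Skorohod} and the uniform bounds. (A side effect of the paper's formulation is that it never needs your claim that the noises $\widetilde W_{\ep_k}$ themselves converge, which would require choosing a Polish state space for the noise and including it in the tightness argument.) Without this transfer step, or an equivalent martingale-problem argument, your proof does not close; with it inserted, the rest of your outline is sound and matches the paper.
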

From Theorem  \ref{LUEESup} we see that when $\si(0)\not=0$ we  expect
that the solution will not be  in the   space $\cZ_T^p$. We 
enlarge it to the weighted space $\cZ_{\lambda,T}^p$ in the above theorem.
As we said earlier that 
the introduction of the weight $\lambda$ makes the computations
in \cite{HHLNT2017} no longer applicable.  For example, now we need to control, roughly speaking,   a certain norm  of  $
\lambda(\cdot) \int_0^*\int_{\RR} G_{*-s}(\cdot-y)  u(s,y) dW(s,y)
$ and its fractional derivative (with respect to spatial variable)
 by the similar  norm of $\lambda(\cdot) u(*, \cdot)$ and its fractional derivative. 
 This would require us to study the delicate properties of   
$
 \lambda(x)   G_t(x-y)  \lambda^{-1} (y) 
$. 
Thus,   we need some very subtle and  very sharp bounds on the heat kernel $G_t(x-y)$  with respect to the weight
 function   $\lambda(x)$,
 which are  of interest in their  own.  This is done in Section \ref{lemmas}.  After these preparations, we shall show the above theorem  in Section \ref{weak}.  Although the techniques of \cite{HHLNT2017}  {are} no longer effective
in our new  situation  we still follow    the {same}  spirit there. 

It is always interesting   to have    existence and uniqueness of the  strong solution. {  As we said earlier,}  due to the roughness of the noise   we need to handle, as in \cite{HHLNT2017},    the square  increment  $|\sigma(u_1)-\sigma(u_2)-\sigma(v_1)+\sigma(v_2)|$.    It seems too complicated   for  the weighted space.        So,  to show the   existence and uniqueness of strong solution we assume that the derivative of the diffusion coefficient in \eqref{SHE}  possesses  a decay itself   as  $x\rightarrow \infty$.  More precisely, we   make the following assumptions.
\begin{enumerate}
  \item[\textbf{(H2)}]   Assume  that   $\sigma(t,x,u)\in\cC^{0,1,1}([0, T]\times\RR^2)$ satisfies the following conditions:
  $|\sigma'_u(t,x,u)|$ and $|\sigma''_{xu}(t,x, u)|$ are uniformly bounded, i.e. there is a constant  $C>0$ such that
  \begin{align}
  \sup_{t\in[0,T],x\in\RR,u\in\RR} |\sigma'_u(t,x, u)| &\leq C\,; \label{DuSigam}\\
  \sup_{t\in[0,T],x\in\RR,u\in\RR} |\sigma''_{xu}(t,x, u)| &\leq C   \,. \label{DxuSigam}
  \end{align}
Moreover, we assume {  that for some $p>\frac{6}{4H-1}$,} 
  \begin{equation}\label{DuSigamAdd}
   \sup_{t\in[0,T],x\in\RR}\lambda^{-\frac{1}{p}}(x)\left| \sigma'_u(t,x,u_1)-\sigma'_u(t,x, u_2)\right| \le C  |u_1-u_2| \,.
  \end{equation}

\end{enumerate}


\begin{theorem}\label{ModUniq}
Let  $\sigma$ satisfy the above  hypothesis  \textbf{(H2)} and assume that for some $p>\frac{6}{4H-1}$,    $u_0\in\cZ_{\lambda,0}^p$. Then \eqref{SHE} has a unique strong solution with sample paths   in $\cC([0, T]\times\RR)$ almost surely. Moreover, the process $u(\cdot,\cdot)$ is uniformly H\"{o}lder continuous almost surely  on any compact subset  of  $[0, T]\times\RR$ with the same temporal and spatial H\"{o}lder exponents as those  in Theorem \ref{WeakSol}.
\end{theorem}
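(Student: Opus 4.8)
The plan is to deduce the theorem from two ingredients: the existence of a weak solution with continuous sample paths, already supplied by Theorem \ref{WeakSol}, together with a \emph{pathwise uniqueness} statement in the weighted space $\cZ_{\lambda,T}^p$. Once pathwise uniqueness is in hand, a Yamada--Watanabe type argument upgrades weak existence to the existence of a (pathwise) unique strong solution, and the H\"older regularity is inherited verbatim from Theorem \ref{WeakSol} because the strong solution is then a version of the weak one. Thus essentially the whole difficulty is concentrated in proving pathwise uniqueness.

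To this end I would fix two mild solutions $u$ and $v$ driven by the same noise $W$ and with the same initial datum $u_0\in\cZ_{\lambda,0}^p$, and set $w=u-v$. Subtracting the two mild formulations \eqref{MildSol} gives
\[
w(t,x)=\int_0^t\int_\RR G_{t-s}(x-y)\,\big[\sigma(s,y,u(s,y))-\sigma(s,y,v(s,y))\big]\,W(ds,dy),
\]
so $w$ solves a linear-looking equation whose coefficient is the difference of the diffusion coefficients along the two solutions. The goal is to bound a suitable $\cZ_{\lambda,t}^p$ norm of $w$ by a time integral of the same norm on $[0,s]$ and then close by Gronwall. Because we are forced into $\cZ_{\lambda,T}^p$, this norm measures $\lambda(x)w(t,x)$ together with a fractional spatial derivative of order $\tfrac12-H$, and the heat-kernel-against-weight estimates of Section \ref{lemmas} (controlling $\lambda(x)G_{t-s}(x-y)\lambda^{-1}(y)$ and its increments) are exactly what transfers the weight through the stochastic convolution.

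The heart of the matter, and the main obstacle, is the moment estimate for this stochastic integral. Since the noise is white in time and rough in space with $H\in(\tfrac14,\tfrac12)$, its second moment is governed by the $\HH$-norm, i.e.\ by a Gagliardo seminorm of exponent $2-2H$ in the spatial variable. Expanding this seminorm forces control of the spatial increment of the product $G_{t-s}(x-y)[\sigma(u)-\sigma(v)](s,y)$, hence of the double increment $\sigma(s,y,u_1)-\sigma(s,y,v_1)-\sigma(s,y',u_2)+\sigma(s,y',v_2)$ with $u_1=u(s,y)$, $u_2=u(s,y')$, $v_1=v(s,y)$, $v_2=v(s,y')$. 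In the affine case this increment is linear in $w$ and harmless; for general $\sigma$ it is precisely the quantity that cannot be dominated by $|u_1-u_2-v_1+v_2|$. I would tame it by a first-order Taylor expansion, writing the differences through $\sigma'_u$ and $\sigma''_{xu}$, so that the double increment splits into a term carrying $w(s,y)-w(s,y')$ (the increment of $w$, absorbed by the fractional derivative in the $\cZ_{\lambda,t}^p$ norm), a term carrying $\sigma'_u(\cdots,u_1)-\sigma'_u(\cdots,u_2)$ against $w(s,y)$, and mixed terms. The uniform bounds \eqref{DuSigam}--\eqref{DxuSigam} control the smooth factors, while the weighted Lipschitz bound \eqref{DuSigamAdd}---with its $\lambda^{-1/p}(x)$---is exactly the ingredient that lets the extra $\sigma'_u$-increment be reabsorbed into the weighted norm of $w$ rather than blowing up as $x\to\infty$. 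This is why the stronger hypothesis \textbf{(H2)} is imposed for uniqueness.

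Finally, assembling these bounds and taking $L^p(\Omega)$ norms (via the Burkholder--Davis--Gundy inequality in the rough/Gaussian setting, with the constraint $p>\tfrac{6}{4H-1}$ ensuring that all the kernel time-singularities remain integrable), I expect an inequality of the shape
\[
\|w\|_{\cZ_{\lambda,t}^p}^p\ls\int_0^t (t-s)^{-\kappa}\,\|w\|_{\cZ_{\lambda,s}^p}^p\,ds
\]
for some $\kappa<1$, with implied constant depending only on the bounds in \textbf{(H2)}. A singular (Fubini-type) Gronwall argument then yields $\|w\|_{\cZ_{\lambda,t}^p}=0$ for all $t\le T$, hence $u=v$ almost surely, which is pathwise uniqueness. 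Combined with the weak existence of Theorem \ref{WeakSol} this produces the unique strong solution, and its sample-path continuity together with the stated temporal and spatial H\"older exponents are precisely those already established in Theorem \ref{WeakSol}.
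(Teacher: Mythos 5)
Your overall architecture (weak existence from Theorem \ref{WeakSol}, pathwise uniqueness, then a Yamada--Watanabe argument) matches the paper, and you correctly identify both the Taylor expansion of the double increment of $\sigma$ and the special role of the weighted Lipschitz hypothesis \eqref{DuSigamAdd}. However, there is a genuine gap at the decisive step: your claimed Gronwall inequality ``with implied constant depending only on the bounds in \textbf{(H2)}'' is not obtainable. Writing $\triangle(s,x,y)=\sigma(s,x,u(s,y))-\sigma(s,x,v(s,y))$ and expanding $\triangle(s,y,y+h)-\triangle(s,y,y)$ exactly as you propose, the term carrying $\sigma'_u(\cdot,\cdot,u_1)-\sigma'_u(\cdot,\cdot,u_2)$ against $w(s,y)=u(s,y)-v(s,y)$ produces, after squaring and integrating against $|h|^{2H-2}dh$, a quantity of the form
\[
\lambda^{2/p}(y)\,|w(s,y)|^2\left[\cN_{\frac 12-H}u(s,y)^2+\cN_{\frac 12-H}v(s,y)^2\right],
\]
i.e.\ a product of $|w|^2$ with the \emph{random, unbounded} fractional derivative of the solutions themselves. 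Its expectation cannot be dominated by a deterministic constant times $\EE|w(s,y)|^2$ (nor by the $\cZ^p_{\lambda,s}$ norm of $w$): H\"older's inequality only trades it for higher moments of $w$, which leads to an infinite regress, and nothing in \textbf{(H2)} bounds $\cN_{\frac 12-H}u$ pointwise --- indeed Proposition \ref{Prop.SupN} shows this quantity is genuinely large even for the additive solution. So the Gronwall loop, as you state it, does not close.

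The missing idea --- and the paper's fix --- is localization by stopping times. The paper defines $T_k$ as the first time $\sup_{0\le s\le t,\,x\in\RR}\lambda^{2/p}(x)\cN_{\frac 12-H}u(s,x)$ (or the same quantity for $v$) reaches level $k$; Proposition \ref{CharProp} part \textbf{(ii)} guarantees these suprema are almost surely finite, hence $T_k\uparrow T$. On $\{s<T_k\}$ the offending random factor is bounded by the constant $k$, and working with the localized, sup-over-$x$ second moments
$I_1(t)=\sup_x\EE[\1_{\{t<T_k\}}|w(t,x)|^2]$ and the analogous fractional-increment quantity $I_2(t)$ (note: the paper uses these pointwise second moments, not the full $\cZ^p_{\lambda,t}$ norm you propose), one obtains
\[
I_1(t)+I_2(t)\lesssim k\int_0^t (t-s)^{2H-\frac 32}\left[I_1(s)+I_2(s)\right]ds,
\]
whence $u=v$ on $\{t<T_k\}$ by Gronwall, for each fixed $k$; letting $k\to\infty$ finishes the argument. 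The Gronwall constant is thus proportional to $k$ and is \emph{not} uniform, which is precisely why the localization cannot be dispensed with. Without this step (or some substitute a priori almost-sure bound on the fractional derivative of the solutions), your proof cannot be completed as written.
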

This theorem will be proved in Section \ref{strong}.  Let us  point out that if  $\sigma(u)$ is affine,  then it satisfies the assumption \textbf{(H2)}.


\section{Auxiliary Lemmas}\label{lemmas}
 In this section, we shall obtain some estimates  about  the heat kernel   $G_t(x)=\frac{1}{\sqrt{ 4\pi t}  }e^{-\frac{|x-y|^2}{4t}}$  associated with the Laplacian $\Delta$ combined  with the decay  weight $\lambda(x)$. These estimates are the key ingredients  to establish our results.

\subsection{Covariance structure}
We start  by recalling  some notations used in \cite{HHLNT2017}.    Denote by  $ \cD=\cD(\RR) $ the space of smooth functions
on $\RR$  with compact support, and by $\cD'$  the   dual of $\cD$ with respect to the $L^2(\RR, dx)$. The Fourier transform of a function $f\in\cD$ is defined as
 \[
\hat f(\xi)=  \cF f(\xi)=\int_{\RR} e^{-i\xi x}f(x) dx,
 \]
 and the inverse Fourier transform is  then  given by $\cF^{-1}g(x)=\frac{1}{2\pi}\cF g(-x)$.

 Let   $(\Omega,\cF,\bP)$ be a complete probability space
 and let  $H\in (\frac14 , \frac 12)$ be given and fixed.
Our noise $\dot W$ is  a zero-mean Gaussian family $\{W(\phi), \phi\in\cD(\RR_{+}\times \RR)\}$  with covariance structure given by
 \begin{equation}\label{CovStru}
   \EE\lk W(\phi)W(\psi)\rk=c_{1,H}\int_{\RR_{+}\times \RR} \cF \phi(s,\xi) \overline{\cF \psi(s,\xi)} |\xi|^{1-2H}d\xi ds,
 \end{equation}
 where $c_{1,H} $  is given below by \eqref{e.c1}
 and $\cF \phi(s,\xi)$ is the Fourier transform with respect to the spatial variable $x$ of the function $\phi(s,x)$. Let $\cF_t$ be the filtration generated by $W$.  This  means
\[
 \cF_t=\sigma\{W(\phi(x)\1_{[0,r]}(s)):r\in[0,t],\phi(x)\in \cD(\RR)\}.
\]
Equation \eqref{CovStru}  defines   a Hilbert scalar product on $\cD(\RR_{+}\times \RR) $.  To express this product without the use of Fourier   transform, we recall   the Marchaud fractional derivative $D_{-}^{\beta}$ of order $\beta\in(0,1)$. For a function $\phi: \RR_{+}\times \RR\rightarrow \RR$, the Marchaud fractional derivative  $D_{-}^{\beta}$ is defined as:
 \begin{equation}\label{MFDer}
   D_{-}^{\beta} \phi(t,x)=\lim_{\ep\downarrow 0}D_{-,\ep}^{\beta} \phi(t,x)=\lim_{\ep\downarrow 0}\frac{\beta}{\Gamma(1-\beta)}\int_{\ep}^{\infty}\frac{\phi(t,x)-\phi(t,x+y)}{y^{1+\beta}} dy.
 \end{equation}
 We also define the Riemann-Liouville fractional integral of order $\beta$ of a function $\phi$  by 
 \begin{equation*}
   I^{\beta}_{-} \phi(t,x)=\frac{1}{\Gamma(\beta)}\int_{x}^{\infty} \phi(t,y)(y-x)^{\beta-1}dy.
 \end{equation*}
%
%
Set
 \begin{equation}\label{hSpace}
   \HH=\{\phi:\RR_{+}\times \RR\to \RR~ \,; \ ~\exists \psi\in L^2(\RR_{+}\times \RR)~s.t.~ \phi(t,x)=I^{\frac 12-H}_{-} \psi(t,x)\}.
 \end{equation}
 With these  notations we can express  the Hilbert space obtained by completing $\cD(\RR_{+}\times \RR) $ with respect to the scalar  product given by \eqref{CovStru} in
the following proposition
(see e.g.
 \cite{PT2000} for a proof).

 \begin{proposition}\label{hSpaceProp}
   The function space $\HH$ is a Hilbert space equipped with the scalar  product
\begin{align}
     \langle\phi,\psi\rangle_{\HH}
     =&c_{1,H}\int_{\RR_{+}\times \RR} \cF \phi(s,\xi) \overline{\cF \psi(s,\xi)} |\xi|^{1-2H}d\xi ds \label{hinner.1}
     \\
     =&c_{2,H}\int_{\RR_{+}\times \RR} D_{-}^{\frac 12 -H} \phi(t,x)D_{-}^{\frac 12 -H} \psi(t,x)dxdt\label{hinner.2}\\
     =& {c_{3,H}  \int_{\RR_{+}}\int_{\RR^2}[\phi(t,x+y)-\phi(t,x)][\psi(t,x+y)-\psi(t,x)] |y|^{2H-2} dxdydt}\,,\label{hinner.3}
   \end{align}
   where
   \begin{align}
   c_{1,H}=&\frac{1}{2\pi}\Gamma(2H+1)\sin(\pi H)\,;   \label{e.c1}\\
   c_{2,H}=&\lk\Gamma\Blc H+\frac 12\Brc\rk^2\left(\int_{0}^{\infty} \Blk(1+t)^{H-\frac 12}-t^{H-\frac 12}\Brk^2 dt+\frac{1}{2H}\right)^{-1}\,;\\
    c_{3,H}   =&\sqrt{H (\frac 12-H) }\, c_{2,H}^{-1/2}\,.  \label{e.c3}
   \end{align}
 The space  $\cD(\RR_{+}\times \RR)$ is dense in $\HH$.
 \end{proposition}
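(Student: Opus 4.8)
The plan is to reduce all three quadratic forms \eqref{hinner.1}--\eqref{hinner.3} to one and the same weighted $L^2$ integral on the Fourier side, and then to transport the Hilbert structure of $L^2$ to $\HH$ through the fractional operators $D_-^{1/2-H}$ and $I_-^{1/2-H}$. First I would record the Fourier symbols of the operators in play. Starting from the singular-integral definition \eqref{MFDer}, the identity $\cF[\phi(\cdot+y)](\xi)=e^{iy\xi}\hat\phi(\xi)$ together with the elementary evaluation $\int_0^\infty y^{-1-\beta}(1-e^{iy\xi})\,dy=\Gamma(-\beta)(-i\xi)^\beta$ (valid for $0<\beta<1$) yields $\cF[D_-^\beta\phi](\xi)=(-i\xi)^\beta\hat\phi(\xi)$ for $\phi\in\cD$, so that $|\cF[D_-^{1/2-H}\phi](\xi)|=|\xi|^{1/2-H}|\hat\phi(\xi)|$; here $\tfrac12-H\in(0,\tfrac14)$ guarantees convergence. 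Dually $\cF[I_-^{1/2-H}\psi](\xi)=(-i\xi)^{-(1/2-H)}\hat\psi(\xi)$, so $D_-^{1/2-H}$ and $I_-^{1/2-H}$ invert one another at the symbol level.

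Next I would prove the three forms coincide, first for $\phi,\psi\in\cD$ and then extend to $\HH$ by polarization (the inner products are real). With $\beta=\tfrac12-H$, Plancherel's identity turns \eqref{hinner.2} into $\frac{c_{2,H}}{2\pi}\int_{\RR_{+}\times\RR}|\xi|^{1-2H}\cF\phi\,\overline{\cF\psi}\,d\xi\,ds$, which is exactly \eqref{hinner.1} once the constant is checked. For \eqref{hinner.3} I would write $\cF[\phi(\cdot+y)-\phi(\cdot)](\xi)=(e^{iy\xi}-1)\hat\phi(\xi)$, integrate first in $x$ by Plancherel and then in $y$ against $|y|^{2H-2}$, using $\int_\RR(1-\cos(y\xi))|y|^{2H-2}\,dy=K_H|\xi|^{1-2H}$ where $K_H$ comes from the classical formula $\int_0^\infty z^{s-1}(1-\cos z)\,dz=-\Gamma(s)\cos(\pi s/2)$ evaluated at $s=2H-1$; matching against \eqref{hinner.1} then pins down $c_{3,H}$. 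The reconciliation of the closed form \eqref{e.c1} of $c_{1,H}$ with the integral form of $c_{2,H}$ and with \eqref{e.c3} is a Beta/Gamma-function bookkeeping exercise.

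For the Hilbert structure and density I would use the isometry built above. The representation $\langle\phi,\psi\rangle_\HH=c\int|\xi|^{1-2H}\cF\phi\,\overline{\cF\psi}$ shows that $\phi\mapsto\sqrt{c_{2,H}}\,D_-^{1/2-H}\phi$ is an isometry of $(\HH,\langle\cdot,\cdot\rangle_\HH)$ onto $L^2(\RR_{+}\times\RR)$, with inverse $I_-^{1/2-H}$. Since $\HH$ is, by its very definition \eqref{hSpace}, the image of $L^2$ under $I_-^{1/2-H}$, it inherits completeness from $L^2$ and is therefore a Hilbert space. To see that $\cD$ is dense, I would push forward a dense family: as $\cD$ is dense in $L^2$, the functions $I_-^{1/2-H}\psi$ with $\psi\in\cD$ are dense in $\HH$, and it remains to approximate each such $I_-^{1/2-H}\psi$ in $\HH$-norm by genuinely compactly supported smooth functions.

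The hard part will be exactly this last density step, together with making sure the normalizations are mutually consistent. The operator $I_-^{1/2-H}$ does not preserve compact support (the Riemann--Liouville integral of a $\cD$-function has a polynomially decaying tail), so one cannot remain inside $\cD$ for free; I would handle this by truncation-and-mollification, the delicate point being to control the homogeneous weighted norm $\int|\xi|^{1-2H}|\cF(\cdot)|^2\,d\xi$ under cutoffs, where the vanishing low-frequency weight $|\xi|^{1-2H}\to0$ works in our favour but the slowly decaying tails created by truncation must be absorbed carefully. The secondary obstacle is purely computational: verifying that the three constants $c_{1,H}$, $c_{2,H}$, $c_{3,H}$ give literally the same form, which reduces to standard Gamma-function evaluations and is the kind of detail for which one may simply cite \cite{PT2000}.
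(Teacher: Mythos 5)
The paper offers no internal proof to compare against: Proposition \ref{hSpaceProp} is stated with only a pointer to \cite{PT2000}, so your reconstruction is necessarily a ``different route,'' and in fact it is the standard argument from that cited literature. Your outline is sound: computing the Fourier symbol of the Marchaud derivative, reducing all three forms \eqref{hinner.1}--\eqref{hinner.3} to the single weighted integral $c\int_{\RR_+\times\RR}|\xi|^{1-2H}\cF\phi\,\overline{\cF\psi}\,d\xi\,ds$ via Plancherel and the classical evaluation of $\int_\RR (1-\cos(y\xi))|y|^{2H-2}dy$, and then transporting completeness from $L^2$ through the isometry $\phi\mapsto D_-^{1/2-H}\phi$. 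Three remarks. First, a sign slip: the correct evaluation is $\int_0^\infty y^{-1-\beta}(1-e^{iy\xi})\,dy=-\Gamma(-\beta)(-i\xi)^\beta=\tfrac{\Gamma(1-\beta)}{\beta}(-i\xi)^\beta$; with the constant as you wrote it, your intermediate formula would give the symbol $-(-i\xi)^\beta$, contradicting your stated conclusion. This is harmless for the proposition, since all three forms involve only $(-i\xi)^\beta\overline{(-i\xi)^\beta}=|\xi|^{2\beta}$, but it should be fixed. Second, the Riemann--Liouville integral $I_-^{1/2-H}\psi$ is not an absolutely convergent integral for arbitrary $\psi\in L^2$ (the kernel $(y-x)^{\beta-1}$ with $\beta-1\in(-1,-\tfrac34)$ fails to be locally square-integrable), so the assertion that $I_-^{1/2-H}$ inverts $D_-^{1/2-H}$ on all of $L^2$ must be interpreted on the Fourier side, or established on a dense class and extended; this is precisely the kind of care \cite{PT2000} takes. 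Third, the density step you flag as the hard part is real but classical, and there is a shortcut you can take instead of hands-on truncation-and-mollification: your Fourier reduction identifies $\HH$ with $L^2(\RR_+;\dot{\cH}^{1/2-H}(\RR))$, and $\cD$ is dense in the homogeneous Sobolev space $\dot{\cH}^s(\RR)$ exactly in the regime $s<\tfrac12$, which holds here since $\tfrac12-H\in(0,\tfrac14)$; this is the identification the paper itself makes immediately after the proposition, with reference to \cite{BCD}, and tensorizing with density of test functions in $L^2(\RR_+)$ finishes the claim. With these repairs and citations your proposal closes correctly.
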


 The Gaussian  space $\HH$ is the same as  the homogeneous Sobolev space $\dot{\cH}^{\beta}$ for $\beta=\frac12-H\in(0,\frac 12)$ in harmonic analysis (\cite{BCD}).
The Gaussian family $W=\{W(\phi),~\phi\in\cD(\RR_{+}\times \RR)\}$  can be extended to an isonormal Gaussian process $W=\{W(\phi),~\phi\in\HH\}$ indexed by the Hilbert space $\HH$. It is easy to see that $\phi(t,x)=\chi_{\{[0, t]\times[0, x]\}}$, $t\in \RR_+$ and $x\in \RR$,  is in $\HH$
(we set $\chi_{\{[0, t]\times[0, x]\}}=-\chi_{\{[0, t]\times[x, 0]\}}$
if $x$ is negative).
We denote $W(t,x)=W(\chi_{\{[0, t]\times[0, x]\}})$.

 \subsection{Stochastic integration}
We first define stochastic integral for elementary integrands and then extend it to general ones.

 \begin{definition}\label{ElemP}
An elementary process $g$ is a process of the following form
   \[
    g(t,x)=\sum_{i=1}^{n}\sum_{j=1}^{m} X_{i,j}\1_{(a_i,b_i]}(t)\1_{(h_j,l_j]}(x),
   \]
   where $n$ and $m$ are finite positive integers, $-\infty<a_1<b_1<\cdots<a_n<b_n<\infty$, $h_j<l_j$ and $X_{i,j}$ are $\cF_{a_i}$-measurable random variables for $i=1,\dots,n$. The stochastic integral of such an elementary  process with respect to $W$ is defined as
   \begin{equation}\label{SI_Ele}
     \begin{split}
         &\int_{\RR_{+}}\int_{\RR} g(t,x)W(dt,dx)
     = \sum_{i=1}^{n}\sum_{j=1}^{m}  X_{i,j}W(\1_{(a_i,b_i]}\otimes \1_{(h_j,l_j]}) \\
     &\qquad\qquad =\sum_{i=1}^{n}\sum_{j=1}^{m}  X_{i,j}\big[W(b_i,l_j)-W(a_i,l_j)-W(b_i,h_j)+W(a_i,h_j)\big].
     \end{split}
   \end{equation}
 \end{definition}
%
 \begin{proposition}\label{SI_Gene}
   Let $\Lambda_{H}$ be the space of predictable processes $g$ defined on $\RR_{+}\times\RR$ such that almost surely $g\in\HH$ and $\EE[\|g\|_{\HH}^2]<\infty$. Then, the space of elementary processes
    defined in Definition \ref{ElemP} is dense in $\Lambda_{H}$.
  \end{proposition}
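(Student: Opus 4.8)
The plan is to realize $\Lambda_H$ as a closed subspace of the Hilbert space $L^2(\Omega;\HH)$ and to show that the closure of the linear span $\mathcal{E}$ of the elementary processes of Definition \ref{ElemP} exhausts it. Since $\HH$ is a genuine (separable) Hilbert space by Proposition \ref{hSpaceProp}, and since predictability is preserved under $L^2(\Omega;\HH)$-limits (so $\Lambda_H$ is indeed closed), it suffices to approximate an arbitrary $g\in\Lambda_H$ in the norm $\big(\EE\|g\|_\HH^2\big)^{1/2}$ by members of $\mathcal{E}$. I would carry this out by a two-stage regularize-then-discretize scheme, reducing first to smooth predictable integrands and then to step processes.

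First I would reduce to smooth, compactly supported integrands. Using the representation \eqref{hinner.3}, the $\HH$-norm is the $L^2$-norm in time of the Gagliardo seminorm of order $s:=\tfrac12-H\in(0,\tfrac14)$ in space. Given $g\in\Lambda_H$, I would mollify $g(\omega,\cdot,\cdot)$ separately in time and space against smooth probability kernels, taking the time kernel $\rho_\ep$ supported on $[0,\ep]$ and setting $g_\ep(\cdot,t,\cdot)=\int_0^\ep g(\cdot,t-r,\cdot)\rho_\ep(r)\,dr$, so that $g_\ep(\cdot,t,\cdot)$ depends only on past values and adaptedness—hence predictability—is preserved; spatial mollification raises no measurability issue. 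In Fourier variables each such convolution multiplies by a symbol bounded by $1$, so mollification is a contraction on $\HH$ and converges to the identity there; combined with $\EE\|g\|_\HH^2<\infty$ and dominated convergence, the mollified processes converge to $g$ in $L^2(\Omega;\HH)$. A further spatial truncation then reduces matters to integrands that are, for each $\omega$, smooth and compactly supported in $(t,x)$.

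Next I would discretize such a smooth predictable process. Fixing a fine grid $\{(t_i,x_j)\}$ and replacing $g$ by the step process taking the value $g(\omega,t_i,x_j)$ on each cell $(t_i,t_{i+1}]\times(x_j,x_{j+1}]$ produces an element of $\mathcal{E}$: the coefficient $g(\omega,t_i,x_j)$ is $\cF_{t_i}$-measurable by predictability (this is why the left time-endpoint is used), and the indicator tensors $\1_{(t_i,t_{i+1}]}(t)\1_{(x_j,x_{j+1}]}(x)$ are exactly the elementary building blocks. Because $g$ is smooth and compactly supported, the step approximation converges to $g$ in the $L^2$-in-time, $\dot{\cH}^s$-in-space norm as the mesh tends to zero; here it is essential that $s<1/2$, so that step functions belong to $\HH$ and approximate smooth functions in the Gagliardo seminorm. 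Uniform control via $\EE\|g\|_\HH^2$ then upgrades the pointwise-in-$\omega$ convergence to convergence in $L^2(\Omega;\HH)$, completing the approximation.

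The main obstacle I anticipate lies in the second stage: establishing convergence of the step approximation in the fractional spatial seminorm $\dot{\cH}^{1/2-H}$ rather than in an ordinary $L^2$ or $H^1$ norm. One must quantify, using the kernel $|y|^{2H-2}$ in \eqref{hinner.3} together with the smoothness of $g$, that the jump-type errors introduced by discretization are negligible in this fractional norm, and one must do so uniformly in the mesh so that dominated convergence applies over $\Omega$. The exponent range $s\in(0,\tfrac14)$ forced by $H\in(\tfrac14,\tfrac12)$ is favorable and keeps step functions comfortably inside $\HH$, but the interplay between this fractional convergence in space and the requirement that the causal time-discretization preserve adaptedness is the delicate bookkeeping at the heart of the argument.
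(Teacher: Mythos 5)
The paper does not actually prove this proposition: it is stated as a known fact, imported from \cite{HHLNT2017} and the standard theory of Walsh-type stochastic integration, so there is no internal proof to measure your argument against; I can only assess it on its own terms, and it is essentially correct. Your regularize-then-discretize scheme is the standard route, and you have isolated the two points on which everything turns: the causal one-sided time mollification, which preserves predictability while acting as a contraction on $\HH$ (Young's inequality in the time variable, Plancherel in space), and the inequality $s=\frac12-H<\frac12$, which is what makes indicators of intervals elements of $\dot{\cH}^{s}$ and makes left-endpoint piecewise-constant interpolants of smooth, compactly supported functions converge in the Gagliardo seminorm. The convergence you defer as "the main obstacle" does go through: splitting the double integral against the kernel $|y|^{2H-2}$ into near-diagonal pairs inside one cell, near-diagonal pairs straddling a grid point, and far pairs, each contribution is $O(h^{2-2s})$ for mesh size $h$, so the discretization error is $O(h^{1-s})$.

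A few points should be tightened, though none is a genuine gap. First, pathwise mollification presupposes $g(\omega,t,\cdot)\in L^1_{\mathrm{loc}}$, which follows from the Sobolev embedding $\dot{\cH}^{s}(\RR)\hookrightarrow L^{1/H}(\RR)$ and is worth stating, since elements of $\HH$ are a priori only fractional integrals of $L^2$ functions. Second, the spatial truncation step silently uses that multiplication by $\chi(\cdot/R)$ is bounded on $\dot{\cH}^{s}$ uniformly in $R$ (true for $0<s<\frac12$ by scaling, once the fixed-cutoff multiplier bound is known) together with the density of $C_c^\infty$ in $\dot{\cH}^{s}$; you also need a truncation in time so that the grid has finitely many cells, which is harmless because the $\HH$-norm is plain $L^2$ in time. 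Third, in the final stage the dominating function for dominated convergence over $\Omega$ necessarily involves the $C^1$-norm of the mollified path and therefore blows up as $\ep\downarrow 0$; this is fine provided the limits are taken sequentially (fix the mollification and truncation parameters, send the mesh to zero, then diagonalize), but your phrase "uniformly in the mesh" should be read in exactly that order of quantifiers — the uniformity is only available at fixed $\ep$, not jointly.
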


\begin{definition}\label{Def_SI_Gene} For $g\in\Lambda_{H}$, the stochastic integral $\int_{\RR_{+}\times\RR} g(t,x)W(dt,dx)$ is defined as the $L^2(\Omega)$  limit of stochastic integrals of the  elementary processes approximating $g(t,x)$
in $\Lambda_H$, and we have the following isometry equality
    \begin{equation}\label{Isometry}
     \EE\lk\lc\int_{\RR_{+}\times\RR} g(t,x)W(dt,dx)\rc^2\rk=\EE\lk\|g\|_{\HH}^2\rk.
    \end{equation}
    \end{definition}



\subsection{Auxiliary Lemmas}
We shall find a  solution to   equation
\eqref{SHE} in the space $\cZ_{\lambda, T}^p$. To deal with weight $\lambda$  we need   a  few technical results concerning the interaction between  the weight $\lambda(x)$  and the Green's function $G_t(x-y)$.
 \begin{lemma}\label{TechLemma1}
   For any $\lambda \in \RR$, $\lambda(x)=\frac{1}{(1+|x|^2)^{\lambda}}$ and $T>0$, we have
   \begin{equation}\label{Glamd}
     \sup_{0\leq t\leq T}\sup_{x\in\RR} \frac{1}{\lambda(x)}\int_{\RR} G_{t}(x-y) \lambda(y) dy<\infty.
   \end{equation}
 \end{lemma}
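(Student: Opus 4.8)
The plan is to reduce the estimate to a uniform bound on a ratio of polynomial weights and then to a single Gaussian moment. First I would translate the inner integral: substituting $y=x-z$ and using $\int_\RR G_t(z)\,dz=1$, the quantity to control becomes
\begin{equation*}
\frac{1}{\lambda(x)}\int_\RR G_t(x-y)\lambda(y)\,dy
=\int_\RR G_t(z)\left(\frac{1+x^2}{1+(x-z)^2}\right)^{\lambda}dz,
\end{equation*}
where I write $\lambda$ for the exponent in $\lambda(x)=(1+|x|^2)^{-\lambda}$. The whole difficulty is now the dependence on the base point $x$ inside the ratio; the point is that this ratio, once raised to the power $\lambda$, can be dominated by a quantity depending only on the increment $z$.

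To that end I would record the elementary inequality $1+a^2\le 2(1+(a-b)^2)(1+b^2)$, valid for all real $a,b$, which follows at once from $a^2=((a-b)+b)^2\le 2(a-b)^2+2b^2$. Applying it with $(a,b)=(x,z)$ gives $1+x^2\le 2(1+(x-z)^2)(1+z^2)$, while applying it with $(a,b)=(x-z,-z)$ (so that $a-b=x$) gives $1+(x-z)^2\le 2(1+x^2)(1+z^2)$. Depending on the sign of $\lambda$, one of these two bounds yields, in all cases,
\begin{equation*}
\left(\frac{1+x^2}{1+(x-z)^2}\right)^{\lambda}\le 2^{|\lambda|}(1+z^2)^{|\lambda|}.
\end{equation*}
This is the crux, and the bound is completely independent of $x$.

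Substituting this into the displayed identity gives
\begin{equation*}
\frac{1}{\lambda(x)}\int_\RR G_t(x-y)\lambda(y)\,dy\le 2^{|\lambda|}\int_\RR G_t(z)(1+z^2)^{|\lambda|}\,dz,
\end{equation*}
and the right-hand side no longer depends on $x$. Finally I would evaluate this single Gaussian integral by the scaling $z=\sqrt{2t}\,w$, under which $G_t(z)\,dz=(2\pi)^{-1/2}e^{-w^2/2}\,dw$, obtaining $2^{|\lambda|}\int_\RR (2\pi)^{-1/2}e^{-w^2/2}(1+2t\,w^2)^{|\lambda|}\,dw$. Since $(1+2t\,w^2)^{|\lambda|}\le (1+2T\,w^2)^{|\lambda|}$ for $t\le T$ and the Gaussian weight decays faster than any power, this last integral is finite and bounded uniformly in $t\in[0,T]$, which gives \eqref{Glamd}.

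As for the main obstacle: there is no deep difficulty once the correct reduction is found. The only genuine content is recognizing that uniformity in $x$ must be secured \emph{before} integrating in $z$, and that the two-sided polynomial inequality above (a form of Peetre's inequality) is exactly what decouples the base point $x$ from the increment $z$. The remaining steps—the translation and the Gaussian moment computation—are routine.
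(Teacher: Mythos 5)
Your proof is correct and follows essentially the same route as the paper: change variables to isolate the weight ratio, bound that ratio uniformly in $x$ by a polynomial in the increment variable (your quadratic Peetre inequality versus the paper's case-by-case bound $\sup_x\big(\tfrac{1+|x|}{1+|x+y|}\big)^{\pm 2\lambda}\le C_\lambda(1+|y|)^{2|\lambda|}$), and then conclude by the finiteness of Gaussian polynomial moments uniformly over $t\in[0,T]$. The only cosmetic difference is that your inequality handles both signs of $\lambda$ at once, whereas the paper treats $\lambda\ge 0$ and $\lambda<0$ separately.
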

 \begin{remark} To avoid using too many notations we use the symbol
 $\lambda$   for a real number and the function induced. Apparently, there will be no confusion.
 \end{remark}
 \begin{proof}
   Let us rewrite (\ref{Glamd}) as
   \begin{equation*}
     \sup_{0\leq t\leq T}\sup_{x\in\RR} \int_{\RR} G_{t}(y)\frac{\lambda(y+x)}{\lambda(x)}dy\leq \sup_{0\leq t\leq T} \int_{\RR} G_{t}(y)\sup_{x\in\RR}\frac{\lambda(y+x)}{\lambda(x)}dy.
   \end{equation*}
We discuss the cases $\lambda\ge 0$ and $\lambda <0$ separately.
When $\lambda\geq0$, we have
   \[
    \sup_{x\in\RR} \frac{\lambda(y+x)}{\lambda(x)}\leq C_{\lambda} \sup_{x\in\RR} \lc\frac{1+|x|}{1+|x+y|}\rc^{2\lambda}\leq C_{\lambda}(1+|y|)^{2\lambda}\,.
   \]
On the other hand   when $\lambda<0$  we have
   \[
    \sup_{x\in\RR} \lc\frac{1+|x+y|^2}{1+|x|^2}\rc^{-\lambda}\leq C_{\lambda} \sup_{x\in\RR} \lc\frac{1+|x+y|}{1+|x|}\rc^{-2\lambda}\leq C_{\lambda} (1+|y|)^{-2\lambda}\,.
    \]
In both cases we see
\[
 \sup_{0\leq t\leq T} \int_{\RR} G_{t}(y)\sup_{x\in\RR}\frac{\lambda(y+x)}{\lambda(x)}dy\leq C_{\lambda} \sup_{t\in[0,T]}\int_{\RR} G_t(y)(1+|y|)^{2|\lambda|} dy< \infty.
   \]
This  finishes the proof.
 \end{proof}

 \begin{lemma}\label{TechLemma2}
Denote  $J(x):=\int_{0}^{\infty}e^{-{\eta^2}}\eta^{\beta}cos(x\eta)d\eta$,  where  $\beta>-1$.  We have
   \begin{equation}\label{J1bdd}
     |J(x)|\leq C_{\beta} \lc1\wedge \frac{1}{|x|^{\beta+1}}\rc.
   \end{equation}
 \end{lemma}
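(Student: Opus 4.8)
The plan is to prove the two halves of the bound $1\wedge |x|^{-\beta-1}$ separately. The constant bound is immediate: since $|\cos(x\eta)|\le 1$,
\[
|J(x)|\le\int_0^\infty e^{-\eta^2}\eta^\beta\,d\eta=\tfrac12\Gamma\!\left(\tfrac{\beta+1}{2}\right)<\infty,
\]
the finiteness being exactly where the hypothesis $\beta>-1$ enters, and this supplies the factor $1$. Because $\cos$ is even we have $J(x)=J(|x|)$, so it remains to show $|J(x)|\le C_\beta\,x^{-\beta-1}$ for $x\ge 1$; this is the substantive half, and is where the oscillation of $\cos(x\eta)$ must be exploited.

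The mechanism I would use is that an $\eta^\beta$ singularity of the amplitude at the origin forces precisely $|x|^{-\beta-1}$ decay of its cosine transform, the Gaussian factor serving only to guarantee convergence. Concretely, I would split at the natural scale $\eta=1/x$, writing $J(x)=\int_0^{1/x}+\int_{1/x}^\infty$. On the short interval the oscillation is harmless and the crude estimate $\int_0^{1/x}\eta^\beta\,d\eta=\frac{1}{\beta+1}x^{-\beta-1}$ already produces the desired order. The entire difficulty is concentrated in the tail $\int_{1/x}^\infty e^{-\eta^2}\eta^\beta\cos(x\eta)\,d\eta$, where $\eta^\beta$ is no longer singular but the integral converges only through cancellation.

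On the tail I would integrate by parts $N:=\lfloor\beta\rfloor+2$ times, each step trading a factor $1/x$ for one derivative of the amplitude $A(\eta):=e^{-\eta^2}\eta^\beta$ via $\cos(x\eta)=\frac1x\frac{d}{d\eta}\sin(x\eta)$, and so on. The boundary contributions at $\eta=\infty$ vanish by the Gaussian; at $\eta=1/x$ the $m$-th step contributes $x^{-m}$ times $A^{(m-1)}(1/x)$ times a bounded value ($\sin 1$ or $\cos 1$, since $x\eta=1$ there). As $A^{(m-1)}$ is a finite sum of terms $c\,\eta^{\beta-m+1+2j}e^{-\eta^2}$ with leading behaviour $\eta^{\beta-m+1}$ near $1/x$, each boundary term has size $x^{-m}\cdot x^{-(\beta-m+1)}=x^{-\beta-1}$, exactly the target order (higher $j$ give $x^{-\beta-1-2j}$, subdominant). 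For the final remainder I would bound $x^{-N}\int_{1/x}^\infty\bigl|A^{(N)}(\eta)\bigr|\,d\eta$ termwise; the choice $N=\lfloor\beta\rfloor+2$ makes the lowest power $\eta^{\beta-N}$ strictly less than $-1$, so the endpoint at $1/x$ governs the integral and, after the rescaling $\eta\mapsto\eta/x$ (or a second split at $1/x$), this remainder is again $O(x^{-\beta-1})$, the higher-$j$ terms being of the same or smaller order. Summing the $N$ boundary terms and the remainder yields $|J(x)|\le C_\beta\,x^{-\beta-1}$ for $x\ge 1$, which combined with the constant bound gives $|J(x)|\le C_\beta\bigl(1\wedge|x|^{-\beta-1}\bigr)$.

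An equivalent and slightly cleaner packaging, which I would fall back on if the bookkeeping above proves unwieldy, is to substitute $u=x\eta$, giving $J(x)=x^{-\beta-1}\int_0^\infty u^\beta e^{-u^2/x^2}\cos u\,du$; the claim then reduces to the uniform boundedness of $K(x):=\int_0^\infty u^\beta e^{-u^2/x^2}\cos u\,du$ over $x\ge 1$, for which one splits $K=\int_0^1+\int_1^\infty$, estimates the first piece by $\int_0^1 u^\beta\,du$, and controls the second by the same repeated integration by parts, now with the fixed lower endpoint $u=1$ and an $x$-uniform amplitude. Either way, the main obstacle is purely the combinatorial bookkeeping of the repeated integration by parts---verifying that every boundary term and the remainder share the single scale $x^{-\beta-1}$---rather than any analytic subtlety; the conceptual content is just the singularity-order/decay-order correspondence noted above.
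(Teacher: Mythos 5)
Your proposal is correct and follows essentially the same route as the paper: split the integral at the scale $\eta=1/x$ (the paper keeps a generic cut $s(x)$ and only sets $s(x)=1/x$ at the end), bound the short piece crudely by $\int_0^{1/x}\eta^\beta\,d\eta\lesssim x^{-\beta-1}$, and handle the tail by repeated integration by parts (about $\lfloor\beta\rfloor+2$ times, matching the paper's $\lceil\beta\rceil+1$), checking that every boundary term at $\eta=1/x$ and the final remainder are all of order $x^{-\beta-1}$. The bookkeeping you outline, including the powers $\eta^{\beta-m+2j}e^{-\eta^2}$ appearing in the derivatives of the amplitude, reproduces the paper's estimates, so no gap remains.
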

 \begin{proof}
Notice that this is  to estimate the decay rate of the Fourier transform of $e^{-{\eta^2}}\eta^{\beta}$   when $|x|$ is large.  Since $J(-x)=J(x)$  and since we are only concerned with the large $x$ behaviour we may  assume $x\geq 1$.
We split the integral $J(x)$ into two parts:
   \[
    J(x)=  \int_{0}^{s(x)}e^{-{\eta^2}}\eta^{\beta}\cos(x\eta)d\eta+\int_{s(x)}^{\infty}e^{-{\eta^2}}\eta^{\beta}\cos(x\eta)d\eta
    :=J_1(x)+J_2(x),
   \]
   where $s(x)>0$ is a function to be determined shortly.

  First, it is easy to see
   \[
    |J_1(x)|\leq \int_{0}^{s(x)} \eta^{\beta}d\eta\leq C_{\beta} [s(x)]^{\beta+1}.
   \]
   For $J_2(x)$,   an  integration  by parts implies
   \begin{align*}
     |J_2(x)| &= \Big|\int_{s(x)}^{\infty}e^{-{\eta^2}}\eta^{\beta}\cos(x\eta)d\eta\Big| \\
       &=\Big|\frac 1x \int_{s(x)}^{\infty}e^{-{\eta^2}}\eta^{\beta} d \sin(x\eta) \Big| \\
       &\leq C_{\beta}\frac {[s(x)]^{\beta}}{x} +\frac {C_{\beta}}{x} \Big|\int_{s(x)}^{\infty}\eta^{\beta-1}e^{-{\eta^2}}\sin(x\eta)d\eta \Big|\\
       &\qquad +\frac {C_{\beta}}{x} \Big|\int_{s(x)}^{\infty}\eta^{\beta+1}e^{-{\eta^2}}\sin(x\eta)d\eta \Big|.
   \end{align*}
Let $k=\lceil\beta\rceil $  denote
   the least integer greater than or equal to $\beta$. Continuing the above  application of     integration by parts another   $k $ times yields
   \begin{equation*}
     |J_2(x)| \leq \frac{C_{\beta}}{x^{k+1}}+ C_{\beta}\sum_{j=0}^{k}\frac{[s(x)]^{\beta-j}+[s(x)]^{\beta+j}}{x^{j+1}}. \\
   \end{equation*}
   Combining the estimates of $J_1(x)$ and $J_2(x)$ we have
  \begin{equation*}
     |J (x)| \leq C_{\beta}[s(x)]^{\beta+1}+ \frac{C_{\beta}}{x^{k+1}}+ C_{\beta}\sum_{j=0}^{k}\frac{[s(x)]^{\beta-j}+[s(x)]^{\beta+j}}{x^{j+1}}. \\
   \end{equation*}
The lemmas follows with the choice of  $s(x)=\frac 1x$.
 \end{proof}

 Let us associate  two increments  related to the Green
 function $G_t(x)$, given as follows.
The first one is a first order difference:
 \begin{equation}\label{FirDiff}
   D_t(x,h):=G_t(x+h)-G_t(x)\,.
 \end{equation}
Denote $D(x,h) =\sqrt{\pi} D_{1/4}(x,h)=e^{-(x+h)^2}-e^{-x^2}$ .  The second one is a second order difference:
 \begin{equation}\label{SecDiff}
      \Box_{t}(x,y,h) :=G_t(x+y+h)-G_t(x+y)-G_t(x+h)+G_t(x)\,.
 \end{equation}
 As above, we denote $\Box(x,y, h) =\sqrt{\pi} \Box_{1/4}(x,y, h)$:
\begin{equation}
 \Box(x,y,h) =e^{-(x+y+h)^2}-e^{-(x+h)^2}-e^{-(x+y)^2}+e^{-x^2}\,.
 \label{e.square_def}
\end{equation}

For these two increments, we have  the following identities which are  needed   later.
 \begin{lemma}\label{NGreen}
   For any $\alpha,\beta\in(0,1)$, we have
   \begin{equation}\label{FirDiffIneq}
    \int_{\RR^2}|D_t(x,h)|^2|h|^{-1-2\beta}dhdx = \frac{C_{\beta}}{t^{\frac 12+\beta}}  
   \end{equation}
   and
   \begin{equation}\label{SecDiffIneq}
     \int_{\RR^3}|\Box_t(x,y,h)|^2|h|^{-1-2\alpha}|y|^{-1-2\beta}dydhdx = \frac{C_{\alpha,\beta}}{t^{\frac 12+\alpha+\beta}}.
   \end{equation}
 \end{lemma}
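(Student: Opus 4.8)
The plan is to move to the Fourier side in the spatial variable, where the heat kernel and its differences become elementary, and then to reduce the multiple integrals to one-dimensional Gaussian and trigonometric integrals by Tonelli's theorem and scaling.

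First I would record that, with the convention $\hat f(\xi)=\int_\RR e^{-i\xi x}f(x)\,dx$ from Proposition \ref{hSpaceProp}, the spatial Fourier transform of the heat kernel is the Gaussian $\hat G_t(\xi)=e^{-t\xi^2}$, and that Plancherel's identity reads $\int_\RR|f(x)|^2\,dx=\frac{1}{2\pi}\int_\RR|\hat f(\xi)|^2\,d\xi$. Using the translation rule $\widehat{f(\cdot+h)}(\xi)=e^{i\xi h}\hat f(\xi)$, the Fourier transforms in $x$ of the increments \eqref{FirDiff} and \eqref{SecDiff} are
\[
\widehat{D_t(\cdot,h)}(\xi)=(e^{i\xi h}-1)e^{-t\xi^2},\qquad \widehat{\Box_t(\cdot,y,h)}(\xi)=(e^{i\xi y}-1)(e^{i\xi h}-1)e^{-t\xi^2},
\]
where for the second one the key point is the algebraic factorization $e^{i\xi(y+h)}-e^{i\xi y}-e^{i\xi h}+1=(e^{i\xi y}-1)(e^{i\xi h}-1)$, which is precisely what decouples the weights in $h$ and in $y$.

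Applying Plancherel in $x$ and then Tonelli (every integrand is nonnegative) to perform the $h$- and $y$-integrations before the $\xi$-integration, I would reduce both claims to the single one-dimensional identity
\[
\int_\RR|e^{i\xi h}-1|^2|h|^{-1-2\beta}\,dh=\int_\RR 4\sin^2(\xi h/2)|h|^{-1-2\beta}\,dh=C_\beta|\xi|^{2\beta},\qquad \beta\in(0,1),
\]
whose constant $C_\beta=\int_\RR 4\sin^2(h/2)|h|^{-1-2\beta}\,dh$ is finite because $\beta<1$ controls the singularity at $h=0$ (where $\sin^2\sim h^2$) and $\beta>0$ controls the tail, and whose power $|\xi|^{2\beta}$ comes from the substitution $h\mapsto h/\xi$. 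After this, \eqref{FirDiffIneq} becomes $\frac{1}{2\pi}C_\beta\int_\RR|\xi|^{2\beta}e^{-2t\xi^2}\,d\xi$ and \eqref{SecDiffIneq} becomes $\frac{1}{2\pi}C_\alpha C_\beta\int_\RR|\xi|^{2\alpha+2\beta}e^{-2t\xi^2}\,d\xi$.

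Finally I would compute the Gaussian moment by the scaling $\xi\mapsto\xi/\sqrt{2t}$, which gives $\int_\RR|\xi|^{2\gamma}e^{-2t\xi^2}\,d\xi=C_\gamma\,t^{-\gamma-1/2}$; taking $\gamma=\beta$ yields the exponent $\frac12+\beta$ in \eqref{FirDiffIneq}, and $\gamma=\alpha+\beta$ yields $\frac12+\alpha+\beta$ in \eqref{SecDiffIneq}. I do not expect a genuinely hard step: the only things requiring care are the interchange of the order of integration (immediate from Tonelli, since everything is nonnegative) and the convergence of the auxiliary trigonometric integral, which is exactly where the hypothesis $\alpha,\beta\in(0,1)$ enters. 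I would also note that $\alpha$ plays no role in the first identity, so only $\beta\in(0,1)$ is actually needed there.
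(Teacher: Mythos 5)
Your proposal is correct and follows essentially the same route as the paper: both rest on Plancherel's identity in $x$, the factorizations $\widehat{D_t(\cdot,h)}(\xi)=(e^{i\xi h}-1)e^{-t\xi^2}$ and $\widehat{\Box_t(\cdot,y,h)}(\xi)=(e^{i\xi y}-1)(e^{i\xi h}-1)e^{-t\xi^2}$, Fubini/Tonelli, and the scaling identity $\int_\RR [1-\cos(h\xi)]|h|^{-1-2\beta}\,dh=C_\beta|\xi|^{2\beta}$ valid precisely for $\beta\in(0,1)$. The only (cosmetic) difference is that the paper extracts the power of $t$ at the outset by a change of variables in physical space, reducing to the case $t=1/4$ and then proving finiteness, whereas you carry $t$ through to the end and obtain it from the Gaussian moment $\int_\RR|\xi|^{2\gamma}e^{-2t\xi^2}\,d\xi\simeq t^{-\gamma-1/2}$.
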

 \begin{proof} With   change  of variables,  it suffices  to show
   \begin{equation}
     \begin{split}
        &\int_{\RR^2}|D(x,h)|^2|h|^{-1-2\beta}dhdx <\infty\,;  \\
        &\int_{\RR^3}|\Box(x,y,h)|^2|h|^{-1-2\alpha}|y|^{-1-2\beta}dydhdx < \infty.\label{e.Box}
     \end{split}
   \end{equation}
 The above  two inequalities  will  be derived from
   Plancherel's identity.
The Fourier  transforms with respect to the variable $x$ of
$D(x,h)$ and $\Box(x,y,h)$ are, respectively,
\[
        \hat{D}(\xi,h) =\cF[D(\cdot,h)](\xi)=\sqrt{\pi}e^{-\frac{\xi^2}{4}}[e^{ih\xi}-1]
        \]
        and
        \[
            \hat{\Box}(\xi,y,h) =\cF[\Box(\cdot,y,h)](\xi)=\sqrt{\pi} e^{-\frac{\xi^2}{4}}[e^{iy\xi}-1][e^{ih\xi}-1]
\,.
\]
Thus,  we have
   \begin{equation*}
     \begin{split}
        \int_{\RR}|D(x,h)|^2dx &=\int_{\RR}|\hat{D}(\xi,h)|^2 d\xi =4\pi\int_{\RR}e^{-\frac{\xi^2}{2}}[1-\cos(h\xi)]d\xi
     \end{split}
   \end{equation*}
   and \begin{equation*}
     \begin{split}
        \int_{\RR}|\Box(x,y,h)|^2dx &=\int_{\RR} |\hat{\Box}(\xi,y,h)|^2 d\xi =4\pi\int_{\RR}e^{-\frac{\xi^2}{2}}[1-\cos(h\xi)][1-\cos(y\xi)]d\xi.
     \end{split}
   \end{equation*}
By Fubini's theorem
 \begin{equation}
   \begin{split}
      \int_{\RR^2}|D(x,h)|^2|h|^{-1-2\beta}dhdx &=
        C\int_{\RR}  e^{-\frac{\xi^2}{2}} d\xi \int_\RR [1-\cos(h\xi)] |h|^{-1-2\beta}dh \\
        &= C\int_{\RR}  e^{-\frac{\xi^2}{2}} |\xi|^{2\beta}d\xi \int_\RR [1-\cos(h )] |h|^{-1-2\beta}dh <\infty \label{CosEq}
   \end{split}
 \end{equation}
since $     \int_{0}^{\infty} \frac{1-\cos(t)}{t^{\theta}}dt$ is finite
   for all $\theta\in(1,3)$ which requires  $ \alpha,\beta\in (0, 1)$.
   This proves the first inequality in \eqref{e.Box}.
   Same argument shows the second inequality in \eqref{e.Box} under the condition of the lemma.
 \end{proof}
 \begin{remark}
   In the rest of our paper, we shall use the lemma for
    $\alpha=\beta=\frac 12-H\in (0, 1/4)$.
 \end{remark}


 \begin{lemma}\label{LemDGcontra}
   For $D(x,h)$ and $D_t(x,h)$ defined in \eqref{FirDiff},  we have
   \begin{equation}\label{DGcontra}
     F(x):=\int_{\RR} |D(x,h)|^2 |h|^{2H-2} dh\leq C_{H} \lc1\wedge |x| ^{2H-2}\rc\,,
   \end{equation}
   and when $t>0$
   \begin{equation}
   F_t(x):=\int_{\RR} |D_t(x,h)|^2 |h|^{2H-2} dh\leq C_{H} \lc t^{  H-\frac32 } \wedge\frac{|x|^{2H-2}}{\sqrt{t}}\rc\,,  \label{DGcontra_t}
   \end{equation}
   where $0<H<\frac 12$.
 \end{lemma}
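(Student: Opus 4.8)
The plan is to establish the stationary estimate \eqref{DGcontra} first and then deduce the time-dependent estimate \eqref{DGcontra_t} from it by pure scaling, so that no integral is ever computed twice. For \eqref{DGcontra} the minimum $1\wedge|x|^{2H-2}$ is produced by two separate bounds: a \emph{uniform} bound $F(x)\le C_H$ valid for every $x$, which accounts for the factor $1$ and in particular covers small $|x|$ (where $|x|^{2H-2}\ge 1$); and a \emph{decay} bound $F(x)\le C_H|x|^{2H-2}$ valid for $|x|\ge 2$, which covers large $|x|$ (where $|x|^{2H-2}\le 1$). The two together give $F(x)\le C_H\lc 1\wedge|x|^{2H-2}\rc$.

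For the uniform bound I would split the $h$-integral at $|h|=1$. On $\{|h|\le 1\}$ the weight $|h|^{2H-2}$ is \emph{not} integrable (recall $2H-2\in(-3/2,-1)$), so the first order cancellation in $D(x,h)=e^{-(x+h)^2}-e^{-x^2}$ must be retained: by the mean value theorem $|D(x,h)|\le |h|\sup_{u}|2u e^{-u^2}|\le C|h|$, which upgrades the weight to the integrable $|h|^{2H}$. On $\{|h|>1\}$ the weight is already integrable, so the crude bound $|D(x,h)|^2\le 2e^{-2(x+h)^2}+2e^{-2x^2}$ suffices: the $e^{-2x^2}$-term is controlled by $\int_{|h|>1}|h|^{2H-2}\,dh<\infty$, and the $e^{-2(x+h)^2}$-term by $\int_\RR e^{-2u^2}\,du$ after the substitution $u=x+h$. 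This yields $F(x)\le C_H$ for all $x$.

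The decay bound is the sharp part, and it reflects the fact that for large $|x|$ the mass of $F(x)$ comes from $h\approx-x$, where $e^{-(x+h)^2}$ is of order one while $|h|\approx|x|$, so that $|h|^{2H-2}\approx|x|^{2H-2}$. To make this quantitative for $|x|\ge 2$ I would again peel off $\{|h|\le 1\}$, where the mean value theorem now gives the much stronger $|D(x,h)|\lesssim|h|\,|x|\,e^{-x^2/4}$ (since $|x+s|\ge|x|-1\ge|x|/2$ there), producing a term $\lesssim x^2e^{-x^2/2}\le C|x|^{2H-2}$; here too the cancellation is kept because the weight is still non-integrable at the origin. On $\{|h|>1\}$ I would use $|D|^2\le 2e^{-2(x+h)^2}+2e^{-2x^2}$ (legitimate now that the singularity at $h=0$ has been excised), substitute $u=x+h$ in the first term, and decompose $\{|u|\le|x|/2\}\cup\{|u|>|x|/2\}$: on the former $|u-x|\ge|x|/2$ gives $|u-x|^{2H-2}\le C|x|^{2H-2}$ against the Gaussian mass, while on the latter $e^{-2u^2}\le e^{-x^2/4}e^{-u^2}$ together with $|u-x|^{2H-2}\le 1$ gives an exponentially small contribution. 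The $e^{-2x^2}$-term is negligible as before. Hence $F(x)\le C_H|x|^{2H-2}$ for $|x|\ge 2$, and combined with the uniform bound this proves \eqref{DGcontra}.

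Finally, \eqref{DGcontra_t} follows by scaling. Writing $a=2\sqrt t$ and using $G_t(x)=a^{-1}G_{1/4}(x/a)$ one obtains $D_t(x,h)=\frac{1}{a\sqrt\pi}D(x/a,h/a)$, whence the substitution $h=ag$ produces the exact identity $F_t(x)=\frac{a^{2H-3}}{\pi}F(x/a)$. Inserting \eqref{DGcontra} with $y=x/a$ and simplifying the powers of $a=2\sqrt t$ (so that $a^{2H-3}\es t^{H-3/2}$ and $a^{2H-3}|x/a|^{2H-2}\es |x|^{2H-2}/\sqrt t$) gives precisely $F_t(x)\le C_H\lc t^{H-3/2}\wedge \frac{|x|^{2H-2}}{\sqrt t}\rc$. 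The main obstacle throughout is the non-integrability of the weight $|h|^{2H-2}$ at $h=0$: it forbids the naive pointwise splitting of $|D|^2$ near the origin and forces one to keep the first order cancellation there, all while simultaneously extracting the sharp $|x|^{2H-2}$ decay from the far region $h\approx -x$.
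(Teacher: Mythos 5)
Your proof is correct, but it takes a genuinely different route from the paper's. The paper argues entirely on the Fourier side: it writes $F(x)$ via Plancherel as a double oscillatory integral $\int_{\RR^2} e^{-(\eta_1^2+\eta_2^2)/4}H(\eta_1,\eta_2)e^{ix(\eta_1-\eta_2)}\,d\eta_1 d\eta_2$ with the kernel $H(\eta_1,\eta_2)=C_H\bigl(|\eta_1|^{1-2H}+|\eta_2|^{1-2H}-|\eta_1-\eta_2|^{1-2H}\bigr)$, and then extracts the $|x|^{2H-2}$ decay from Lemma \ref{TechLemma2}, i.e.\ from repeated integration by parts applied to integrals of the form $\int_0^\infty e^{-\eta^2}\eta^{\beta}\cos(x\eta)\,d\eta$ with $\beta=1-2H$. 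You instead work in physical space: the mean value theorem retains the first-order cancellation where the weight $|h|^{2H-2}$ is non-integrable, and a Gaussian-tail decomposition around $h\approx -x$ produces the sharp decay; the reduction of \eqref{DGcontra_t} to \eqref{DGcontra} by scaling is the same in both. Your argument is more elementary and self-contained (no Fourier transform, no oscillatory-integral lemma), and it makes transparent that the mass of $F(x)$ for large $|x|$ lives near $h=-x$. What the paper's heavier machinery buys is reusability: the identity \eqref{HEqv} and Lemma \ref{TechLemma2} are exactly the tools recycled in the second-order estimate of Lemma \ref{LemBGcontra} for $\Box(x,y,h)$, where the kernel $H^2(\eta_1,\eta_2)$ appears and a direct real-variable treatment of the double cancellation in $h$ and $y$ would be considerably more delicate; your method would need genuine extra work there, so be aware it does not automatically propagate to the companion lemma.
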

 \begin{proof}
   The   assertion  \eqref{DGcontra_t} is an easy consequence of     \eqref{DGcontra}  by   change of variables so we only need to provide  a  proof   for  \eqref{DGcontra}.

Recall that the Fourier transform of $  {D}(x,h)$  (as a function of $x$)
is
   \[
    \hat{D}(\eta,h)=\cF[D(\cdot,h)](\eta)=\sqrt{\pi}e^{-\frac{\eta^2}{4}}[e^{ih\eta}-1]\,.
   \]
By the inverse Fourier transformation  $D(x,h) $ can also be written as
\[
D(x,h)=\frac1{2\pi} \int_\RR  \hat{D}(\eta,h)e^{ix\eta} d\eta=\frac1{2
\sqrt{\pi}}\int_\RR e^{-\frac{\eta^2}{4}}[e^{ih\eta}-1]e^{ix\eta} d\eta\,.
\]
Therefore,  we  can write
\begin{align*}
    F(x)
&=C_H  \pi^2 \int_{\RR^2}   e^{-\frac{\eta_1^2+\eta_2^2} {4}}\int_\RR [e^{ih\eta_1}-1]
 \overline{[e^{ih\eta_2}-1]} |h|^{2H-2} dh \   e^{ix(\eta_1-\eta_2)}d\eta_1 d\eta_2 \\
    &=C_H   \int_{\RR^2} e^{-\frac{\eta^2_1+\eta^2_2}{4}}H(\eta_1,\eta_2)e^{ix(\eta_1-\eta_2)}d\eta_1d\eta_2\,,
   \end{align*}
where similar to \eqref{CosEq}, we have
 \begin{align}
 H(\eta_1,\eta_2)
 &=C_H  \int_\RR [e^{ih\eta_1}-1]  \overline{[e^{ih\eta_2}-1]} |h|^{2H-2} dh \nonumber\\
 &=C_H \lc |\eta_1|^{1-2H}+|\eta_2|^{1-2H}-|\eta_1-\eta_2|^{1-2H}\rc  \,.
 \label{HEqv}
 \end{align}
It is easy to see that $\sup_{x\in \RR} |F(x)|\le C<\infty$.   Now, we want to get the desired decay estimate when $x$ goes to infinity.  We have
\begin{align*}
     F(x)
     &\leq C_H  \left|\int_{\RR^2} e^{-\frac{\eta^2_1+ \eta^2_2}{4}}|\eta_2|^{1-2H} e^{ix(\eta_1-\eta_2)}  d\eta_1d\eta_2\right|
     \nonumber\\
     &\qquad  +C_H \left|\int_{\RR^2} e^{-\frac{\eta^2_1+\eta^2_2}{4}}|\eta_1-\eta_2|^{1-2H}e^{ix(\eta_1-\eta_2)}d\eta_1d\eta_2\right|
     \\
    &\leq C_H e^{-x^2}\lt|\int_{\RR} e^{-\frac{\eta_2^2}{4}}|\eta_2|^{1-2H}e^{-ix\eta_2}d\eta_2\rt|\nonumber\\
    &\qquad
      + C_H \lt|\int_{\RR} \left[ |\eta|^{1-2H}e^{-ix\eta}  \int_\RR e^{-\frac{|\eta_2|^2+|\eta_2+\eta|^2}{4}}d\eta_2\right]d\eta \rt| \\
    &\leq C_H e^{-x^2}\lt|\int_{\RR_+} e^{-\frac{\eta^2}{4}}|\eta|^{1-2H}cos(x\eta)d\eta\rt|
      +C_H \lt|\int_{\RR_+}e^{-\frac{\eta^2}{8}}|\eta|^{1-2H}cos(x\eta)d\eta\rt|
   \end{align*}
since
\[
\int_\RR e^{-\frac{|\eta_2|^2+|\eta_2+\eta|^2}{4}}d\eta_2=C e^{-\frac{|\eta|^2}{8}}\,.
\]
 Now the inequality    \eqref{DGcontra}  follows from    Lemma \ref{TechLemma2}.
 \end{proof}

 \begin{lemma}\label{LemBGcontra}
   Recall that $\Box_t(x,y,h)$ and $\Box (x,y,h)$ are defined  by  \eqref{SecDiff}
   and \eqref{e.square_def}. We have
   \begin{equation}\label{BGcontra}
     F(x):=\int_{\RR^2} |\Box(x,y,h)|^2 |h|^{2H-2}|y|^{2H-2} dydh\leq C_{H} \lc 1\wedge |x|  ^{2H-2}\rc\,.
   \end{equation}
Moreover,  for any $t>0$ we have
   \begin{equation}
   F_t(x):=\int_{\RR^2} |\Box_t(x,y,h)|^2 |h|^{2H-2}|y|^{2H-2} dydh\leq C_{H} \lc t^{2H-2}\wedge\frac{|x|^{2H-2}}{t^{1-H}}\rc\,.
   \end{equation}
 \end{lemma}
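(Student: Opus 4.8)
The plan is to reduce the estimate for $F_t$ to the single estimate \eqref{BGcontra} for $F$ by scaling, and then to prove \eqref{BGcontra} by the same Fourier--Plancherel computation that was used for Lemma \ref{LemDGcontra}. For the reduction I would use $G_t(w)=\frac{1}{2\sqrt{\pi t}}e^{-(w/(2\sqrt t))^2}$, which together with \eqref{SecDiff} and \eqref{e.square_def} gives the exact self-similarity
\[
\Box_t(x,y,h)=\frac{1}{2\sqrt{\pi t}}\,\Box\lc\tfrac{x}{2\sqrt t},\tfrac{y}{2\sqrt t},\tfrac{h}{2\sqrt t}\rc\,.
\]
Substituting $y=2\sqrt t\,y'$ and $h=2\sqrt t\,h'$ in the weighted double integral and collecting the powers of $2\sqrt t$ from the Jacobian and from the two weights $|h|^{2H-2}|y|^{2H-2}$ yields $F_t(x)=C\,t^{2H-2}F\lc x/(2\sqrt t)\rc$. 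Feeding \eqref{BGcontra} into this and separating the two regimes $|x|\le 2\sqrt t$ and $|x|>2\sqrt t$ reproduces exactly the stated bound $C_H\lc t^{2H-2}\wedge |x|^{2H-2}t^{H-1}\rc$. So everything comes down to \eqref{BGcontra}.

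To prove \eqref{BGcontra} I would mimic the proof of Lemma \ref{LemDGcontra}. Starting from the spatial Fourier transform $\hat\Box(\xi,y,h)=\sqrt\pi\,e^{-\xi^2/4}[e^{iy\xi}-1][e^{ih\xi}-1]$, I write $\Box(x,y,h)$ through the inverse transform in two independent frequencies $\eta_1,\eta_2$ and use Fubini to perform the $y$- and $h$-integrations first. Each of these two integrations is precisely the one evaluated in \eqref{HEqv}, so both produce the factor $H(\eta_1,\eta_2)=C_H\lc|\eta_1|^{1-2H}+|\eta_2|^{1-2H}-|\eta_1-\eta_2|^{1-2H}\rc$, and one is left with
\[
F(x)=C_H\int_{\RR^2}e^{-\frac{\eta_1^2+\eta_2^2}{4}}\,H(\eta_1,\eta_2)^2\,e^{ix(\eta_1-\eta_2)}\,d\eta_1 d\eta_2\,.
\]
The uniform bound $F(x)\le C_H$ is then immediate, since $H(\eta_1,\eta_2)^2\le C_H\lc|\eta_1|^{2-4H}+|\eta_2|^{2-4H}+|\eta_1-\eta_2|^{2-4H}\rc$ and each summand is integrable against the Gaussian because $2-4H\in(0,1)$.

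For the decay in $x$, I would expand $H^2$ into its six monomials and bound the modulus of each frequency integral separately. The two one-sided powers $|\eta_1|^{2-4H}$ and $|\eta_2|^{2-4H}$ factor off a pure Gaussian transform $\int e^{-\eta^2/4}e^{\pm ix\eta}\,d\eta=Ce^{-x^2}$ and hence decay exponentially. The coupled power $|\eta_1-\eta_2|^{2-4H}$, after the substitution $\zeta=\eta_1-\eta_2$, reduces to one integral of the type controlled by Lemma \ref{TechLemma2} with $\beta=2-4H$, giving decay $|x|^{-(3-4H)}$; the product $|\eta_1|^{1-2H}|\eta_2|^{1-2H}$ factors into two such integrals with $\beta=1-2H$, giving decay $|x|^{-(4-4H)}$. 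Since $3-4H>2-2H$ and $4-4H>2-2H$ for $H\in(\frac14,\frac12)$, all of these are faster than the target rate, so the decay is governed by the two cross terms $|\eta_1|^{1-2H}|\eta_1-\eta_2|^{1-2H}$ and $|\eta_2|^{1-2H}|\eta_1-\eta_2|^{1-2H}$.

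The hard part will be these two cross terms, which carry the sharp rate $|x|^{2H-2}$. Setting $\zeta=\eta_1-\eta_2$ and integrating out the remaining variable, each reduces to $\int_\RR g(\zeta)|\zeta|^{1-2H}e^{ix\zeta}\,d\zeta$, where $g(\zeta)=e^{-\zeta^2/8}\int_\RR e^{-(\eta+\zeta/2)^2/2}|\eta|^{1-2H}\,d\eta$ is smooth, even, and rapidly decreasing, with $g(\zeta)\le Ce^{-\zeta^2/8}(1+|\zeta|^{1-2H})$. Lemma \ref{TechLemma2} does not apply verbatim because the amplitude $g$ is not a pure Gaussian, so I would re-run its device: split $\int_0^{1/|x|}+\int_{1/|x|}^\infty$, bound the first piece directly by $\int_0^{1/|x|}\zeta^{1-2H}\,d\zeta\simeq|x|^{2H-2}$, and integrate the tail by parts. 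Here lies the real subtlety: because $2H-2<-1$, a single integration by parts gives the correct boundary contribution $O(|x|^{2H-2})$ but leaves remainder integrals of size only $O(|x|^{-1})$, which is too slow; these must be integrated by parts once more, the new boundary term at $\zeta=1/|x|$ supplying the gain $|x|^{2H-1}$ that, after the prefactor $|x|^{-1}$, upgrades the remainders to $O(|x|^{2H-2})$. Convergence of all the resulting integrals is guaranteed by the Gaussian decay of $g,g',g''$ together with the integrability of $\zeta^{-2H}$ at the origin (as $2H<1$). This yields the sharp $|x|^{2H-2}$ bound for the cross terms and hence \eqref{BGcontra}; the estimate for $F_t$ then follows from the scaling reduction of the first paragraph.
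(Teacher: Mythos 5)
Your proposal is correct and follows essentially the same route as the paper's own proof: the same scaling reduction of $F_t$ to $F$, the same Plancherel representation with the kernel $H(\eta_1,\eta_2)^2$, the same six-term expansion with the cross terms $|\eta_i|^{1-2H}|\eta_1-\eta_2|^{1-2H}$ identified as the only terms carrying the sharp rate, and the same treatment of those terms as a one-dimensional oscillatory integral with non-Gaussian amplitude, split at $1/|x|$ and integrated by parts twice. The quantitative claims (boundary terms of order $|x|^{2H-2}$, remainders upgraded from $O(|x|^{-1})$ to $O(|x|^{2H-2})$ by the second integration by parts) match the paper's estimates $J_1, J_{21}, J_{22}, J_{23}, J_{24}$ with $s(x)=1/x$.
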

 \begin{proof} As in the proof of  Lemma
 \ref{LemDGcontra} we only need
 to prove \eqref{BGcontra} and last inequality can be derived from \eqref{BGcontra}  by a change of variable.

   The proof of \eqref{BGcontra} is similar to that of Lemma \ref{LemDGcontra}. Recall the Fourier transform of $\Box(x,y,h)$ as a function of $x$:
\[
\hat{\Box}(\eta,y,h) =\cF[\Box(\cdot,y,h)](\eta)=\sqrt{\pi} e^{-\frac{\eta^2}{4}}[e^{iy\eta}-1][e^{ih\eta}-1] \,.
\]
This means
\[
 {\Box}(x,y,h) = \sqrt{\pi} \int_\RR e^{-\frac{\eta^2}{4}}[e^{iy\eta}-1][e^{ih\eta}-1] e^{ix\eta} d\eta  \,.
\]
Thus, we have
\begin{align}
F(x)&=\int_{\RR^4}
e^{-\frac{\eta_1^2+\eta_2^2}{4}}[e^{iy \eta_1 }-1][e^{ih \eta_1}-1]\cdot\overline{[e^{iy \eta_2 }-1]}\nonumber\\
&\quad \overline{[e^{ih \eta_2}-1]} |h|^{2H-2}|y|^{2H-2} e^{ix(\eta_1-\eta_2)}   dydh d\eta_1 d\eta_2
\nonumber\\
&=2\pi^2 \int_{\RR^2} e^{-\frac{\eta^2_1+\eta^2_2}{4}}H^2(\eta_1,\eta_2)e^{ix(\eta_1-\eta_2)}d\eta_1d\eta_2\,,
\end{align}
where $H(\eta_1,\eta_2)$ is defined in \eqref{HEqv} or
   \begin{align*}
   {H^2(\eta_1,\eta_2)}= C_H^2\bigg(  & |\eta_1|^{2-4H}+|\eta_2|^{2-4H}+|\eta_1|^{1-2H}|\eta_2|^{1-2H}+|\eta_1-\eta_2|^{2-4H} \\
     & -|\eta_1|^{1-2H}|\eta_1-\eta_2|^{1-2H}-|\eta_2|^{1-2H}|\eta_1-\eta_2|^{1-2H}\bigg)\,.
   \end{align*}
   It is easy to see that  $\sup_{x\in \RR} |F(x)|\le C<\infty$.
Now we want to show the desired decay rate as $x\rightarrow \infty$. By the symmetry $F(-x)=F(x)$, we can and will assume $x\ge 1$.
The argument in the proof of    Lemma \ref{LemDGcontra} can be used to obtain the desired bound for each of the above terms  {except} the terms $|\eta_1-\eta_2|^{2-4H}$ and $|\eta_1|^{1-2H}|\eta_1-\eta_2|^{1-2H}$ (and $|\eta_2|^{1-2H}|\eta_1-\eta_2|^{1-2H}$,    which can be handled analogously).

   For term $|\eta_1-\eta_2|^{2-4H}$, letting $\xi_1=\eta_1-\eta_2$ and $\xi_2=\eta_1+\eta_2$ implies
   \begin{align*}
     &\int_{\RR^2} e^{-\frac{\eta^2_1+\eta^2_2}{4}} |\eta_1-\eta_2|^{2-4H} e^{ix(\eta_1-\eta_2)}d\eta_1 d\eta_2 \\
    =& C \int_{\RR^2} e^{-\frac{\xi^2_1+\xi^2_2}{8}} |\xi_1|^{2-4H}e^{ix\xi_1}d\xi_1d\xi_2
    =C \int_{\RR_{+}}e^{-\frac{\xi^2}{8}}|\xi|^{2-4H} \cos(x\xi)d\xi.
   \end{align*}
   Then using Lemma \ref{TechLemma2}, we see that  this term is bounded by $1\wedge |x|^{4H-3}\lesssim 1\wedge |x|^{2H-2}$ for $\frac 14<H<\frac 12$.

   In order to deal with the second term
   $|\eta_1|^{1-2H}|\eta_1-\eta_2|^{1-2H}$, we make  the substitution  $\xi =\eta_1$ and $\eta=\frac12(\eta_1-\eta_2)$
   to obtain
   \begin{align*}
     J(x):=&\int_{\RR^2} e^{-\frac{\eta^2_1+\eta^2_2}{4}} |\eta_1|^{1-2H}|\eta_1-\eta_2|^{1-2H} e^{ix(\eta_1-\eta_2)}d\eta_1 d\eta_2 \\
    =&C \int_{\RR^2} \exp\lc-\frac{(\xi-\eta)^2}{2}\rc \exp\lc-\frac{\eta^2}{2}\rc |\xi|^{1-2H}|\eta|^{1-2H} e^{i2x\eta} d\xi d\eta\,.
   \end{align*}
Denote
   \[
    E(\eta):=\int_{\RR}  \exp\lc-\frac{(\xi-\eta)^2}{2}\rc |\xi|^{1-2H}d\xi\,.
   \]
We need to show a similar inequality to  that  in Lemma \ref{TechLemma2}:
   \[
    |J(x)| =\lt|\int_{0}^{\infty}e^{-\frac{\eta^2}{2}}\eta^{1-2H}E(\eta)\cos(2x\eta)d\eta\rt| \leq C_H \lc1\wedge |x|^{2H-2}\rc.
   \]
First,   we  observe that $|E(\eta)|\leq C_H (1+|\eta|^{1-2H})$   and
    both $|E'(\eta)|$ and $|E''(\eta)|$ can be bounded by a multiple of
   \[
    \int_{\RR} \exp\lc-\frac{(\xi-\eta)^2}{4}\rc |\xi|^{1-2H}d\xi\leq C_H \lc1+|\eta|^{1-2H}\rc\,.
   \]

We only need to care the case when $x$ is large. Let us split $ J(x) $ into two parts of which one integrates from $0$ to $s(x)$, denoted by $J_1(x)$,
 and the other integrates from $s(x)$ to infinity, denoted by $J_2(x)$,   such that   $s(x)\rightarrow 0$ as $x$ goes to infinity  and  whose precise form will be given   later. For the first part
   \[
    |J_1(x)|\leq [s(x)]^{1-2H}\int_{0}^{s(x)}|E(\eta)| d\eta \leq C_H \lc [s(x)]^{2-2H}+[s(x)]^{3-4H} \rc.
   \]
   For $J_2(x)$, an integration  by parts  yields
   \begin{align*}
     &|J_2(x)| = \Big|\int_{s(x)}^{\infty}e^{-{\frac{\eta^2}{2}}}\eta^{1-2H}E(\eta)\cos(2x\eta)d\eta\Big| \\
       =&C\Big|\frac 1x \int_{s(x)}^{\infty}e^{-{\frac{\eta^2}{2}}}\eta^{1-2H}E(\eta) d \sin(2x\eta) \Big| \\
       \leq& C_H\frac {[s(x)]^{1-2H}}{x} e^{-{\frac{[s(x)]^2}{2}}}|E(s(x))|+\frac {C_H}{x} \Big|\int_{s(x)}^{\infty}\eta^{-2H}e^{-{\frac{\eta^2}{2}}}\sin(2x\eta)E(\eta)d\eta \Big|\\
       +&\frac {C_H}{x} \Big|\int_{s(x)}^{\infty}\eta^{2-2H}e^{-{\frac{\eta^2}{2}}}\sin(2x\eta)E(\eta)d\eta \Big| +\frac {C_H}{x} \Big|\int_{s(x)}^{\infty}\eta^{1-2H}e^{-{\frac{\eta^2}{2}}}\sin(2x\eta)E'(\eta)d\eta \Big|\\
       =:& J_{21}+J_{22} +J_{23}+J_{24} \,.
   \end{align*}
The first term is  bounded   by
\[
J_{21}(x)\leq C_H \frac 1x [s(x)]^{1-2H}
\,.
\]
As  for $J_{22}(x)$ an integration by parts yields
   \begin{align*}
    J_{22}(x):=&\frac 1x \Big|\int_{s(x)}^{\infty}\eta^{-2H}e^{-{\frac{\eta^2}{2}}}\sin(2x\eta)E(\eta)d\eta \Big| \\
     \leq& C\frac{E(s(x))}{x^2} [s(x)]^{-2H}+\frac{C}{x^2}\int_{s(x)}^{\infty} \lt|\frac{d}{d \eta}\lt[\eta^{-2H}E(\eta)e^{-\frac{\eta^2}{2}}\rt]\rt| d\eta\\
     \leq& \frac{C_H}{x^2} [s(x)]^{-2H}+\frac{C_H}{x^2} [s(x)]^{1-4H}+\frac{C_H}{x^2}\,.
   \end{align*}
In the same way we can bound     $J_{23}(x)$ as follows.
   \begin{align*}
    J_{23}(x):=& \frac 1x \Big|\int_{s(x)}^{\infty}\eta^{2-2H}e^{-{\frac{\eta^2}{2}}}\sin(2x\eta)E(\eta)d\eta \Big| \\
     \leq& C\frac{E(s(x))}{x^2}[s(x)]^{2-2H} +\frac{C}{x^2}\int_{s(x)}^{\infty} \lt|\frac{d}{d \eta}\lt[\eta^{2-2H}E(\eta)e^{-{\frac{\eta^2}{2}}}\rt]\rt|d\eta\\
     \leq& \frac{C_H}{x^2} [s(x)]^{2-2H}+\frac{C_H}{x^2} [s(x)]^{3-4H}+\frac{C_H}{x^2}\,.
   \end{align*}
The term  $J_{24}(x)$ satisfies
   \begin{align*}
    J_{24}(x):=& \frac 1x \Big|\int_{s(x)}^{\infty}\eta^{1-2H}e^{-{\frac{\eta^2}{2}}}\sin(x\eta)E'(\eta)d\eta \Big| \\
     \leq& C\frac{E'(s(x))}{x^2}[s(x)]^{1-2H} +\frac{C}{x^2}\int_{s(x)}^{\infty} \lt|\frac{d}{d \eta}\lt[\eta^{1-2H}E'(\eta)e^{-{\frac{\eta^2}{2}}}\rt]\rt|d\eta\\
     \leq& \frac{C_H}{x^2}[s(x)]^{1-2H}+\frac{C_H}{x^2}[s(x)]^{2-4H}+\frac{C_H}{x^2}\,.
   \end{align*}
Noticing that $\frac 14<H<\frac 12$, and   taking $s(x)=\frac 1x$   imply our result.
 \end{proof}

 \begin{lemma}\label{LemDGLcontra}
Denote  $\lambda(x)=\frac{1}{(1+|x|^2)^{1-H}}$  and recall $D_t(x,h)$ defined by \eqref{FirDiff} and  $\Box_t(x,y,h)$ defined by \eqref{SecDiff}.  We have
   \begin{equation}\label{DGLcontra}
    \begin{split}
       &\int_{\RR^2} |D_t(x,h)|^2|h|^{2H-2} \lambda(z-x)dxdh\leq C_{T,H} t^{H-1} \lambda(z),\\
       &\int_{\RR^3} |\Box_t(x,y,h)|^2|h|^{2H-2}|y|^{2H-2} \lambda(z-x)dxdydh\leq C_{T,H} t^{2H-\frac 32} \lambda(z).
    \end{split}
   \end{equation}
 \end{lemma}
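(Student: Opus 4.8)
The plan is to integrate out the ``inner'' variables first and reduce \emph{both} estimates to a single convolution inequality for the weight. For the first bound I would carry out the $h$-integration using Lemma~\ref{LemDGcontra}: writing the left-hand side as
\[
\int_{\RR^2}|D_t(x,h)|^2|h|^{2H-2}\lambda(z-x)\,dx\,dh=\int_\RR F_t(x)\,\lambda(z-x)\,dx,
\]
Lemma~\ref{LemDGcontra} supplies $F_t(x)\le C_H\bigl(t^{H-3/2}\wedge |x|^{2H-2}t^{-1/2}\bigr)$. In exactly the same way Lemma~\ref{LemBGcontra} collapses the triple integral to $\int_\RR F_t(x)\lambda(z-x)\,dx$ with $F_t(x)\le C_H\bigl(t^{2H-2}\wedge|x|^{2H-2}t^{H-1}\bigr)$. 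Both bounds have the self-similar form $F_t(x)\le C_H\,t^{\,b}\,g(x/\sqrt t)$ with $g(u):=1\wedge|u|^{2H-2}$ and $b=H-\tfrac32$ (resp.\ $b=2H-2$); crucially $\int_\RR g\,du<\infty$ since $2H-2<-1$. Substituting $x=\sqrt t\,u$ turns each left-hand side into $C_H\,t^{\,b+1/2}\int_\RR g(u)\lambda(z-\sqrt t\,u)\,du$, and because $b+\tfrac12$ equals $H-1$ (resp.\ $2H-\tfrac32$), both conclusions in \eqref{DGLcontra} follow at once from the single estimate
\begin{equation}\label{e.core}
\int_\RR g(u)\,\lambda(z-s u)\,du\le C_{T,H}\,\lambda(z),\qquad s:=\sqrt t\in(0,\sqrt T],\ z\in\RR .
\end{equation}

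To prove \eqref{e.core} I split $\RR$ according to whether $z-su$ lies in the bulk or the peak of $\lambda$. On $\{\,|z-su|\ge|z|/2\,\}$ the elementary inequality $1+|z-su|^2\ge\tfrac14(1+|z|^2)$ gives $\lambda(z-su)\le 4^{1-H}\lambda(z)$, so this part contributes at most $4^{1-H}\lambda(z)\int_\RR g\,du\ls \lambda(z)$. On the complement $\{\,|z-su|<|z|/2\,\}$ one has $|su|>|z|/2$, hence $|u|>R:=|z|/2s$ and $g(u)\le g(R)$ since $g$ is nonincreasing in $|u|$; bounding $g$ by $g(R)$ and changing variables $w=z-su$ gives a contribution at most $g(R)\,s^{-1}\!\int_{|w|<|z|/2}\lambda(w)\,dw\le g(R)\,s^{-1}\bigl(\|\lambda\|_{L^1}\wedge|z|\bigr)$, where I used $\lambda\le\lambda(0)$ together with the length $|z|$ of the interval.

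The remaining task, and the only delicate point, is to verify $g(R)\,s^{-1}\bigl(1\wedge|z|\bigr)\le C_{T,H}\lambda(z)$. When $R\le1$ (i.e.\ $|z|\le2s$) one has $g(R)=1$ and $s^{-1}(1\wedge|z|)\le2$, while $\lambda(z)\ge(1+4T)^{H-1}$; when $R>1$ one has $g(R)=(2s)^{2-2H}|z|^{2H-2}$, and the prefactor $s^{1-2H}\le T^{(1-2H)/2}$ stays bounded because $1-2H>0$, after which the sub-cases $|z|>1$ and $2s<|z|\le1$ are matched against $\lambda(z)\gs|z|^{2H-2}$ and $\lambda(z)\gs1$ respectively, the key cancellation being that in the range $2s<|z|\le1$ the estimate collapses to $2^{2-2H}(s/|z|)^{1-2H}\le2$. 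This uniform control is precisely where the naive weight transfer $\lambda(z-su)\le C\lambda(z)\lambda^{-1}(u)$ must be avoided: since $g$ and $\lambda$ share the same tail exponent $2H-2$, that transfer destroys all decay and makes the $u$-integral diverge. The main obstacle is therefore obtaining the estimate \emph{uniformly} as $t\to0$ and as $|z|\to\infty$ simultaneously, which forces one to retain the decay of $\lambda(z-su)$ near its peak and to exploit the coupling $|z|>2s$ between the spatial point and the time scale; granting this, \eqref{DGLcontra} follows.
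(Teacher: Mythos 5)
Your proposal is correct, and at the top level it follows the same route as the paper: both proofs use Lemmas \ref{LemDGcontra} and \ref{LemBGcontra} together with parabolic scaling to reduce the two inequalities of \eqref{DGLcontra} to a single uniform convolution bound --- your core estimate with $g(u)=1\wedge|u|^{2H-2}$ is exactly the paper's \eqref{e.3.18} after the substitution $x=\sqrt t\,u$ --- and both then split the integration domain according to whether the argument of $\lambda$ lies within $|z|/2$ of its peak (the paper's two regions for $z\ge 2$ are your bulk/peak split in rescaled variables). Where you genuinely diverge is the treatment of the peak region. The paper computes $\int_z^{2z}\bigl(1+\sqrt t\,|x-z|\bigr)^{2H-2}dx$ in closed form and then proves boundedness of the auxiliary function $f(u)=\frac{1+u^{2-2H}}{u(1+u^{1-2H})}\left[1-(1+u)^{2H-1}\right]$ by examining its limits at $0$ and $\infty$; you avoid all explicit integration, bounding the kernel by its maximum $g\bigl(|z|/(2s)\bigr)$ over the region and the weight integral by $\|\lambda\|_{L^1}\wedge|z|$, and then settle the remaining elementary inequality $g(R)\,s^{-1}(1\wedge|z|)\le C_{T,H}\lambda(z)$ by a three-case check (I verified all three cases; they are complete and correct). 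Your version is somewhat more elementary --- no antiderivative, no limit analysis, no symmetry reduction to $z\ge 2$ --- and it makes the joint uniformity as $t\to 0$ and $|z|\to\infty$ visible case by case, whereas the paper's computation exhibits the exact borderline behaviour through the function $f$. Your closing remark, that the naive transfer $\lambda(z-su)\lesssim\lambda(z)\lambda(u)^{-1}$ must be avoided because $g$ and $\lambda$ share the tail exponent $2H-2$, is precisely the content of the paper's Remark \ref{RateLam} on why the exponent $1-H$ in the weight is critical.
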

 \begin{proof}
   Set
   \[
    R(x,z)=\frac{\lambda(z-x)}{\lambda(z)}\simeq \lc\frac{1+|z|}{1+|x-z|} \rc^{2-2H}\,,
   \]
where and throughout the paper for two functions $f$ and $g$, notation $f\simeq g$ means   that there exist two positive constants $c_H$ and $C_H$ such that
 $c_H g\leq f\leq C_H g$.
By Lemma \ref{NGreen}, we have by change of variables $x\to {x}{\sqrt{t}}$, $h\to   {h}{\sqrt{t}}$ and $z\to {z}{\sqrt{t}}$
   \begin{align}
      &\int_{\RR^2} |D_t(x,h)|^2|h|^{2H-2} R(x,z)dxdh \nonumber \\
     \leq&C_H t^{H-1} \int_{\RR^2} |D(x,h)|^2 |h|^{2H-2}R(\sqrt{t}x,\sqrt{t}z)dxdh  \nonumber \\
     \leq& C_H t^{H-1} \int_{\RR} \lc 1\wedge |x|^{2H-2} \rc R(\sqrt{t}x,\sqrt{t}z)dx.\label{e.3.16}
   \end{align}
   Similarly,  making substitutions    $x\to {x}{\sqrt{t}}$, $y\to  {y}{\sqrt{t}}$, $h\to   {h}{\sqrt{t}}$ and $z\to {z}{\sqrt{t}}$  we can get rid of the $t$ in $\Box_t$: 
   \begin{align}
      &\int_{\RR^3} |\Box_t(x,y,h)|^2|h|^{2H-2}|y|^{2H-2} R(x,z)dxdydh
       \nonumber  \\
     =&C_H t^{2H-\frac 32} \int_{\RR^3} |\Box(x,y,h)|^2 |h|^{2H-2}|y|^{2H-2} R(\sqrt{t}x,\sqrt{t}z)dxdydh  \nonumber \\
     \leq&C_H t^{2H-\frac 32} \int_{\RR} \lc 1\wedge |x|^{2H-2} \rc R(\sqrt{t}x,\sqrt{t}z)dx.\label{e.3.17}
   \end{align}
Notice that the  above change  of variable with respect to  $z$ is not essential because we will take its  supremum   over  $\RR$. But it will be  convenient for us  to split the intervals. 
From \eqref{e.3.16} and \eqref{e.3.17} to show our lemma it is sufficient to show
\begin{equation}
    \sup_{t\in[0,T]}\sup_{z\in \RR}\int_{\RR} \lc 1\wedge |x|^{2H-2} \rc R(\sqrt{t}x,\sqrt{t}z)dx <\infty.\label{e.3.18}
   \end{equation}
Notice that we assume that $t\in [0, T]$ is bounded.
If $z$ is bounded,  then $R(\sqrt{t}x,\sqrt{t}z)$ is also bounded.
Then, we have
\begin{equation}
 \sup_{t\in[0,T]}\sup_{|z|\le 2}\int_{\RR} \lc 1\wedge |x|^{2H-2} \rc R(\sqrt{t}x,\sqrt{t}z)dx\ \leq \
 C_{T,H} \int_{\RR}   1\wedge |x|^{2H-2}  dx<\infty.
 \label{e.3.19}
\end{equation}
 This means that  we only need to consider the case $|z|\ge 2$. Due to the symmetry $R(-\sqrt{t}x,-\sqrt{t}z)=R(\sqrt{t}x,\sqrt{t}z)$,    we can assume $z\ge 2$.

Next we  split  the integral    domain into two parts.

\noindent (i) The domain $ x \le z/2$ or $ x \ge 2z$.
 On this domain $R(\sqrt{t}x,\sqrt{t}z)$ is bounded. Thus
 \begin{equation}
\sup_{t\in[0,T]}\sup_{|z|\ge 1}\int_{ \left\{x \le z/2 \atop\  x \ge 2z\right\} } \lc 1\wedge |x|^{2H-2} \rc R(\sqrt{t}x,\sqrt{t}z)dx\ \leq \
 C_{T } \int_{\RR}   1\wedge |x|^{2H-2}  dx<\infty.
 \label{e.3.20}
 \end{equation}

\noindent (ii) The domain $  z/2\le  x \le 2z$.
 On this domain  we have $x\ge z/2\ge (z+1)/3\ge 1$  and then
 \[
 1\wedge |x|^{2H-2}\le |x|^{2H-2}\le \frac{3^{2-2H}}{(1+z)^{2-2H}}\,.
 \]
 Thus,
     \begin{align*}
     I:= &\int_{  \frac z2<x<2z } (1\wedge|x|^{2H-2}) R(\sqrt{t}x,\sqrt{t}z)dx \\
     \leq& C_H \lc\frac{1+\sqrt{t}z}{1+z}\rc^{2-2H} \int_{0}^{2z}\frac{1}{(1+\sqrt{t}|x-z|)^{2-2H}} dx  
     \,. 
   \end{align*}
By the symmetry    of the above integrand   we know that 
the integrals  $\int_0^z$ and 
$\int_z^{2z}$ are the same.  Hence, we have 
  \begin{align*}
     I 
     \leq& C_H \frac{ 1+(\sqrt{t}z)^{2-2H}}{1+z^{2-2H}}
     \int_{z}^{2z}\frac{1}{ \left(1+\sqrt{t}|x-z|\right)^{2-2H}} dx\\
      = & C_H \frac{ 1+(\sqrt{t}z)^{2-2H}}{\sqrt t\left(1+z^{2-2H}\right)}
   \left[1 - (1+\sqrt{t}z)^{2H -1} \right]\\
    \leq &C_H T^{\frac12-H}\frac{ 1+(\sqrt{t}z)^{2-2H}}{ \sqrt t z\left( 1+(\sqrt t z)^{1-2H} \right)  }
   \left[1 - (1+\sqrt{t}z)^{2H -1} \right]\,.
   \end{align*}
Consider now the function
\[
f(u)=\frac{ 1+u^{2-2H}}{ u(1+u^{1-2H})  }
   \left[1 - (1+u)^{2H -1} \right]\,, \quad u>0\,.
   \]
This is a continuous function on   $(0, \infty)$. When $u\rightarrow 0$
and when $u\rightarrow \infty$ we have
\[
\lim_{u\rightarrow 0+}f(u)=  1-2H\,,\quad \lim_{u\rightarrow \infty}f(u)=1\,.
\]
Thus,  $f(u)$ is bounded on $(0, \infty)$  and this   in turn proves
\begin{equation}
\sup_{t\in[0,T]}\sup_{z\ge 1}\int_{z/2\le  x \le   2z }
 \lc 1\wedge |x|^{2H-2} \rc R(\sqrt{t}x,\sqrt{t}z)dx\  <\infty.
 \label{e.3.21}
  \end{equation}
Combining    \eqref{e.3.19}-\eqref{e.3.20} together with our above symmetry argument  we prove  \eqref{e.3.18} and hence we complete   the proof of the lemma.
 \end{proof}
 \begin{remark}\label{RateLam}
   From this lemma,  we see why we choose 
    the above    decay rate for  our weight
   function. If we consider $\lambda(x)= (1+|x|^2)^{-{\lambda}}$ with $\lambda>1-H$, then for $|z|$ sufficiently large  one has
   \[
    \int_{\RR} \lc 1\wedge |x|^{2H-2} \rc R(x,z)dx\gtrsim \int_{|x-z|<1} |x|^{2H-2}R(x,z)dx\gtrsim \frac{(1+|z|)^{{\lambda}}}{|z|^{2-2H}},
   \]
   which diverges as $|z|\rightarrow \infty$. This elementary fact suggests  us   that $\lambda$ must be in
   $(\frac 12, 1-H]$, and it is obvious $L^p_{\lambda}(\RR)$ is the largest space when $\lambda=1-H$.
 \end{remark}

\section{Additive noise}\label{s.additive}
When the diffusion coefficient  $\sigma(t,x,u) =1$
(or  a general constant), the noise  is
additive and the  solution    to  
\eqref{SHE} can be written explicitly as
\begin{equation}
  u(t,x)= \int_\RR
  G_{t }(x-y) u_0(y) dy+\int_{0}^{t}\int_{\RR} G_{t-s}(x-y) W(ds,dy)\,, 
\end{equation}
where $G_t(x)=\frac{1}{\sqrt{4\pi t}} \exp\lt(-\frac{x^2}{4t}\rt)$ is the heat kernel. To focus on the stochastic part we assume $u_0=0$. Thus,  the  resulted   solution is written as
\begin{equation}\label{LinearMildSol}
  u_\affine (t,x)=  \int_{0}^{t}\int_{\RR} G_{t-s}(x-y) W(ds,dy)\,.
\end{equation}
This solution  $u_\affine (t,x)$ defines a (symmetric) centered Gaussian process.  We shall study how it grows as the parameters $t$ and $x$ go to infinity.  It is expected  that $u_\affine (t,x)$
is H\"older continuous in $t$ and $x$.
More precisely,  for any positive constants $\gamma<H$, $T, L\in (0, \infty)$, there is a constant $C_{T, L, \gamma}$, depending only on $T$,  $L$ and $\gamma$,
such that
\[
\sup_{0\le s, t\le T\,, |x|, |y|\le L}|u_\affine (s,x)-u_\affine (t,y)|\le C_{T, L, \gamma}\left(|t-s|^{\gamma/2}+|x-y|^{\gamma}\right)\,.
\]
We want to consider   the H\"older continuity of $u_\affine (t,x)$ on the whole space $\RR$.  Namely, we  want to know how the constant
$C_{T, L, \gamma}$ grows as  $T$ and $L$ go to infinity (for any fixed $\gamma$).

\subsection{Majorizing measure theorem}
To find the sharp bound for $C_{T, L, \gamma}$ we shall  utilize   Talagrand's majorizing measure theorem which we  recall   below.
%
%
 \begin{theorem} {\rm (Majorizing Measure Theorem,  see  e.g.
   \cite[Theorem 2.4.2]{Talagrand2014}).} \label{Talagrand}
 Let $T$ be a given set and 	let $\{X_t,t\in T\}$ be a centered Gaussian process indexed by $T$.  Denote
 	 $d(t,s)=(\EE|X_t-X_s|^2)^{\frac 12}$,
 	     the associated natural metric on $T$.
%
  Then
 	\begin{equation}
 	\EE\Blk \sup_{t\in T} X_t \Brk\asymp \gamma_2(T,d):=\inf_\cA \sup_{t\in T} \sum_{n\geq 0}2^{n/2} \diam(A_n(t))\,,
 	\end{equation}
 	where   the infimum is taken over all increasing sequence $\cA:=\{\cA_n, n=1, 2, \cdots\}  $ of partitions of $T$ such that $\#\cA_n  \leq 2^{2^n}$ ($\#A$ denotes the number of elements in the  set $A$),   $A_n(t)$ denotes the unique element of $\cA_n$ that contains $t$, and $\diam(A_n(t))$ is the diameter (with respect to the natural   distance $d$)  of $A_n(t)$.
 \end{theorem}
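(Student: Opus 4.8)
The plan is to establish the two–sided estimate $\EE\!\left[\sup_{t\in T}X_t\right]\asymp\gamma_2(T,d)$ by proving the upper and lower bounds separately, each up to a universal multiplicative constant. The upper bound $\EE[\sup_t X_t]\ls\gamma_2(T,d)$ rests on the \emph{generic chaining} argument, which is elementary once one has the Gaussian tail estimate; the lower bound $\EE[\sup_t X_t]\gs\gamma_2(T,d)$ is the substantially deeper direction and is the heart of Talagrand's theorem. Throughout I assume $(T,d)$ is separable and, by the usual reduction, that $T$ is finite, the general case following by monotone approximation.

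For the upper bound, first I would fix an arbitrary admissible sequence $\cA=\{\cA_n\}$ with $\#\cA_n\le 2^{2^n}$. In each cell $A\in\cA_n$ choose a representative point, and write $\pi_n(t)$ for the representative of the unique cell $A_n(t)\ni t$; since $\cA_0=\{T\}$ is trivial, $\pi_0\equiv t_0$ is a single base point. Because the partitions increase and eventually separate points, one has the telescoping identity
\begin{equation*}
X_t-X_{t_0}=\sum_{n\ge 1}\blc X_{\pi_n(t)}-X_{\pi_{n-1}(t)}\brc\,,
\end{equation*}
in which each increment has standard deviation $d(\pi_n(t),\pi_{n-1}(t))\le\diam(A_{n-1}(t))$. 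The key counting observation is that, as $t$ varies, the pair $(\pi_{n-1}(t),\pi_n(t))$ ranges over at most $\#\cA_{n-1}\cdot\#\cA_n\le 2^{2^{n+1}}$ values. Applying the Gaussian tail bound $\bP(|X_s-X_{s'}|\ge\tau\,d(s,s'))\le 2e^{-\tau^2/2}$ with $\tau=r\,2^{n/2}$ and taking a union bound over these pairs at each level, the counting factor $2^{2^{n+1}}$ is dominated by $e^{-\tau^2/2}$ once $r$ exceeds a universal constant; on the resulting event one has $\sup_t(X_t-X_{t_0})\le 2r\sup_t\sum_{n\ge0}2^{n/2}\diam(A_n(t))$, and integrating this tail bound in $r$ gives
\begin{equation*}
\EE\Blk\sup_{t\in T}(X_t-X_{t_0})\Brk\ls\sup_{t\in T}\sum_{n\ge 0}2^{n/2}\diam(A_n(t))\,.
\end{equation*}
Taking the infimum over all admissible $\cA$ yields $\EE[\sup_t X_t]\ls\gamma_2(T,d)$.

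For the lower bound I would follow Talagrand's partitioning scheme. The engine is \emph{Sudakov minoration}: if $T$ contains $m$ points that are mutually $\ep$-separated in $d$, then $\EE[\sup_t X_t]\gs\ep\sqrt{\log m}$. The strategy is to show that the functional $F(A):=\EE[\sup_{t\in A}X_t]$ satisfies a \emph{growth condition}: whenever a ball of radius $a$ is split into sufficiently many well-separated sub-balls of smaller radius, $F$ on the whole ball exceeds a term of order $a\,2^{n/2}$ plus the minimum of $F$ over the pieces. Granting this, one constructs the admissible sequence $\{\cA_n\}$ by downward induction on the functional value, at each level subdividing each current cell into pieces of controlled diameter while respecting the cardinality constraint $\#\cA_n\le 2^{2^n}$, and reads off that $\sup_t\sum_n 2^{n/2}\diam(A_n(t))\ls F(T)=\EE[\sup_t X_t]$.

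The main obstacle is precisely this lower bound. Verifying the growth condition for Gaussian processes requires combining Sudakov minoration with the Gaussian concentration inequality for the supremum, so that the contributions of disjoint pieces can be glued together without losing the $\sqrt{\log m}$ gain; and the inductive construction must balance the number of pieces against their diameters so that the bound $\#\cA_n\le 2^{2^n}$ is never violated while $F$ decreases by a definite amount at each level. Since in this paper the estimates of Theorem \ref{LUEESup} need \emph{both} directions, I would invoke the lower bound from \cite{Talagrand2014} as stated and carry out the chaining computation explicitly for the specific Gaussian field $u_{\affine}$, reducing everything to the metric estimates $d((t,x),(s,y))=\big(\EE|u_{\affine}(t,x)-u_{\affine}(s,y)|^2\big)^{1/2}$ furnished by the heat-kernel lemmas of Section \ref{lemmas}.
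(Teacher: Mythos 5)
The paper offers no proof of this statement at all: it is quoted verbatim as a known deep result, with the reference \cite[Theorem 2.4.2]{Talagrand2014} standing in for the argument, and it is then used as a black box in Section 3. Your proposal therefore cannot clash with the paper's route, and in fact it does strictly more: your generic chaining argument for the upper bound $\EE[\sup_{t\in T}X_t]\lesssim\gamma_2(T,d)$ is correct and complete in outline (the telescoping along representatives $\pi_n(t)$, the bound $d(\pi_n(t),\pi_{n-1}(t))\le\diam(A_{n-1}(t))$ coming from nestedness, the count of at most $2^{2^{n-1}}\cdot 2^{2^n}\le 2^{2^{n+1}}$ pairs per level, the choice $\tau=r2^{n/2}$ making the union bound summable for $r$ above a universal constant, and integration of the tail), modulo only the standard technicality that $X_{\pi_n(t)}\to X_t$ in $L^2$ when the functional is finite, which justifies the telescoping identity. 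For the lower bound you give an accurate roadmap (Sudakov minoration, Gaussian concentration, the growth condition on $F(A)=\EE[\sup_{t\in A}X_t]$, and the inductive partition construction) but do not execute it, deferring instead to \cite{Talagrand2014}; this is exactly the hard direction, and your treatment of it is in effect the same citation the paper makes for the whole theorem. What your approach buys is that the direction actually needed for the upper estimates in Theorems \ref{LUEESup}--\ref{LUtHolderEESup} becomes self-contained, while the deep direction (needed there only through Sudakov minoration, which the paper anyway states separately as Theorem \ref{Sudakov}) remains, legitimately, a citation; what the paper's pure-citation route buys is brevity, which is appropriate for a result of this depth used as a tool.
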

This theorem provides a powerful general principle for the study of the supremum of Gaussian process.

\begin{remark}
	The natural metric $d(t,s)$ is actually only a pseudo-metric because $d(t,s)=0$ does not necessarily imply $t=s$   (e.g. $X_t\equiv1$).  It is also   { called} the  canonical metric.
\end{remark}
It is more convenient for us  to use the following theorem to obtain the lower bound.
	\begin{theorem}  {\rm (Sudakov minoration  theorem,  see   e.g.  \cite[Lemma 2.4.2]{Talagrand2014}).} \label{Sudakov}
		Let $\{X_{t_i},i=1,\cdots,L\}$ be  { a} centered Gaussian family with natural distance $d$ and assume
		\[
		 \forall p,q \leq L,~p\neq q \Rightarrow d(t_p,t_q)\geq \delta.
		\]
		Then,    we have
		\begin{equation}
		\EE\Blc\sup_{1\leq i\leq L} X_{t_i} \Brc \geq \frac{\delta}{C} \sqrt{\log_2(L)},
		\end{equation}
		where $C$ is a    universal constant.
	\end{theorem}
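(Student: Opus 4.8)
The plan is to deduce the minoration from a Gaussian comparison (Sudakov--Fernique) inequality together with the elementary lower bound for the expected maximum of independent standard Gaussians. First I would recall the Sudakov--Fernique comparison in the form: if $\{X_i\}$ and $\{Y_i\}$ are centered Gaussian families with $\EE[(X_i-X_j)^2]\ge\EE[(Y_i-Y_j)^2]$ for all $i,j$, then $\EE[\max_i X_i]\ge\EE[\max_i Y_i]$. This is itself proved by interpolating $Z_i(u)=\sqrt{u}\,X_i+\sqrt{1-u}\,Y_i$ and differentiating $\EE[\max_i Z_i(u)]$ in $u$ via Gaussian integration by parts; I would treat it as a known input.

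Next I would construct the comparison family. Let $g_1,\dots,g_L$ be i.i.d.\ standard Gaussians and set $Y_i=\frac{\delta}{\sqrt{2}}g_i$. Then for $p\ne q$ one has $\EE[(Y_p-Y_q)^2]=\frac{\delta^2}{2}\EE[(g_p-g_q)^2]=\delta^2$, while the hypothesis gives $\EE[(X_{t_p}-X_{t_q})^2]=d(t_p,t_q)^2\ge\delta^2$. Hence the increment inequality of Sudakov--Fernique holds with $X_i=X_{t_i}$, and therefore $\EE[\sup_i X_{t_i}]\ge\EE[\max_i Y_i]=\frac{\delta}{\sqrt{2}}\EE[\max_i g_i]$.

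It then remains to show $\EE[\max_{1\le i\le L}g_i]\ge c\sqrt{\log L}$ for a universal $c>0$ and $L\ge 2$. I would pick $u=u_L$ with $\Pr(g_1>u)=1/L$; the Gaussian tail estimate gives $u\asymp\sqrt{\log L}$, and by independence $\Pr(M>u)=1-(1-1/L)^L\ge 1-e^{-1}$, where $M:=\max_i g_i$. Writing $M=M^+-M^-$, the positive part obeys $\EE[M^+]\ge u\,\Pr(M>u)\ge(1-e^{-1})u$, while $M\ge g_1$ forces $M^-\le g_1^-$ and hence $\EE[M^-]\le\EE[g_1^-]=(2\pi)^{-1/2}$. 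Therefore $\EE[M]\ge(1-e^{-1})u-(2\pi)^{-1/2}\gtrsim\sqrt{\log L}$. Combining with the previous display yields $\EE[\sup_i X_{t_i}]\ge\frac{\delta}{C}\sqrt{\log L}=\frac{\delta}{C'}\sqrt{\log_2 L}$, the change of base of the logarithm being absorbed into the constant.

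The main obstacle is the Sudakov--Fernique comparison step, whose proof rests on the Gaussian interpolation/integration-by-parts identity and is the only genuinely nontrivial ingredient; the remaining pieces (verifying the increment inequality and bounding $\EE[\max_i g_i]$) are routine. Since the statement is quoted from \cite{Talagrand2014}, one may alternatively invoke it directly, but the comparison route above is the cleanest self-contained derivation and makes transparent where the separation $\delta$ and the cardinality $L$ enter the final bound.
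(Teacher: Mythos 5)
The paper does not prove this statement at all: it is quoted as a known external result, with the proof deferred to Talagrand's book \cite{Talagrand2014}, and is then used as a black box to establish the lower bounds in Theorems \ref{LUEESup}--\ref{LUtHolderEESup} and Proposition \ref{Prop.SupN}. Your proposal therefore supplies something the paper deliberately omits, and it does so correctly: the route via the Sudakov--Fernique comparison (applied to the i.i.d.\ family $Y_i=\tfrac{\delta}{\sqrt 2}g_i$, whose increments satisfy $\EE[(Y_p-Y_q)^2]=\delta^2\le d(t_p,t_q)^2$) combined with the elementary tail bound $\EE[\max_{i\le L}g_i]\gtrsim\sqrt{\log L}$ is the standard textbook derivation, and each step you give is sound. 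Two small points deserve care if you write this out in full. First, your quantile argument degenerates for small $L$ (e.g.\ for $L=2$ the level $u_L$ with $\Pr(g_1>u_L)=1/L$ is $u_L=0$, so $(1-e^{-1})u_L-(2\pi)^{-1/2}$ is negative); this is repaired by noting $\EE[\max_{i\le L}g_i]\ge\EE[\max(g_1,g_2)]=\pi^{-1/2}$ for all $L\ge 2$ and using the asymptotic bound only for $L$ beyond a universal threshold, adjusting the constant $C$ accordingly. Second, the Sudakov--Fernique inequality itself (your only nontrivial input) needs either a citation or the interpolation proof you sketch, with the usual smoothing of the maximum; as you say, this is where all the real work lives. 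With those caveats your argument is a complete and self-contained substitute for the citation.
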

The following ``concentration of measure" type theorem  allows  us to obtain deviation inequalities for the supremum of  { a} Gaussian family.
 \begin{theorem} {\rm (Borell, see   e.g.   \cite[Theorem 2.1]{adler}).}   \label{Borell} { Let  $\{X_t,t\in T\}$  be a centered separable   Gaussian process   on some topological index set $T$
 with almost surely bounded sample paths.}   Then    $\EE\Blc\sup_{t\in  T} X_t\Brc<\infty$,  and for all $\lambda>0$
	\begin{equation}
	\bP\lt\{\lt|\sup_{t\in T} X_t-\EE\Blc\sup_{t\in T} X_t\Brc\rt|>\lambda\rt\}\leq 2\exp\lc-\frac {\lambda^2}{2\sigma_T^2}\rc,
	\end{equation}
	where $\sigma_T^2:=\sup_{t\in T}\EE(X_t^2)$.
 \end{theorem}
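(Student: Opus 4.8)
The plan is to reduce the statement to a finite-dimensional Gaussian concentration estimate and then pass to the limit. First I would use separability: there is a countable set $T_0=\{t_1,t_2,\dots\}\subset T$ with $\sup_{t\in T}X_t=\sup_{t\in T_0}X_t$ almost surely (this is where the a.s. bounded sample paths hypothesis enters). Setting $S_n:=\sup_{1\le i\le n}X_{t_i}$, we have $S_n\uparrow \sup_{t\in T}X_t$ monotonically. Thus it suffices to prove the two-sided deviation bound for each $S_n$ with constants independent of $n$, and to show $\EE[\sup_{t\in T}X_t]<\infty$ so that the limit makes sense.

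For fixed $n$, let $\Sigma$ be the covariance matrix of $(X_{t_1},\dots,X_{t_n})$ and represent this vector as $\Sigma^{1/2}g$ for a standard Gaussian vector $g$ in $\RR^n$. Then $S_n=F(g)$ with $F(x)=\max_{1\le i\le n}\langle a_i,x\rangle$, where $a_i$ is the $i$-th row of $\Sigma^{1/2}$. A direct estimate gives $|F(x)-F(y)|\le \max_i|a_i|\,|x-y|$, and since $|a_i|^2=\Sigma_{ii}=\EE[X_{t_i}^2]\le \sigma_T^2$, the map $F$ is Lipschitz with constant at most $\sigma_T$. The key analytic input is then the Gaussian concentration inequality for Lipschitz functions: if $g$ is standard Gaussian and $F$ is $L$-Lipschitz, then
\[
\bP\lc |F(g)-\EE F(g)|>\lambda\rc\le 2\exp\lc-\frac{\lambda^2}{2L^2}\rc\,.
\]
Applying this with $L=\sigma_T$ gives the desired bound for $S_n$ with $n$-independent constants.

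The cleanest route to the Lipschitz concentration bound is through the Gaussian isoperimetric inequality: among all Borel sets of prescribed Gaussian measure, half-spaces minimize the Gaussian measure of their $\varepsilon$-enlargements. Applied to the superlevel and sublevel sets of $F$, this produces one-sided concentration around a median $m_n$ of $S_n$, namely $\bP(S_n\ge m_n+\lambda)\le \bar\Phi(\lambda/\sigma_T)$ and symmetrically for the lower tail, with $\bar\Phi$ the standard normal tail. Since $\bar\Phi(u)\le \tfrac12 e^{-u^2/2}$, these uniform sub-Gaussian tails make $\EE|S_n-m_n|$ bounded independently of $n$; together with boundedness of $m_n$ (the $S_n$ increase to an a.s. finite limit) this yields $\sup_n\EE[S_n]<\infty$, hence $\EE[\sup_{t\in T}X_t]<\infty$ by monotone convergence, and $|\EE[S_n]-m_n|$ bounded. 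Converting concentration around the median to concentration around the mean and then letting $n\to\infty$ (using $S_n\to\sup_t X_t$ a.s. and in $L^1$, so $\EE[S_n]\to\EE[\sup_t X_t]$) gives the stated two-sided inequality.

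The main obstacle is the Gaussian isoperimetric inequality, which carries all of the geometric content; everything else is bookkeeping (the reduction by separability, the computation of the Lipschitz constant, and the transfer of concentration from median to mean and across the limit). An alternative, more self-contained path achieving the same sharp constant avoids isoperimetry by combining the Gaussian logarithmic Sobolev inequality $\mathrm{Ent}(f^2)\le 2\EE|\nabla f|^2$ with Herbst's argument: applying it to $f=e^{\theta F/2}$ and integrating the resulting differential inequality for $\theta\mapsto \log\EE e^{\theta(F-\EE F)}$ yields $\EE e^{\theta(F-\EE F)}\le e^{\theta^2\sigma_T^2/2}$, after which a Chernoff bound produces the tail. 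The only cost of this route is a routine smoothing of the Lipschitz maximum $F=\max_i\langle a_i,\cdot\rangle$ into the $C^1$ setting, plus the same limiting justifications, which are again controlled by the uniform sub-Gaussian tails.
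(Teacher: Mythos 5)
The paper does not prove this statement at all: it is quoted as a classical result (the Borell--Tsirelson--Ibragimov--Sudakov inequality) with a citation to Adler's lecture notes, and it is then used as a black box in Section 3. So there is no internal proof to compare yours against; what you have written is a from-scratch proof of the cited theorem. Your skeleton is the standard one and most of it is sound: the separability reduction $S_n:=\max_{i\le n}X_{t_i}\uparrow\sup_{t\in T}X_t$, the representation $S_n=F(g)$ with $F(x)=\max_i\langle a_i,x\rangle$ Lipschitz with constant $\max_i|a_i|\le\sigma_T$ (since $|a_i|^2=\Sigma_{ii}=\EE[X_{t_i}^2]$), the use of uniform sub-Gaussian tails plus boundedness of the medians to get $\EE\Blc\sup_{t\in T}X_t\Brc<\infty$, and the passage to the limit using $S_n\to\sup_t X_t$ almost surely together with $\EE S_n\to\EE\blk\sup_t X_t\brk$ are all correct.

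The gap is in your primary route, namely the sentence ``Converting concentration around the median to concentration around the mean \dots gives the stated two-sided inequality.'' The isoperimetric inequality gives the sharp tail around the \emph{median} $m_n$: $\bP(|S_n-m_n|>\lambda)\le 2\bar\Phi(\lambda/\sigma_T)\le e^{-\lambda^2/(2\sigma_T^2)}$. But recentering at the mean costs a shift $a_n:=|\EE S_n-m_n|$, which is in general strictly positive, and the resulting bound $\bP(|S_n-\EE S_n|>\lambda)\le e^{-(\lambda-a_n)^2/(2\sigma_T^2)}$ is \emph{not} dominated by $2e^{-\lambda^2/(2\sigma_T^2)}$ once $\lambda> a_n/2+\sigma_T^2\ln 2/a_n$: a shifted sub-Gaussian bound always loses against the unshifted one for large $\lambda$. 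So the median-to-mean conversion cannot deliver the constant in the statement; the sharp inequality centered at the mean is genuinely a different (and harder) fact --- this is exactly the content of the Tsirelson--Ibragimov--Sudakov contribution. Consequently, your ``alternative'' route is not optional but necessary: the log-Sobolev/Herbst derivation of $\EE e^{\theta(F-\EE F)}\le e^{\theta^2\sigma_T^2/2}$ followed by a two-sided Chernoff bound gives the stated inequality for each $S_n$ with $n$-independent constants, and then your limiting argument finishes the proof. With the Herbst argument promoted to the main step (keeping the median concentration only where it suffices, namely for the finiteness of $\EE\Blc\sup_{t\in T}X_t\Brc$), the proof is complete and correct.
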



We have the following observation which can be deduced immediately from    \cite[Lemma 2.2.1]{Talagrand2014}. This simple fact tells us $\EE[\sup_{t\in T} |X_{t}|]\simeq \EE[\sup_{t\in T} X_{t} ]$. So, 
 we only need to consider $\EE[\sup_{t\in T} X_{t} ]$.
\begin{lemma}\label{equiv}
  If the process $\{X_{t},t\in T\}$ is symmetric,   then we have
  \begin{equation}
    \EE\big[\sup_{t\in T} |X_{t}|\big]\leqslant 2\EE\big[\sup_{t\in T} X_{t} \big]+\inf_{t_0\in T}\EE\big[|X_{t_0}|\big]\,.
  \end{equation}
\end{lemma}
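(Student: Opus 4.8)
The plan is to reduce the two-sided supremum to the one-sided supremum $\EE[\sup_{t\in T}X_t]$ by exploiting symmetry, and to absorb the residual term using a single base point $t_0$. First I would record the elementary identity, valid for any symmetric process with suitably well-behaved sample paths,
\[
\EE\Big[\sup_{s,t\in T}(X_s-X_t)\Big]=2\,\EE\Big[\sup_{t\in T}X_t\Big]\,.
\]
This holds because $\sup_{s,t\in T}(X_s-X_t)=\sup_{s\in T}X_s+\sup_{t\in T}(-X_t)$, and by symmetry the family $\{-X_t\}$ has the same law as $\{X_t\}$, so the two summands have equal expectation.

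Next I would fix an arbitrary $t_0\in T$ and use the pointwise triangle inequality $|X_t|\le|X_t-X_{t_0}|+|X_{t_0}|$, which upon taking the supremum gives $\sup_{t\in T}|X_t|\le\sup_{t\in T}|X_t-X_{t_0}|+|X_{t_0}|$. The key observation is that
\[
\sup_{t\in T}|X_t-X_{t_0}|\le\sup_{s,t\in T}|X_s-X_t|=\sup_{s,t\in T}(X_s-X_t)\,,
\]
where the last equality holds because interchanging the roles of $s$ and $t$ reverses the sign of the increment, so the supremum of the absolute value over all ordered pairs coincides with the supremum of the signed difference.

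Combining the two displays and taking expectations yields $\EE[\sup_{t\in T}|X_t|]\le2\,\EE[\sup_{t\in T}X_t]+\EE[|X_{t_0}|]$, and since $t_0\in T$ was arbitrary I would take the infimum over $t_0$ to obtain the claimed bound.

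This argument is essentially careful bookkeeping of suprema, so there is no serious analytic obstacle; the only point meriting attention is the well-definedness, measurability, and integrability of the suprema $\sup_{t\in T}X_t$, $\sup_{t\in T}|X_t|$, and $\sup_{s,t\in T}(X_s-X_t)$. In our setting this is guaranteed by separability together with the almost-sure boundedness of the sample paths, which (in the Gaussian case) also furnishes finiteness of the expectations via Borell's theorem (Theorem \ref{Borell}); these are precisely the hypotheses that justify interchanging expectations with the pointwise inequalities above.
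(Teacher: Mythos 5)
Your proof is correct and takes essentially the same route as the paper: the paper gives no argument of its own but says the lemma "can be deduced immediately from \cite[Lemma 2.2.1]{Talagrand2014}", and that cited lemma is exactly your identity $\EE\big[\sup_{s,t\in T}(X_s-X_t)\big]=2\,\EE\big[\sup_{t\in T}X_t\big]$, with your triangle-inequality/base-point step being the intended immediate deduction. The only difference is that you reprove the cited identity (correctly) instead of quoting it.
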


\subsection{Asymptotics of the  Gaussian solution}
For the mild solution $u_\affine (t,x)$ to \eqref{SHE} with additive noise (e.g. $\sigma(t,x,u)=1$),  defined by \eqref{LinearMildSol},  we  shall first obtain the sharp upper and lower bounds for its associated natural metric:
\begin{equation}\label{NaturalMetric}
   d_1((t,x),(s,y))= \sqrt{\EE|u_\affine (t,x)-u
   _\affine (s,y)|^2}\,,
\end{equation}

The following lemma  gives a sharp bounds for   this induced natural metric for the Gaussian solution $u_\affine(t,x)$.
 \begin{lemma}\label{LemMetricProp}
Let  $d_1((t,x),(s,y))$  be    the natural metric  defined by  \eqref{NaturalMetric}. Then, there are   positive  constants $c_{H},C_H $  such that
   \begin{equation}\label{LUofNaturalMetric}
   \begin{split}
      c_{H}(|x-y|^H\wedge (t\wedge s)^{\frac H2}+|t-s|^{\frac H2})&\leq d_1((t,x),(s,y)) \\
        & \leq C_{H}(|x-y|^H\wedge (t\wedge s)^{\frac H2}+|t-s|^{\frac H2})
   \end{split}
   \end{equation}
 for any  $(t,x),(s,y)\in \RR_+\times\RR$.
 \end{lemma}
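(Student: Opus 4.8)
The plan is to evaluate $d_1^2$ through the Wiener isometry \eqref{Isometry} and the spectral form \eqref{hinner.1} of the $\HH$-norm, and then to split the resulting integral into a temporal and a spatial contribution. Assume without loss of generality $s\le t$ and write $\delta=t-s$, $h=x-y$. Since $u_\affine(t,x)-u_\affine(s,y)=\int g\,dW$ with $g(r,z)=G_{t-r}(x-z)\1_{[0,t]}(r)-G_{s-r}(y-z)\1_{[0,s]}(r)$, the spatial Fourier transform is $\cF g(r,\xi)=e^{-i\xi x}e^{-(t-r)\xi^2}\1_{[0,t]}(r)-e^{-i\xi y}e^{-(s-r)\xi^2}\1_{[0,s]}(r)$, so that
\[
d_1^2=c_{1,H}\int_0^\infty\!\!\int_\RR |\cF g(r,\xi)|^2|\xi|^{1-2H}\,d\xi\,dr.
\]
The decisive structural feature is that the time variable is merely integrated and the integrand is nonnegative in $r$, so I may freely split the $r$-integral and discard pieces; this is what makes both directions tractable.

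For the upper bound I use the triangle inequality $d_1((t,x),(s,y))\le d_1((t,x),(s,x))+d_1((s,x),(s,y))$. Splitting $\int_0^\infty=\int_0^s+\int_s^t$ in the temporal term, carrying out the $\int_0^a e^{-2a\xi^2}da$ integration and rescaling $\xi\mapsto\xi/\sqrt\delta$, everything reduces to one–dimensional integrals that converge for $0<H<\tfrac12$ (such as $\int_\RR(1-e^{-u^2})^2|u|^{-1-2H}\,du$), giving $d_1((t,x),(s,x))^2\asymp\delta^H$. At fixed time $s$ one computes $d_1((s,x),(s,y))^2=c_{1,H}\int_\RR(1-\cos\xi h)\tfrac{1-e^{-2s\xi^2}}{\xi^2}|\xi|^{1-2H}\,d\xi$, and bounding either $1-e^{-2s\xi^2}\le1$ or $1-\cos\xi h\le2$ followed by rescaling yields $\lesssim|h|^{2H}$ and $\lesssim s^H$ respectively, hence $\lesssim|h|^{2H}\wedge s^H$. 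Because $s=t\wedge s$, this is exactly the claimed upper bound in \eqref{LUofNaturalMetric}.

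For the lower bound I establish two separate estimates on $d_1^2$ and combine them through $\sqrt{A+B}\asymp\sqrt A+\sqrt B$. The temporal bound is immediate: restricting the $r$-integral to $[s,t]$, where the $\1_{[0,s]}$-term vanishes, gives $d_1^2\ge c_{1,H}\int_s^t\!\int_\RR e^{-2(t-r)\xi^2}|\xi|^{1-2H}\,d\xi\,dr=c_H\delta^H$. For the spatial bound I restrict instead to $r\in[0,s]$ and use the identity $|e^{-i\xi x}e^{-\delta\xi^2}-e^{-i\xi y}|^2=(1-e^{-\delta\xi^2})^2+2e^{-\delta\xi^2}(1-\cos\xi h)$ to drop the first nonnegative term, obtaining
\[
d_1^2\ge c_{1,H}\int_\RR \frac{(1-e^{-2s\xi^2})e^{-\delta\xi^2}}{\xi^2}(1-\cos\xi h)|\xi|^{1-2H}\,d\xi .
\]
I then show $d_1^2\gtrsim|h|^{2H}\wedge s^H$. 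If $|h|\le\sqrt s$ and $\delta\le|h|^2$, restricting to the shell $|\xi|\in[1/|h|,2/|h|]$ (on which $1-e^{-2s\xi^2}\gtrsim1$, $e^{-\delta\xi^2}\gtrsim1$, and $\int(1-\cos\xi h)\,d\xi\gtrsim|h|^{-1}$) gives $\gtrsim|h|^{2H}$. If $|h|>\sqrt s$ and $\delta\le s$, a thin shell is defeated by oscillation, so I rescale $\xi=u/\sqrt s$, set $\mu=\delta/s\in[0,1]$, $\rho=h/\sqrt s$ with $|\rho|\ge1$, and reduce to $\Theta(\mu,\rho):=\int_\RR(1-e^{-2u^2})e^{-\mu u^2}(1-\cos\rho u)|u|^{-1-2H}\,du\gtrsim1$ uniformly; whenever the smallness hypothesis on $\delta$ fails the temporal bound $c_H\delta^H$ already exceeds the spatial target. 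Combining the two estimates yields $d_1^2\gtrsim(|h|^{2H}\wedge s^H)+\delta^H$, and taking square roots produces the desired lower bound in \eqref{LUofNaturalMetric}.

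The main obstacle is the spatial lower bound when the two times differ, since the extra Gaussian factor $e^{-\delta\xi^2}$ damps precisely the high frequencies that encode the spatial increment, and in the large–separation regime $|h|>\sqrt s$ no single frequency shell survives the oscillation of $1-\cos\rho u$. The crux is therefore the uniform (in $\mu\in[0,1]$) bound $\Theta(\mu,\rho)\gtrsim1$: I expect to prove it from positivity and continuity of $\Theta$ on the compact set $[0,1]\times[1,M]$ together with a uniform Riemann–Lebesgue argument, noting that $\{(1-e^{-2u^2})e^{-\mu u^2}|u|^{-1-2H}\}_{\mu\in[0,1]}$ is compact in $L^1(\RR)$, so $\Theta(\mu,\rho)\to\Theta_\infty(\mu)\ge\Theta_\infty(1)>0$ uniformly in $\mu$ as $\rho\to\infty$. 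Everything else is scaling together with convergence of explicit one–dimensional integrals, all valid for $\tfrac14<H<\tfrac12$.
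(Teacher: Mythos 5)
Your proposal is correct, and its upper-bound half follows essentially the paper's own path: spectral (Plancherel/Wiener isometry) representation of $d_1^2$, triangle inequality, the fixed-time increment bounded by $C_H(|x-y|^{2H}\wedge s^{H})$ and the fixed-point increment by $C_H(t-s)^H$. Where you genuinely diverge is the spatial lower bound, which is the crux of the lemma. The paper keeps the full bracket and, after rescaling $\xi\to\xi/|x-y|$, bounds $\bigl[1-e^{-(t-s)\xi^2/|x-y|^2}\cos\xi\bigr]^2\ge 1$ on the frequency bands $\bigcup_{n}[2n\pi+\tfrac{\pi}{2},\,2n\pi+\tfrac{3\pi}{2}]$ where $\cos\xi\le 0$; the damping factor $e^{-(t-s)\xi^2}$ therefore never hurts, so no case analysis in $t-s$ is needed at all. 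The case $|x-y|\le\sqrt{s}$ is then immediate, and the case $|x-y|>\sqrt{s}$ is handled by summing bands above an explicit threshold index $n_0\asymp |x-y|/\sqrt{s}$, which yields the $s^H$ bound with explicit constants. You instead discard the nonnegative term $(1-e^{-\delta\xi^2})^2$, keep the damped oscillation $e^{-\delta\xi^2}(1-\cos\xi h)$, and compensate for the damping with a four-case analysis in $\delta=t-s$ (falling back on the temporal bound $\delta^H$ whenever $\delta$ is large relative to $|h|^2$ or $s$) together with a uniform positivity claim for the scale-invariant quantity $\Theta(\mu,\rho)$, proved via $L^1$-compactness of $\{g_\mu\}_{\mu\in[0,1]}$ and a uniform Riemann--Lebesgue argument. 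Both routes are sound. The paper's sign trick buys explicit constants, no soft arguments, and uniformity in $t-s$ for free; your route buys modularity (a clean reduction to positivity of $\Theta$ on $[0,1]\times[1,\infty)$ and a transparent separation of temporal and spatial effects) at the price of a non-quantitative compactness step and the intertwined case analysis. If you write it up, make explicit the pointwise positivity $\Theta(\mu,\rho)>0$ for each fixed $\rho\ne 0$ (the integrand is nonnegative and strictly positive off a null set), which is what the compactness argument on $[0,1]\times[1,M]$ actually needs.
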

 \begin{remark} The above property of  the natural metric can also be written as
   \begin{equation}\label{EqvMetric}
   	d_1((t,x),(s,y))\asymp d_{1,H}((t,x), (s, y)):=|x-y|^H\wedge (t\wedge s)^{\frac H2}+|t-s|^{\frac H2}.
   \end{equation}
   $d_{1,H}((t,x), (s, y))$ is no longer a distance but it is very convenient for us to obtain   the  desired results.
 \end{remark}

\begin{proof} Without loss of generality, let us assume $t>s$. Plancherel's identity and the independence of the    stochastic integrals over the time intervals $[0, s]$ and $[s, t]$  give
\begin{align}
    d_1^2((t,x),&(s,y))= \EE|u_\affine (t,x)-u_\affine (s,y)|^2 \nonumber\\
    =& \EE\lt| \int_{0}^{s}\int_{\RR}[G_{t-r}(x-z)-G_{s-r}(y-z)] W(dr,dz) \rt|^2  \nonumber\\
    &+ \EE\lt| \int_{s}^{t}\int_{\RR}G_{t-r}(x-z) W(dr,dz) \rt|^2 \nonumber\\
    =& \int_{\RR_+} [1-\exp(-2s\xi^2)][1+\exp(-2(t-s)\xi^2) \label{NMPlancherel}\\
    &\qquad -2\exp(-(t-s)\xi^2)\cos(|x-y|\xi)]\cdot \xi^{-1-2H}d\xi
    + 2^{H-1}\kappa_H (t-s)^{H}\,, \nonumber
\end{align}
where $\kappa_H=H^{-1} \Gamma(1-H)$ is a positive constant.
We start to obtain  the upper bound of \eqref{LUofNaturalMetric}.  The triangular inequality gives
	\begin{equation}\label{UConstant}
     d_1 ((t,x), (s,y))\le   d_1 ((t,x), (s,x))+  d_1 ((s,x), (s,y))\,.
	\end{equation}

	Let us deal with the two terms on the right hand side of the above inequality  separately. For the first term, Plancherel's identity
    \eqref{NMPlancherel} implies
	\begin{equation*}
		\begin{split}
		d_1^2((t,x) ,(s,x))
				&=\kappa_H\lk 2^{H-1}t^{H}+2^{H-1}s^{H}-(t+s)^{H}\rk+(2^{H-1}+1)\kappa_H(t-s)^{H} \\
				&\leq C_{H} (t-s)^{H}\,,
		\end{split}
	\end{equation*}
	because $2^{H-1}t^{H}+2^{H-1}s^{H}-(t+s)^{H}\leq 0$ when $t\geq s$. Again from \eqref{NMPlancherel}, the second term
	on the right hand side of \eqref{UConstant}
	is given by
    \begin{equation*}
      \begin{split}
         d_1^2((s,x),(s,y))
           &=\int_{0}^{s}\int_{\RR} \exp[-2(s-r)\xi^2]\cdot|\xi|^{1-2H}|1-\cos(\xi|x-y|)| d\xi dr \\
           &=C_H|x-y|^{2H}\int_{\RR_+}\lt[1-\exp\lc-\frac{2s\xi^2}{|x-y|^2}\rc\rt]\cdot \xi^{-1-2H}[1-\cos(\xi)] d\xi\,,
      \end{split}
    \end{equation*}
    which can be controlled by $C_H |x-y|^{2H}$.  On the other hand, we have
    \begin{align*}
      d_1^2((s,x),(s,y))
      =&\EE[|u_\affine (s,x)-u_\affine (s,y)|^2]\\
      \leq& 2\blc\EE[|u_\affine (s,x)|^2]+\EE[|u_\affine (s,y)|^2]\brc \leq C_H s^H\,.
    \end{align*}
    Thus,   the quantity of $d_1^2((s,x),(s,y))$ is bounded by  the minimum of $C_H |x-y|^{2H}$ and $C_H s^H$. We can summarize the above argument as
    \begin{equation}\label{eq.UMetric2}
      d_1((t,x),(s,y)) \leq C_{H}(|x-y|^H\wedge s^{\frac H2}+(t-s)^{\frac H2}) \,,
    \end{equation}
    which is the upper bound part of \eqref{LUofNaturalMetric}.

    Now we turn to  show the lower bound part of \eqref{LUofNaturalMetric}.  From Plancherel's identity    it is sufficient to bound   the   first summand in \eqref{NMPlancherel}  from below by  $c_H (|x-y|^H\wedge s^{\frac H2})$ for some constant $c_H>0$. We denote this first summand by $I$:
    \begin{align}
     I := &  \int_{\RR} [1-\exp( -2s\xi^2)][1+\exp(-2(t-s)\xi^2) \nonumber\\
      &-2\exp(-(t-s)\xi^2)\cos(|x-y|\xi)]|\xi|^{1-2H}d\xi \nonumber\\
       =&c |x-y|^{2H}\int_{\RR_+}\lk 1-\exp\lc-\frac{2s\xi^2}{|x-y|^2}\rc\rk\cdot \xi^{-1-2H} \label{eq.M2.1} \\
      &\qquad\qquad\qquad \cdot \lk 1-\exp\lc-\frac{(t-s)\xi^2}{|x-y|^2}\rc\cos(\xi) \rk^2 d\xi \,. \nonumber
    \end{align}
    To bound it from below, we divide our argument into  two cases:
    \[
     |x-y|>\sqrt{s}\qquad{\rm and}\qquad |x-y|\leq\sqrt{s}\,.
    \] 
    When $|x-y|\leq\sqrt{s}$, we can bound \eqref{eq.M2.1} from 
    below by
	\begin{align}
      I   &\geq c_H |x-y|^{2H} \sum_{n=1}^{\infty} \int_{2n\pi+\frac{\pi}{2}}^{2n\pi+\frac{3\pi}{2}}\blk1-\exp\lc-2\xi^2\rc\brk\cdot \xi^{-1-2H} d\xi \nonumber\\
        & \geq c_H |x-y|^{2H}\,, \label{eq.LM2.1}
	\end{align}
	since $1-\exp(-2s\xi^2/|x-y|^2)\geq 1-\exp(-2\xi^2)$ by the assumption and $\cos(\xi)$ is negative on the intervals
    $\bigcup_{n=1}^{\infty}[2n\pi+\frac{\pi}{2},2n\pi+\frac{3\pi}{2}]$.

    The case $|x-y|>\sqrt{s}$ is a little bit more involved. Denote
	\[
	 n_0:=\inf\left\{n\in \bN_0:2n\pi+\frac{\pi}{2}\geq \sqrt{\frac{-\ln(1-c^*)}{2s}}|x-y|\right\}
	\]
    with the choice   $c^*=1-\exp(-\pi^2/2)$ such that
	\[
    \sqrt{\frac{-\ln(1-c^*)}{2s}}|x-y|\geq \frac{\pi}{2}\,.
    \]
    It is then easy to see that $n_0$ is a  well defined finite positive integer. This way,  we   have the lower bound   for
    \eqref{eq.M2.1}:
	\begin{equation*}
	\begin{split}
    I\ge &\sum_{n=n_0}|x-y|^{2H}\int_{2n\pi+\frac{\pi}{2}}^{2n\pi+\frac{3\pi}{2}}\lt[1-\exp\lc-\frac{2s\xi^2}{|x-y|^2}\rc\rt]
    \cdot\xi^{-1-2H}d\xi \\
	 \geq& c^*|x-y|^{2H}\sum_{n\geq n_0}\int_{2n\pi+\frac{\pi}{2}}^{2n\pi+\frac{3\pi}{2}} \xi^{-1-2H} d\xi\geq \frac{c^*}{2}|x-y|^{2H}
    \int_{2n_0\pi+\frac{\pi}{2}}^{\infty} \xi^{-1-2H} d\xi,
	\end{split}
	\end{equation*}
    where 	the last inequality follows from the fact that   $\xi^{-1-2H}$ is a decreasing function  on $(0, \infty)$.
    From the definition of $n_0$,   it follows
 	\begin{align}
	 I  &\geq c_H |x-y|^{2H}\lc\sqrt{\frac{-\ln(1-c^*)}{2s}}|x-y|+2\pi\rc^{-2H}\geq c_H s^H \label{eq.LM2.2}
	\end{align}
    since $|x-y|>\sqrt{s}$ and consequently
    \begin{multline*}
      |x-y|^{2H}\lc \sqrt{\frac{-\ln(1-c^*)}{2s}}|x-y|+2\pi \rc^{-2H} \\
      = \lc\sqrt{\frac{-\ln(1-c^*)}{2s}}+\frac{2\pi}{|x-y|}\rc^{-2H} \geq \lc\sqrt{\frac{-\ln(1-c^*)}{2s}}+\frac{2\pi}{\sqrt{s}} \rc^{-2H}=c_H s^H\,.
    \end{multline*}
    Thus,  \eqref{eq.LM2.1} together with \eqref{eq.LM2.2} imply  
    \begin{equation}\label{eq.LMetric2}
       { d_1((t,x),(s,y))\geq c_H(|x-y|^H\wedge s^{\frac H2}+(t-s)^{\frac H2}).}
    \end{equation}
	Combining \eqref{eq.UMetric2} and \eqref{eq.LMetric2}, we   complete  the proof of this lemma.
\end{proof}

\medskip
Now we are ready  to prove  Theorem \ref{LUEESup}, which gives a sharp bound for
$$ \EE \left[ \sup_{{0\le t\le T\atop  -L\le x\le L}} | u_\affine (t,x)|\right]\,.$$

\begin{proof}[Proof   of the first part of Theorem \ref{LUEESup}]
  To simplify notation we denote
\[
 \bT=[0,T]\quad{\rm and}\quad   \bL=\LL\,.
\]
Since $u_\affine (t,x)$ is a symmetric and  centred Gaussian process   Lemma \ref{equiv} states that
\begin{equation}\label{supE}
   \EE \lk\sup_{(t,x)\in\bT\times\bL} |u_\affine (t,x)|\rk\es \EE \lk\sup_{(t,x)\in\bT\times\bL} u_\affine (t,x)\rk\,.
\end{equation}
Hence, to show \eqref{SupEEasmp} it is equivalent to
show
\begin{equation}
 { c_H\, \Uppsi(T, L)\le
\EE \lk\sup_{t\in \bT, x\in \bL}  u_\affine (t,x) \rk \le C_H\,  \Uppsi(T, L)\,,}
 \label{SupEEasmp.a}
    \end{equation}
    where $\Uppsi(T, L)$ is defined by \eqref{e.def_rho}.
We shall prove the upper  and lower bound parts of \eqref{SupEEasmp.a} separately.
 Let us first  consider the upper bound part  in \eqref{SupEEasmp.a}. We shall use the majorizing measure method (Theorem \ref{Talagrand})   and our bound for the natural distance (Lemma \ref{LemMetricProp}). Let us separate the proof into the cases  $L>\sqrt{T}$   and $L\leq \sqrt{T}$.  First, we assume $L>\sqrt{T}$.
 We choose the admissible sequences  $(\cA_n)$ as uniform partition of $\bT\times\bL=[0,T]\times\LL$ such that ${\rm card}(\cA_n)\leq 2^{2^n}$. More precisely, we partition
 $[0, T]\times [-L, L]$ as
\begin{equation*}
 \begin{cases}  [0,T] =\bigcup\limits_{j=0}^{2^{2^{n-1}}-1}\lt[j\cdot2^{-2^{n-1}}T,(j+1)\cdot2^{-2^{n-1}}T\rt)\,, \\ \\
   [-L,L] =\bigcup\limits_{k=-2^{2^{n-2}}}^{2^{2^{n-2}}-1}\lt[k\cdot2^{-2^{n-2}}L,(k+1)\cdot2^{-2^{n-2}}L\rt)\,.
   \end{cases}
 \end{equation*}
Theorem \ref{Talagrand}  states
  \begin{equation}\label{UEESup1}
  	\EE \lk\sup_{(t,x)\in\bT\times\bL} u_\affine (t,x)\rk\leq C\gamma_2(T,d)\leq C\sup_{(t,x)\in\bT\times\bL} \sum_{n\geq 0}2^{n/2} \diam(A_n(t,x))\,.
  \end{equation}
 Here $A_n(t,x)$ is the  element  of uniform partition $\cA_n$ that contains $(t,x)$, i.e.
 \[
  A_n(t,x)=\lt[j\cdot 2^{-2^{n-1}}T,(j+1)\cdot 2^{-2^{n-1}}T\rt)\times \lt[k\cdot 2^{-2^{n-2}}L,(k+1)\cdot 2^{-2^{n-2}}L\rt)
 \]
 such that $j\cdot 2^{-2^{n-1}}T\leq t< (j+1)\cdot 2^{-2^{n-1}}T$ and $k\cdot 2^{-2^{n-2}}L \leq x<(k+1)\cdot 2^{-2^{n-2}}L$.
We only need to estimate diameter of each $A_n(t,x)$. Since $(\cA_n)$ is an uniform partition,  the diameter of $A_n(t,x)$ with respect to    $d_{1,H}((t,x), (s,y))$ defined in \eqref{EqvMetric} can be estimated as
  \[
   \diam(A_n(t,x))\leq C_{H}\lc T^{\frac H2}\wedge (2^{-H 2^{n-2}}L^H) \rc+C_H 2^{-H2^{n-2}}T^{\frac H2}.
  \]
 
 For $L\geq\sqrt{T}$, we can split it into two cases: $\sqrt{T}\leq L<2\sqrt{T}$ and $L\geq 2\sqrt{T}$. It is clear that the case  $L\geq 2\sqrt{T}$ is  more complicated. We consider it at first.  Let $N_0$ be the   smallest integer  such that $2^{-2^{n-2}}L\leq \sqrt{T}$, i.e. $\log_2(\log_2(L/\sqrt{T}))+2\leq N_0< \log_2(\log_2(L/\sqrt{T}))+3$. By  \eqref{UEESup1} we have
  \begin{align}\label{eq.SupEE.aU}
  	&\EE \lk\sup_{(t,x)\in\bT\times\bL} u(t,x)\rk \nonumber\\
  { \leq}& C_H \sup_{(t,x)\in\bT\times\bL} \lk \sum_{n=0}^{N_0}2^{n/2} \diam(A_n(t,x))+ \sum_{n= N_0+1}^{\infty}2^{n/2} \diam(A_n(t,x))\rk \nonumber\\
  		\leq& C_{H} \, T^{\frac H2}\lk \sum_{n= 0}^{N_0}2^{n/2}+\sum_{n= N_0+1}^{\infty}2^{n/2} \lc\frac{2^{2^{N_0-2}}}{2^{2^{n-2}}}\rc^{H} \rk +C_H T^{\frac H2}\nonumber\\
  		\leq&C_H\, T^{\frac H2}\cdot 2^{N_0/2}+C_H T^{\frac H2} \leq C_H\, T^{\frac H2}\Psi_0(T,L)\,,
  \end{align}
  where $\Psi_0(T,L)=1+\sqrt{ \log_2 ( L/ \sqrt{T})}$ and $L\geq 2\sqrt{T}$. The case $\sqrt{T}\leq L<2\sqrt{T}$ is   easy because $\Psi_0(T,L)$ is bounded now. One can prove directly that $\EE \lk\sup_{(t,x)\in\bT\times\bL} u(t,x)\rk\leq C_H T^{\frac H2}$ same as \eqref{eq.SupEE.bU}. This  concludes proof of the upper bound in \eqref{SupEEasmp.a} when $L\geq \sqrt{T}$.

Now,  we   prove  the upper bound part in  \eqref{SupEEasmp.a} when  $L< \sqrt{T}$. The  same uniform partition discussed above is still applicable. We have
  \begin{align}
    &\EE \lk\sup_{(t,x)\in\bT\times\bL} |u(t,x)|\rk \nonumber\\
  \leq& C_H  \lk \sum_{n=0}^{\infty}2^{n/2} \sup_{(t,x)\in\bT\times\bL} \diam(A_n(t,x))\rk \nonumber\\
   \leq& C_H  T^{\frac H2} \sum_{n=0}^{\infty}2^{n/2}\cdot 2^{-H2^{n-1}}+ C_H T^{\frac{H}{2}}\leq C_H T^{\frac{H}{2}}\,,\label{eq.SupEE.bU}
  \end{align}
  because
  \[
   \sup_{(t,x)\in\bT\times\bL} \diam(A_n(t,x))\leq C_H\lk \lc2^{-2^{n-2}}L\rc^H+\lc 2^{-2^{n-1}}T\rc^{\frac H2} \rk\leq C_H 2^{-H2^{n-2}}T^{\frac H2}\,.
  \]
  This completes the upper bounds part of \eqref{SupEEasmp.a}.

  We will utilize Theorem \ref{Sudakov} (Sudakov minoration Theorem) to prove the lower bound  in \eqref{SupEEasmp.a}.  We also divide the proof into two cases:  $L\geq\sqrt{T}$ and $L< \sqrt{T}$.

First, we consider the case $L\geq\sqrt{T}$.  Select $\delta$ in Theorem \ref{Sudakov} as $c_H T^{\frac H2}$ with certain  relatively
 small $c_H>0$. For the sequence $\{u(T,x_i),i=0,1,\cdots,\pm N\}$,  where  $N=\lfloor L/\sqrt{T}\rfloor$ ($\geq 1$ by the assumption)   and
  \[
   x_0=0,x_{\pm1}=\pm \sqrt{T},\cdots, x_{\pm N}=\pm N \sqrt{T}\,,
  \]
   we have
   \[
   d_{1,H}((T,x_i),(T,x_j))\geq c_H T^{\frac H2}=\delta  \quad\hbox{if $i\neq j$}\,.
   \]
     Sudakov's  minoration theorem  implies
  \begin{equation}\label{SupEEasmp.aL}
  \begin{split}
     &\EE \lk\sup_{(t,x)\in\bT\times\bL} |u(t,x)|\rk \geq \EE \lk\sup_{i} u(T,x_i)\rk  \\
     \geq& c_H \delta \sqrt{\log_2(2N+1)}\geq c_H T^{\frac{H}{2}}\Psi_0(T,L)\,.
  \end{split}
  \end{equation}
  The lower bound in \eqref{SupEEasmp.a} is established when $L\geq \sqrt{T}$.

Now we   prove  the lower bound part in  \eqref{SupEEasmp.a} when  $L< \sqrt{T}$.  We choose  $\delta=c_H T^{\frac H2}$ as
above   and we choose $u(T/2,0),u(T,0)$ as our comparison set. We have $d_{1,H}((T/2,0),(T,0))\geq c_H(T/2)^{\frac H2}\geq \delta$.  Theorem \ref{Sudakov} gives
  \begin{equation}\label{eq.SupEE.bL}
    \EE \lk\sup_{(t,x)\in\bT\times\bL}  u(t,x) \rk \geq \EE[u(T/2,0)\vee u(T,0)]\geq c_H T^{\frac H2}\,.
  \end{equation}
Thus,   the proof of the lower bound part in \eqref{SupEEasmp.a}  is completed.
\end{proof}

Notice that  {from \eqref{EqvMetric}, it follows that } for any  fixed $t\in \RR_+$
    \begin{equation}\label{EqvMetric.x}
 {  	 d_1((t,x),(t,y))\asymp d_{t,H}(x,y):=t^{\frac H2}\wedge |x-y|^H\,,
}    \end{equation} 
and for fixed $x\in\RR$
    \begin{equation}\label{EqvMetric.t}
   	 d_1((t,x),(s,x))\asymp d_{1, H}(t,s):=|t-s|^{\frac H2}\,.
    \end{equation}
Using a similar   argument to that in the    proof of  inequality \eqref{SupEEasmp} we have   the following corollary.
\begin{corollary}\label{SupEEasmp.tx}
   Let the Gaussian field $u_\affine (t,x)$ be defined by \eqref{LinearMildSol}.
 There are positive universal constants $c_H$ and  $C_H$ such that
\begin{equation}\label{SupEEasmp.x}
    \begin{cases}
    \displaystyle c_H t^{\frac{H}{2}}\sqrt{\log_2(L)}
    \le    \EE \lk\sup_{-L\le x\le L} |u_\affine (t,x)|\rk\ \\ \qquad\qquad\qquad \quad\le      \EE \lk\sup\limits_{-L\le x\le L}  u_\affine (t,x) \rk \le C_H t^{\frac{H}{2}}\sqrt{\log_2(L)}\,;\\  \\
    \displaystyle c_H T^{\frac{H}{2}}\le      \EE \lk\sup_{0\le t\le T}  u_\affine (t,x) \rk  \le      \EE \lk\sup_{0\le t\le T} |u_\affine (t,x)|\rk\le C_H T^{\frac{H}{2}}\,.
    \end{cases}
\end{equation}
\end{corollary}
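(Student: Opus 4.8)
The plan is to reproduce, in each of the two one-parameter settings, the argument already used for the first part of Theorem \ref{LUEESup}. In both cases I would begin by invoking Lemma \ref{equiv} together with the symmetry of the centered Gaussian field $u_\affine$ to reduce the estimation of $\EE[\sup|u_\affine|]$ to that of $\EE[\sup u_\affine]$; then Talagrand's majorizing measure theorem (Theorem \ref{Talagrand}) delivers the upper bounds and Sudakov's minoration (Theorem \ref{Sudakov}) the matching lower bounds. The only ingredient specific to each family is the precise shape of the natural pseudo-metric, and these are supplied by \eqref{EqvMetric.x} and \eqref{EqvMetric.t}; everything else is bookkeeping parallel to \eqref{eq.SupEE.aU}--\eqref{eq.SupEE.bL}.

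For the spatial estimate (the first line of \eqref{SupEEasmp.x}) I would fix $t$ and work with $d_{t,H}(x,y)=t^{H/2}\wedge|x-y|^H$. For the upper bound I take the uniform admissible partition of $[-L,L]$ into $2^{2^n}$ subintervals, so that ${\rm card}(\cA_n)\le 2^{2^n}$ and each cell has $d_{t,H}$-diameter at most $(2L\cdot 2^{-2^n})^H\wedge t^{H/2}$. Letting $N_0$ be the first index at which the cap $t^{H/2}$ is attained, so that $2^{N_0}\asymp\log_2(L/\sqrt t)$, and splitting $\sum_{n\ge 0}2^{n/2}\diam(A_n)$ at $N_0$ exactly as in \eqref{eq.SupEE.aU}, the saturated part contributes $\asymp t^{H/2}2^{N_0/2}\asymp t^{H/2}\sqrt{\log_2 L}$ (for fixed $t$, $\log_2(L/\sqrt t)\asymp\log_2 L$ as $L$ grows), while the tail decays double-exponentially and contributes only $O(t^{H/2})$. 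For the lower bound I place the $2N+1$ points $x_i=i\sqrt t$, $i=0,\pm1,\dots,\pm N$, with $N=\lfloor L/\sqrt t\rfloor$; since $|x_i-x_j|\ge\sqrt t$ forces $d_{t,H}(x_i,x_j)\ge c_H t^{H/2}$, Theorem \ref{Sudakov} with $\delta=c_H t^{H/2}$ yields $\EE[\sup_i u_\affine(t,x_i)]\ge c_H t^{H/2}\sqrt{\log_2(2N+1)}\asymp t^{H/2}\sqrt{\log_2 L}$, matching the upper bound.

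For the temporal estimate (the second line of \eqref{SupEEasmp.x}) I would fix $x$ and use $d_{1,H}(t,s)=|t-s|^{H/2}$, which, unlike $d_{t,H}$, never saturates and behaves like the increments of a fractional Brownian motion of Hurst parameter $H/2$. Partitioning $[0,T]$ uniformly into $2^{2^n}$ intervals, each cell has diameter $\asymp (T2^{-2^n})^{H/2}$, so that $\sum_{n\ge 0}2^{n/2}(T2^{-2^n})^{H/2}=T^{H/2}\sum_{n\ge 0}2^{n/2}2^{-H2^n/2}$ converges and gives the upper bound $C_H T^{H/2}$ with no logarithmic factor. The lower bound follows from Sudakov applied to the two-point set $\{(T/2,x),(T,x)\}$, whose mutual distance is $\ge c_H(T/2)^{H/2}\ge\delta$, producing $\EE[\sup_t u_\affine(t,x)]\ge\EE[u_\affine(T/2,x)\vee u_\affine(T,x)]\ge c_H T^{H/2}$, exactly as in \eqref{eq.SupEE.bL}.

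The routine parts are entirely parallel to the proof of Theorem \ref{LUEESup}, and I expect no genuinely new difficulty. The one place demanding care is the spatial upper bound, where one must track precisely the index $N_0$ at which the capped metric $d_{t,H}$ saturates and carry out the split summation so that the correct power $\sqrt{\log_2 L}$ emerges, rather than $\log_2 L$ or a mere constant. Since this is the very mechanism already exhibited in \eqref{eq.SupEE.aU}, the main obstacle is only the correct bookkeeping of the saturation level and the verification that $\log_2(L/\sqrt t)\asymp\log_2 L$ for fixed $t$.
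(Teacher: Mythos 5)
Your proposal is correct and coincides with the paper's own (omitted) argument: the paper simply states that the corollary follows ``using a similar argument to that in the proof of inequality \eqref{SupEEasmp}'', i.e.\ Lemma \ref{equiv} plus Talagrand's majorizing measure theorem for the upper bounds and Sudakov minoration for the lower bounds, applied with the one-parameter metrics \eqref{EqvMetric.x} and \eqref{EqvMetric.t}, exactly as you do. Your bookkeeping of the saturation index $N_0$, the choice of points $x_i=i\sqrt t$, and the two-point set $\{(T/2,x),(T,x)\}$ mirror \eqref{eq.SupEE.aU}, \eqref{SupEEasmp.aL} and \eqref{eq.SupEE.bL} respectively, so nothing further is needed.
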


Next,   we  shall explain that the almost sure version of  Theorem  \ref{LUEESup}  is a consequence of \eqref{SupEEasmp} with the aid of  Borell's inequality  (Theorem \ref{Borell}).  

\medskip
\begin{proof}[Proof of the second part of Theorem \ref{LUEESup}]
    First,  we shall prove \eqref{Supasmp} for $T=n^\al$ for some $\al$ and for all sufficiently large  integer $n$.
	Denote $\bL:=[-L,L]$, $\bT^{\alpha}=[0,n^{\alpha}]$.   { Let  $\varepsilon>0$ and   let    
    $L\geq n^{\frac{(1+\varepsilon)\alpha}{2}}$ be sufficiently large.     We start with  the lower bound.  Theorem \ref{LUEESup} gives} 
	\[
	 \EE \lk\sup_{(t,x)\in\bT^{\alpha}\times\bL} u_\affine (t,x)\rk\geq c_H\left(
        n^{\frac{\alpha H}{2}}+n^{\frac{\alpha H}{2}}\sqrt{\log_2\lc\frac{L}{n^{\alpha/2}}\rc}\right)
	\]
    for some positive number $c_H$.       Denote
	\[
	 \lambda_{H}:=\lambda_{H}(\bT^\alpha\times\bL)=\frac{1}{2}\EE \lk\sup_{x\in\bT^\alpha\times\bL} u_\affine (t,x)\rk\,,
	\]
    and
	\[
     \sigma^2_{H}:=\sigma^2_{H}(\bT^\alpha\times\bL)=\sup_{(t,x)\in\bT^\alpha\times\bL}\EE[|u_\affine (t,x)|^2]=C_H n^{\frac{\alpha H}{2}} \,.
	\]
    Then,  Borell's inequality implies
	\begin{equation}\label{BorellConsqL}
	\begin{split}
		&\bP\lt\{\sup_{(t,x)\in\bT^{\alpha}\times\bL} u_\affine (t,x)<\frac 12 \EE\Blk\sup_{(t,x)\in\bT^{\alpha}\times\bL}u_\affine
    (t,x)\Brk\rt\}  \leq2\exp\lc -\frac{\lambda^2_{H}}{2\sigma^2_{H}}\rc\\
		& \leq 2\exp\lc{-c_{H}\lk 1+\log_2\lc\frac{L}{n^{\alpha/2}}\rc\rk}\rc
		\leq C_H\lk\frac{n^{\alpha}}{n^{\alpha(1+\varepsilon)}}\rk^{\frac{c_H}{2}}\leq C_H n^{-\alpha\varepsilon\cdot\frac{c_H}{2}}\,,
	\end{split}
	\end{equation}
	where $c_{H},C_H>0$ are some constants independent of  $n$.
	Select real number   $\alpha$ sufficiently large such that $\alpha\varepsilon\cdot\frac{c_H}{2}>1$ and define the events $F_n$
    \[
     F_n:=\lt\{\sup_{(t,x)\in\bT^{\alpha}\times\bL} u_\affine (t,x)<\frac 12 \EE\Blk\sup_{(t,x)\in\bT^{\alpha}\times\bL}u_\affine (t,x)\Brk\rt\}.
    \]
    The bound \eqref{BorellConsqL} means  $\sum_{n=1}^{\infty}\bP(F_n)<\infty$. An application of   Borel-Cantelli's  lemma yields that $\bP(\limsup_n F_n)=0$.
    This means that
	\begin{equation}\label{LSup_as}
		\sup_{(t,x)\in\bT^{\alpha}\times\bL} u_\affine (t,x)\ge \frac 12 \EE\Blk\sup_{(t,x)\in\bT^{\alpha}\times\bL}u_\affine
    (t,x)\Brk\ge
    c_H \,T^{\frac{H}{2}} \Psi_0(T,L) \,,
	\end{equation}
  almost surely for sufficiently large values of 	  $T=n^{\al}$.   Then letting $\ep\to 0$ proves lower bound part of \eqref{Supasmp}.

    The proof of the upper bound in \eqref{Supasmp} can be done  in   exactly the same manner  as  that in the proof of  the lower bound except now we replace  \eqref{BorellConsqL}   by
    \begin{equation}\label{BorellConsqU}
    \begin{split}
     &\bP\lt\{\sup_{(t,x)\in\bT^{\alpha}\times\bL} u_\affine (t,x)>\frac 32 \EE\Blk\sup_{(t,x)\in\bT^{\alpha}\times\bL}u_\affine (t,x)\Brk\rt\}
    \leq2\exp\lc -\frac{\lambda^2_{H}}{2\sigma^2_{H}}\rc\\
	&\leq 2\exp\lc{-c_{H}\lk 1+\log_2\lc\frac{L}{n^{\alpha/2}}\rc\rk}\rc
		\leq C_H\lk\frac{n^{\alpha}}{n^{\alpha(1+\varepsilon)}}\rk^{\frac{c_H}{2}}\leq C_H n^{-\alpha\cdot\frac{c_H\varepsilon}{2}}\,,
    \end{split}
    \end{equation}
    with some positive  constant $c_{H},C_H$ independent of $n$. Similar   to \eqref{LSup_as}  we  have   
    \begin{equation}\label{USup_as}
		\sup_{(t,x)\in\bT^{\alpha}\times\bL} u_\affine (t,x)\leq \frac 32 \EE\Blk\sup_{(t,x)\in\bT^{\alpha}\times\bL}u_\affine
    (t,x)\Brk\leq
    C_H \,T^{\frac{H}{2}} \Psi_0(T,L)
	\end{equation}
    almost surely for sufficiently large    $T=n^{\al}$. And then $\ep\to 0$ implies the upper bound in \eqref{Supasmp}. 

    Finally, we   conclude the proof of   \eqref{Supasmp} for   $\sup_{(t,x)} u_\affine (t,x)$   by combining \eqref{LSup_as}, \eqref{USup_as}  and  the property that $\sup_{(t,x)\in\bT\times\bL} u_\affine (t,x)$ is an increasing function of $L$ and $T$  almost surely.    On the other hand,  it is easy to see
    \[
     \sup_{x}|f(x)|\leq \sup_{x}[f(x)]+\sup_{x}[-f(x)]
    \]
    since $|f(x)|\leq \sup_{x}[f(x)]+\sup_{x}[-f(x)]$ for any function $f(x)$.     Since  $u_\affine(t,x)$ is symmetric, we see that   $\sup_{t,x}[-u_\affine(t,x)]$ and  $\sup_{t,x}[u_\affine(t,x)]$ have the same law. Then,   we have
\begin{equation}
     \sup_{t,x}|u_\affine(t,x)|\leq 2\sup_{t,x}[u_\affine(t,x)]\,.
    \label{e.3.31aa}
    \end{equation} 
This completes  the proof of \eqref{Supasmp}.
\end{proof}

    One can show the following asymptotic \eqref{Supasmp.x} by combining \eqref{SupEEasmp.x} and Borell's inequality  and   we omit the details.

\begin{corollary}\label{LUSupasmp}
	Let $u_\affine (t,x)$ be defined  by	 \eqref{LinearMildSol} and let $T$ satisfy  $T\le L^2$.   Then, there are two positive random constants $c_H$ and $C_H$ such that   for  any fixed $t\in [0,T]$ we have
\begin{align}
 c_H\,   t^{\frac{H}{2}}\sqrt{\log_2(L)}     &\le \sup_{-L\le x\le L}  u_\affine (t,x) \nonumber\\
 &\le \sup_{-L\le x\le L} |u_\affine (t,x)| \le
      C_H\, t^{\frac{H}{2}}\sqrt{\log_2(L)}  \quad \hbox{
almost surely}\,. 
\label{Supasmp.x}
\end{align} 
\end{corollary}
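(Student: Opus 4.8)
The plan is to transplant the Borell-plus-Borel--Cantelli scheme already used for the second part of Theorem~\ref{LUEESup} to the fixed-time spatial field $x\mapsto u_\affine(t,x)$. Fix $t\in[0,T]$ and write $M_L:=\sup_{-L\le x\le L}u_\affine(t,x)$; note that $L\mapsto M_L$ is nondecreasing and that, since $T\le L^2$ forces $\sqrt t\le L$, we sit in the regime where \eqref{SupEEasmp.x} applies. Two ingredients are in hand. The expectation asymptotics \eqref{SupEEasmp.x} give $\EE[M_L]\es t^{\frac H2}\sqrt{\log_2 L}$. For the variance, setting $s=0$ in \eqref{NMPlancherel} annihilates the first summand and leaves the exact, $x$- and $L$-independent value $\EE[u_\affine(t,x)^2]=2^{H-1}\kappa_H\,t^H$, so that the Borell variance parameter is $\sigma_t^2:=\sup_{|x|\le L}\EE[u_\affine(t,x)^2]=2^{H-1}\kappa_H\,t^H$.

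First I would feed these into Borell's inequality (Theorem~\ref{Borell}), applied to the separable Gaussian family $\{u_\affine(t,x):|x|\le L\}$, with deviation level $\lambda=\tfrac12\EE[M_L]$. The decisive observation is that the factor $t^H$ cancels between numerator and denominator, so that $\lambda^2/\sigma_t^2\es\log_2 L$ with a deterministic constant depending on $H$ alone. Hence there is a deterministic $b_H>0$ with
\begin{equation*}
\bP\Bigl(M_L<\tfrac12\EE[M_L]\Bigr)\le 2\,e^{-b_H\log_2 L}\le 2L^{-b_H}\,,\qquad \bP\Bigl(M_L>\tfrac32\EE[M_L]\Bigr)\le 2L^{-b_H}\,.
\end{equation*}

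Next I would specialize to the geometric subsequence $L=2^n$, where these bounds read $2\cdot 2^{-nb_H}$ and are therefore summable. Borel--Cantelli then yields that, almost surely, for all large $n$ one has $\tfrac12\EE[M_{2^n}]\le M_{2^n}\le\tfrac32\EE[M_{2^n}]$, and hence $M_{2^n}\es t^{\frac H2}\sqrt n$ by the expectation bound. To interpolate to arbitrary large $L$ I would invoke monotonicity: for $2^n\le L\le 2^{n+1}$ we have $M_{2^n}\le M_L\le M_{2^{n+1}}$, while $\sqrt n\es\sqrt{n+1}\es\sqrt{\log_2 L}$, which upgrades the two-sided estimate to $M_L\es t^{\frac H2}\sqrt{\log_2 L}$ almost surely for all $L$ large enough (the finitely many small values of $L$ being absorbed into the random constants). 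Finally, the absolute-value version in \eqref{Supasmp.x} follows by symmetry exactly as in \eqref{e.3.31aa}: because $-u_\affine$ has the same law as $u_\affine$, the estimate just established holds for $-u_\affine$ as well, so $\sup_{|x|\le L}(-u_\affine(t,x))\es t^{\frac H2}\sqrt{\log_2 L}$ almost surely, and then $M_L\le\sup_{|x|\le L}|u_\affine(t,x)|\le M_L+\sup_{|x|\le L}(-u_\affine(t,x))$ pins the absolute-value supremum to the same order.

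The genuinely substantive step—and the reason the displayed bound can carry random constants whose deterministic parts are free of $t$—is the exact cancellation of $t^H$ in $\lambda^2/\sigma_t^2$; without it the tail exponent would deteriorate as $t\downarrow0$ and the summability underlying Borel--Cantelli would collapse. Everything else (the closed form of $\sigma_t^2$, the equivalence $\sqrt n\es\sqrt{\log_2 L}$, and the symmetrization) is routine. I should flag that the Borel--Cantelli null set produced this way is attached to the fixed $t$; securing a single exceptional set valid simultaneously for all $t\in[0,T]$ would demand an extra continuity-in-$t$ argument, which is unnecessary for the per-$t$ statement as phrased.
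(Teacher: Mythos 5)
Your proposal is correct and follows exactly the route the paper intends: the paper omits the proof of Corollary \ref{LUSupasmp}, stating only that it follows "by combining \eqref{SupEEasmp.x} and Borell's inequality," and your argument fills in those details with the same Borell-plus-Borel--Cantelli-plus-monotonicity scheme used in the proof of the second part of Theorem \ref{LUEESup} (your geometric subsequence $L=2^n$ playing the role of the paper's $T=n^\alpha$, $L\ge n^{(1+\varepsilon)\alpha/2}$ parametrization). Your observation that $t^H$ cancels in $\lambda^2/\sigma_t^2$, and your closing caveat that the null set is attached to the fixed $t$, are both accurate and consistent with the statement as phrased.
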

\begin{remark}
  As in \cite{DMD2013,DMDS2013},  the inequality 
  \eqref{Supasmp.x} implies that  there exist  some constants $c,C>0$ such that
  \begin{equation}\label{LimSupasmp.x}
    ct^{\frac H2}\leq \liminf_{|x|\to \infty} \frac{u_\affine (t,x)}{\sqrt{\log_2(|x|)}}\leq \limsup_{|x|\to \infty} \frac{u_\affine (t,x)}{\sqrt{\log_2(|x|)}} \leq Ct^{\frac H2}\,,
  \end{equation}
  for any $t\in\RR_+$ almost surely.
\end{remark}

\medskip
We now turn to show   Theorem \ref{LUHolderEESup}.

\begin{proof}[Proof of Theorem \ref{LUHolderEESup}]
  $\Delta _h u_\affine(t,x)$ is   centered symmetric and  stationary Gaussian process.     As before, we   only need to find appropriate bounds     for $\Delta _h u_\affine(t,x)$.
 The conclusion with respect to  $|\Delta _h u_\affine(t,x)|$ 
  follows from \eqref{e.3.31aa}.  
  Our  strategy  to  prove Theorem \ref{LUHolderEESup} 
  for   $\Delta _h u_\affine(t,x)$  is also   to  apply Talagrand's majorizing measure theorem and   Sudakov's   minoration theorem  to the following Gaussian process
  \begin{equation}\label{Diff}
  \begin{split}
     \Delta _h u_\affine(t,x):=&u_\affine (t,x+h)-u_\affine(t,x)\\
       =&\int_{0}^{t}\int_{\RR} [G_{t-s}(x+h-z)-G_{t-s}(x-z)] W(ds,dz),
  \end{split}
  \end{equation}
  with fixed $t>0$ and fixed $h\neq 0$.  Without loss of generality, we   assume $h>0$.
  The natural metric  is given  by
  \begin{equation*}
    d_{2, t,h}(x,y):=\lc\EE| \Delta _h u_\affine(t,x)- \Delta _h u_\affine (t,y))|^2\rc^{\frac 12} \,.
  \end{equation*}
  We need  to obtain   good upper and lower bounds of  $ d_{2, t,h}(x,y)$.
  Let us first  focus on the upper bound.      Similar to \eqref{NMPlancherel}  Plancherel's identity yields
  \begin{equation*}
  \begin{split}
     d_{2, t,h}^2(x,y)&=C_H \int_{\RR_+}[1-\exp(-2t\xi^2)][1-\cos(|x-y|\xi)][1-\cos(h\xi)]\cdot\xi^{-1-2H}d\xi\,.
  \end{split}
  \end{equation*}
  By the same argument as that in the proof of  the upper bound of $d_1((s,x),(s,y))$ in Lemma \ref{LemMetricProp}  it is easy to see  that for any  $0\leq\theta\leq 1$ 
  \begin{align*}
    d_{2, t,h}^2(x,y)&\le C_H \int_{\RR_+} [1-\exp(-2t\xi^2)][1-\cos(h\xi)]\cdot\xi^{-1-2H}d\xi \\
    &\leq C_{H} t^{H}\wedge h^{2H}\leq  C_{H} t^{H-\theta}h^{2\theta}\,. 
  \end{align*}
   On the other hand, an application of the  elementary inequality $1-\cos(x)\leq C_{\theta}x^{2\theta}$,  where $\theta\in(0,H)$   is  as above,   and a substitution $\xi\rightarrow \xi/|x-y|$   yield 
  \begin{equation*}
  \begin{split}
     d_{2, t,h}^2(x,y)&\leq C_{\theta,H}h^{2\theta}|x-y|^{2H-2\theta} \int_{\RR_+}  [1-\cos(\xi)]\xi^{2\theta-1-2H}d\xi \\
     &\leq C_{\theta,H}h^{2\theta}|x-y|^{2H-2\theta}\,.
  \end{split}
  \end{equation*}
  In conclusion, we have the following bound analogous to upper bound part of \eqref{EqvMetric}:
  \begin{equation}\label{U|x-y|Holder}
    d_{2, t,h}(x,y)\leq C_{ H,\theta}h^{\theta}(|x-y|^{H-\theta}\wedge t^{\frac{H-\theta}{2}})\,,
  \end{equation}
  for any $\theta\in(0,H)$.

{ Now we can follow  the same argument (\eqref{eq.SupEE.aU} in particular) as that  in the proof of Theorem \ref{LUEESup} by   invoking Talagrand's majorizing measure theorem   (Theorem \ref{Talagrand})  to  prove the upper bound part of  \eqref{SupHolderEEasmp}:
  \[
  \EE \lk\sup_{x\in\bL} \Delta _h u_\affine(t,x)\rk\leq C_{ H,\theta} |h|^{\theta} t^{\frac{H-\theta}{2}} \Psi_0(t,L)\,,
  \]
  if $L\geq \sqrt{t}$.
  Now we turn to prove the lower  bound  part of \eqref{SupHolderEEasmp}}.
To this end, we  need   the inverse   part  of \eqref{U|x-y|Holder} and we shall use  again   the  Sudakov minoration theorem.  
      Observe that we only need to consider   the  case  when $|x-y|\geq \sqrt{t}$. We claim
  \[
   d_{2, t,h}^2(x,y)
   \geq c_{H}  h^{2H}\quad   \hbox{when $|x-y|\ge \sqrt{t}$  ~and~  $h\leq \sqrt{\frac{t\pi^2}{8\ln2}}\wedge1$}  \,.
  \]
 In fact,   notice that
 \[
 1-\exp\lc-\frac{2t\xi^2}{|x-y|^2} \rc\geq \frac 12  \qquad \hbox{  $\forall \ \ \xi\ge \frac{|x-y|\pi}{4h}$ and $h\leq \sqrt{\frac{t\pi^2}{8\ln2}}\wedge1$}\,.
 \]
 The
 simple inequality
 \[
 1-\cos(x)\geq x^2/4 \quad \hbox{if $|x|\leq \pi/2$}\,
 \]
 implies 
 \[
 1-\cos \left(\frac{h\xi}{|x-y|}\right)\ge
  \frac{h^2\xi^2}{4|x-y|^2}  \quad \hbox{ if $\xi\le \frac{|x-y|\pi}{2h}$}\,.
 \]
  Therefore, a substitution $\xi\rightarrow \xi/|x-y|$ yields
  \begin{equation*}
    \begin{split}
  &     d_{2, t,h}^2(x,y)\\
        =&c_H |x-y|^{2H} \int_{\RR_+}\lt[1-\exp\lc-\frac{2t\xi^2}{|x-y|^2}\rc\rt]\lt[1-\cos\lc \frac{h\xi}{|x-y|}\rc\rt] \\
        &\qquad\qquad\qquad\qquad\qquad\qquad\qquad\qquad\qquad\cdot[1-\cos(\xi)]\xi^{-1-2H}d\xi \\
        \geq& c_H |x-y|^{2H}\int_{\frac{|x-y|\pi}{4h}}^{\frac{|x-y|\pi}{2h}} \lt[1-\cos\lc \frac{h\xi}{|x-y|}\rc\rt][1-\cos(\xi)]\cdot\xi^{-1-2H}d\xi \\
        \geq& c_H h^2|x-y|^{2H-2} \int_{\frac{|x-y|\pi}{4h}}^{\frac{|x-y|\pi}{2h}} [1-\cos(\xi)]\cdot\xi^{1-2H}d\xi\,. 
    \end{split}
  \end{equation*}
  Set
  \[
   k_0=\inf \left\{k\in\bN_0:\frac{(2k+1)\pi}{2}\geq \frac{|x-y|\pi}{4h}\rt\}\,;
  \]
  and
  \[
   k_1=\sup\lt\{k\in\bN_0:\frac{(2k+3)\pi}{2}\leq \frac{|x-y|\pi}{2h}\rt\}\,.
  \]
  If $h$  is  sufficiently small,  then
  \begin{equation*}
  	\begin{split}
  		&\int_{\frac{|x-y|\pi}{4h}}^{\frac{|x-y|\pi}{2h}} [1-\cos(\xi)]\cdot\xi^{1-2H}d\xi=\sum_{k\geq 0} \int_{I_k\cap [\frac{|x-y|\pi}{4h},\frac{|x-y|\pi}{2h}]} [1-\cos(\xi)]\cdot\xi^{1-2H}d\xi \\
  	   \geq &\sum_{k=k_0}^{k_1}  \int_{I_k} [1-\cos(\xi)]\cdot\xi^{1-2H}d\xi \geq \frac 12 \int_{\frac{(2k_0+1)\pi}{2}}^{\frac{(2k_1+3)\pi}{2}} \xi^{1-2H}d\xi \\
  	   =& c_H\lk \lc\frac{(2k_1+3)\pi}{2}\rc^{2-2H}- \lc\frac{(2k_0+1)\pi}{2}\rc^{2-2H}\rk\geq c_H \lc\frac{|x-y|}{h}\rc^{2-2H},
  	\end{split}
  \end{equation*}
  due to the fact that $\xi^{1-2H}$ is an increasing function. Thus,  we have for $|x-y|\geq \sqrt{t}$
  \begin{equation}\label{L|x-y|Holder}
  	d_{2, t,h}(x,y)\geq c_{H}h^{H}  
  \end{equation}
  if $h\leq C(\sqrt{t}\wedge1)$ for some small positive  quantity $C$.  On the interval $\bL=\LL$ for $L$ large enough, let us select $x_j=jL/\sqrt{t}$ for $j=0,\pm 1,\cdots,\pm \lfloor L/\sqrt{t}\rfloor$. Similar to \eqref{SupEEasmp.aL}, applying the Sudakov minoration
  theorem  (Theorem \ref{Sudakov}) with $\delta=c_H |h|^H$
  yields
  \begin{equation*}
  	\EE\lk\sup_{x\in\bL}\Delta _h u_\affine(t,x)\rk\geq \EE\lk\sup_{x_i}\Delta _h u_\affine(t,x)\rk\geq c_H |h|^H\Psi_0(t,L)\,.
  \end{equation*}
  The proof of \eqref{SupasmpHolder}  follows  from exactly the same argument  as that in the proof of \eqref{Supasmp}   by   Borel-Cantelli's  lemma. The only difference is that now we have
  \[
   \begin{cases}  \sigma_t^2(h)=\sup\limits_{x\in\bL^{\alpha}}\EE[|\Delta_h u_\affine(t,x)|^2]\leq C_{H,\theta} t^{H-\theta}|h|^{2\theta}\,;\\ \\
   \lambda_L:=\frac 12 \EE\lt[\sup\limits_{x\in\bL^{\alpha}} \Delta_h u_\affine(t,x)\rt]\,;\\ \\
   \exp\lc-\frac{\lambda_L^2}{2\sigma_t^2(h)}\rc\leq C_{H,\theta}\exp\lc-\lk\frac{h^2}{t}\rk^{H-\theta}\log_2\lt[\frac{n^\alpha}{\sqrt{t}} \rt]\rc\,,
   \end{cases}
  \]
where $\bL^{\alpha}:=[-n^\alpha,n^\alpha]$. We can then complete  the proof  of the theorem by choosing $\alpha$ appropriately.  We omit the details here.
\end{proof}
%

 \medskip
\begin{proof}[Proof of Theorem \ref{LUtHolderEESup}]
{    We will use the same method as  that in the proof of Theorem 
\ref{LUHolderEESup}. } 
 The  natural metric associated with the time increment of the solution is
 \[
 d_{3, t, \tau} (x,y)=(\EE|\Delta_\tau u_\affine(t,x)-\Delta_\tau u_\affine (t,y)|^2)^{\frac 12}\,.
 \]
 Using
	\[
    \Delta_\tau u_\affine(t,x) =\int_{0}^{t+\tau}\int_{\RR} G_{t+\tau-s}(x-z)W(ds,dz)-\int_{0}^{t}\int_{\RR} G_{t-s}(x-z)W(ds,dz)\,,
	\]
 and using the isometric property of stochastic integral and Plancherel's identity  one   derives  
 \begin{align}
 { d_{3, t,\tau}^2(x,y)}&=2\int_{\RR_+} f(t, \tau, \xi) [1-\cos(|x-y|\xi)]\cdot \xi^{-1-2H}d\xi\,,  \label{eq.NMtHolder}
 \end{align}
 where
	\begin{equation*}
	\begin{split}
		&f(t,\tau,\xi) \\
	   =&[1-\exp(-2(t+\tau)\xi^2)]+[1-\exp(-2t\xi^2)]-2\exp(-\tau\xi^2)[1-\exp(-2t\xi^2)] \\
	   =&[1-\exp(-2\tau\xi^2)]+[1-\exp(-2t\xi^2)][1+\exp(-2\tau\xi^2)-2\exp(-\tau\xi^2)].
	\end{split}
	\end{equation*}
 Notice  that  when $x\geq 0$,  $1-e^{-x}\leq C_{\theta}x^\theta$  and $1+e^{-2x}-2e^{-x}=(1-e^{-x})^2\leq C^2_{\theta}x^{2\theta}$ for any $\theta \in (0, 1)$.   Then,
    we have
	\[
	 f(t,\tau,\xi)\leq C_{\theta}(\tau\xi^2)^\theta \,,\quad \forall \  \theta\in (0, 1)\,.
	\]
 Inserting this bound into \eqref{eq.NMtHolder}   yields
 \begin{align*}
 d_{3, t,\tau}^2(x,y)\le & C_{\theta} \tau^\theta  \int_{\RR_+}  [1-\cos(|x-y|\xi)]\cdot \xi^{-1-2H+2\theta}d\xi\nonumber\\
 \le&  C_{H,\theta} \tau^\theta |x-y|^{2H-2\theta}
 \qquad \hbox{ for  any $0<\theta<H$}.
 \end{align*}
 On the other hand,   a substitution $\xi\rightarrow \xi/\sqrt{\tau}$ yields
    \begin{equation*}
	\begin{split}
		d_{3, t,\tau}^2(x,y)\leq&  C \int_{\RR_+}[1-\exp(-2\tau\xi^2)]\xi^{-1-2H}d\xi \\
        &+\int_{\RR_+} [1-\exp(-2t\xi^2)] [1-\exp(-\tau\xi^2)]^2 \xi^{-1-2H}d\xi \\
		\le& C_H \tau^{H}+C_{H,\theta}\tau^{\theta}t^{H-\theta} \leq C_{H,\theta} \tau^{\theta}t^{H-\theta} 
	\end{split}
	\end{equation*}
 when $\tau\leq C t$.
 Thus,  we have
	\begin{equation}\label{U|x-y|tHolder}
		d_{3, t,\tau}(x,y)\leq C_{H,t,\theta} \tau^{\theta/2} (|x-y|^{H-\theta}\wedge t^{\frac{H-\theta}{2}})\,,
	\end{equation}
 where $0<\theta<H$, which is the bound needed for us
 to prove  the upper bound part of \eqref{SuptHolderEEasmp}.
	
  The Sudakov minoration Theorem \ref{Sudakov} will still be used to  prove  the lower bound.
  We need to obtain an appropriate lower bound of $d_{3, t,\tau}(x,y)$ for $|x-y|\ge \sqrt{t}$.  It is easy to see 
	\begin{equation}
	\begin{split}
		d_{3, t,\tau}^2(x,y)&\geq c\int_{\RR_+}[1-\exp(-2\tau\xi^2)][1-\cos(|x-y|\xi)]\cdot \xi^{-1-2H}d\xi \\
			&\geq c\tau |x-y|^{2H-2} \int_{\frac{|x-y|}{2\sqrt{\tau}}}^{\frac{|x-y|}{\sqrt{\tau}}}
        [1-\cos(\xi)]\xi^{1-2H}d\xi.
        \label{e.3.integral1}
	\end{split}
	\end{equation}
 Analogous to the obtention of  \eqref{L|x-y|Holder}   we can conclude that the integral in \eqref{e.3.integral1}  is bounded below by a multiple of $\left(\frac{|x-y|}{\sqrt{\tau}}\right)^{2-2H}$. Thus, we obtain
	\begin{equation}\label{L|x-y|tHolder}
		d_{3, t,\tau}(x,y)\geq c_{H}\tau^{H/2} 
	\end{equation}
 if $\tau\leq C(t\wedge1)$ for some constant  $C$. This is the bound needed to use Theorem \ref{Sudakov} to show the lower bound part of \eqref{SuptHolderEEasmp}.

Once again, Borell's  inequality  (Theorem \ref{Borell})      can be combined with   Borel-Cantelli's lemma to  show   the almost sure  asymptotics \eqref{SupasmpHolder}, and the proof Theorem \ref{LUtHolderEESup}  is completed.
\end{proof}

In \cite{HHLNT2017} (see also next section)   to show the existence and uniqueness of the solution to \eqref{SHE} (for Hurst parameter $H\in (1/4, 1/2)$)
it is extensively used the following quantity
\begin{equation}
     \cN_{\frac 12 -H}u (t,x)=\lc\int_{\RR}|u  (t,x+h)-u  (t,x)|^2\cdot|h|^{2H-2} dh\rc^{\frac 12}\,,
     \label{e.3.defN}
    \end{equation}
    which plays the role of fractional derivative of $u$.
 It is because of the difficulty to appropriately bound this quantity
(see \cite{HHLNT2017} or the next section)    it is assumed that $\sigma(0)=0$ in \cite{HHLNT2017}.  After our work on the bound of the solution $u_\affine (t,x)$  we   want to argue  that
\begin{equation}
\EE\lk\sup_{x\in \bL}\cN^2_{\frac 12 -H}u_\affine(t,x)\rk\geq c_{t,H} \log_2(L)\quad   \hbox{if $L$ is
sufficently large}.
\end{equation}
This fact illustrates  that the  argument in \cite{HHLNT2017}   for the pathwise uniqueness   (see Lemma 4.9 in \cite{HHLNT2017} for this argument)  is not applicable in the general setting when $\sigma(0)\neq 0$.
Here is the precise statement of our  result, which is also interesting for its own sake.
\begin{proposition}\label{Prop.SupN}
  Let $u_\affine(t,x)$ be defined by  \eqref{LinearMildSol} and let
  $\cN_{\frac 12 -H}u_\affine(t,x)$ be
defined   by \eqref{e.3.defN}.
\begin{enumerate}
\item[(i)]   For any fixed $t>0$ and $L\geq \sqrt{t}$  we have
    \begin{equation}\label{LSupNEE}
        \EE \lk\sup_{-L\le x\le L} \cN^2_{\frac 12 -H}u_\affine(t,x)\rk\geq  { c_{t,H}} \, \log_2(L)\,,
    \end{equation}
where $c_{t, H}$ is a positive constant.
\item[(ii)] Moreover, we have almost surely if $L\geq \sqrt{t}$
    \begin{equation}\label{USupN_as}
        \sup_{-L\le x\le L} \cN_{\frac 12 -H}u_\affine(t,x)\leq C_{H}\, 
        t^{2H-\frac 12} [1- \log(\sqrt{t}\wedge 1)]\Psi_0(t,L)\,,
    \end{equation}
    where  $C_{H}$ is a positive random constant.
    \end{enumerate}
\end{proposition}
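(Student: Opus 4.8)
The plan is to treat, for each fixed $t>0$, the random field $x\mapsto\cN_{\frac12-H}u_\affine(t,x)$ from \eqref{e.3.defN} as the norm, in the weighted space $L^2(w)$ with $w(h)=|h|^{2H-2}$, of the centered Gaussian family $V_x(\cdot)$ defined by $V_x(h):=u_\affine(t,x+h)-u_\affine(t,x)$; that is, $\cN_{\frac12-H}u_\affine(t,x)=\|V_x\|_{L^2(w)}$. A preliminary Plancherel computation, exactly in the spirit of \eqref{NMPlancherel} and the proof of Theorem~\ref{LUHolderEESup}, gives $\EE|V_x(h)|^2=C_H\int_{\RR_+}(1-e^{-2t\xi^2})(1-\cos(h\xi))\xi^{-1-2H}d\xi$; integrating against $w$ and applying Fubini then shows that $\EE[\cN_{\frac12-H}^2u_\affine(t,x)]$ equals a finite constant $C_H\,t^{2H-\frac12}$, independent of $x$ by the spatial stationarity of increments. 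Both parts of the proposition rest on this structure combined with the tools already assembled in Section~\ref{s.additive}.

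For the lower bound (i) I would argue by duality. Fix any $g$ with $\|g\|_{L^2(w)}=1$ and set $\Xi_g(x):=\langle V_x,g\rangle_{L^2(w)}=\int_\RR[u_\affine(t,x+h)-u_\affine(t,x)]\,g(h)\,|h|^{2H-2}dh$, a centered Gaussian process in $x$. Since $\cN_{\frac12-H}u_\affine(t,x)=\|V_x\|_{L^2(w)}\ge|\Xi_g(x)|$ by Cauchy--Schwarz, we obtain $\sup_{|x|\le L}\cN_{\frac12-H}^2u_\affine(t,x)\ge(\sup_{|x|\le L}|\Xi_g(x)|)^2\ge(\sup_{|x|\le L}\Xi_g(x))^2$, and hence, by Jensen's inequality, $\EE[\sup_x\cN_{\frac12-H}^2u_\affine]\ge(\EE[\sup_x\Xi_g])^2$. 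I would then choose $g$ localized at the scale $|h|\simeq\sqrt t$, so that $\Xi_g(x)$ depends only on $u_\affine(t,\cdot)$ in a window of width $\simeq\sqrt t$ about $x$. A Plancherel computation of $\EE|\Xi_g(x)-\Xi_g(y)|^2$, analogous to the lower-bound analysis in Lemma~\ref{LemMetricProp}, should exhibit decorrelation past the scale $\sqrt t$ and thus a uniform lower bound $\delta_g=\delta_g(t,H)>0$ on the natural metric of $\Xi_g$ whenever $|x-y|\ge C\sqrt t$. Feeding the $\simeq L/\sqrt t$ equispaced points $x_j=jC\sqrt t$ with $|x_j|\le L$ into the Sudakov minoration theorem (Theorem~\ref{Sudakov}) gives $\EE[\sup_x\Xi_g]\ge c\,\delta_g\sqrt{\log_2(L/\sqrt t)}\ge c_{t,H}\sqrt{\log_2 L}$ for $L$ large, and squaring yields \eqref{LSupNEE}.

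For the upper bound (ii) I would dominate the weighted integral by pulling the spatial supremum inside. Writing $\Phi_h:=\sup_{|x|\le L}|u_\affine(t,x+h)-u_\affine(t,x)|$, the right-hand side below no longer depends on $x$, so
\[
\sup_{|x|\le L}\cN_{\frac12-H}^2u_\affine(t,x)\le\int_\RR\Phi_h^2\,|h|^{2H-2}\,dh\,.
\]
Taking expectations and using Tonelli reduces matters to estimating $\EE[\Phi_h^2]$ for each $h$. Gaussian concentration for suprema (Theorem~\ref{Borell}) gives $\EE[\Phi_h^2]\le(\EE\Phi_h)^2+C\,\EE|V_0(h)|^2$; here $\EE\Phi_h$ is controlled for small $|h|$ by the expectation estimate \eqref{SupHolderEEasmp} of Theorem~\ref{LUHolderEESup} and for large $|h|$ by $\Phi_h\le2\sup_{|x|\le L+|h|}|u_\affine(t,x)|$ together with \eqref{Supasmp.x}/Corollary~\ref{LUSupasmp}, while $\EE|V_0(h)|^2\simeq t^H\wedge|h|^{2H}$. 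Splitting the $h$-integral at $|h|=\sqrt t\wedge1$ and carrying out the two regimes produces the $t$-power and the logarithmic-in-$t$ factor $[1-\log(\sqrt t\wedge1)]$ appearing in \eqref{USupN_as}, times $\Psi_0^2(t,L)$. To upgrade this to the stated almost sure bound I would note that $\sup_{|x|\le L}\cN_{\frac12-H}u_\affine(t,x)=\sup_{|x|\le L,\ \|g\|_{L^2(w)}\le1}\Xi_g(x)$ is the supremum of a single centered Gaussian process, which is a.s. finite since $x\mapsto\cN_{\frac12-H}u_\affine(t,x)$ is a.s. continuous; Borell's inequality (Theorem~\ref{Borell}) then applies with the $L$-independent variance proxy $\sigma^2=\sup_{\|g\|\le1}\mathrm{Var}\,\Xi_g(0)\le\EE[\cN_{\frac12-H}^2u_\affine(t,0)]=C_H t^{2H-\frac12}$, and a Borel--Cantelli argument along $L=n^\alpha$, exactly as in the proof of the second part of Theorem~\ref{LUEESup}, delivers \eqref{USupN_as}.

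The hard part will be the metric estimate in (i): establishing the uniform lower bound $\delta_g>0$ on the canonical distance of the smoothed functional $\Xi_g$ for separations $|x-y|\gtrsim\sqrt t$, i.e.\ quantifying the decorrelation at the diffusive scale, which is the analogue of the delicate lower-bound computation in Lemma~\ref{LemMetricProp} and must be done with enough care that the resulting constant $c_{t,H}$ is genuinely positive. A secondary technical point is the bookkeeping of the two-regime $h$-integral in (ii), where the precise $t$-dependence and the factor $[1-\log(\sqrt t\wedge1)]$ emerge, and the verification that the Gaussian process indexed by $(x,g)$ over the unit ball of $L^2(w)$ is separable with a.s.\ bounded paths so that Borell's inequality is legitimately applicable.
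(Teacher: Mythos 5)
Your proposal is correct and is, in substance, the paper's own argument. For part (i) the two proofs coincide: the paper's smoothed field $u_\varrho(t,x)=\int_\RR \Delta_h u_\affine(t,x)\,\varrho(h)\,dh$ (with $\varrho$ the probability density chosen there) is exactly your dual functional $\Xi_g(x)$ for the fixed choice $g(h)=\varrho(h)|h|^{2-2H}$, which lies in $L^2(|h|^{2H-2}dh)$ up to a normalizing constant; the Cauchy--Schwarz-plus-Jensen reduction is identical, and the ``hard part'' you isolate --- a uniform positive lower bound on the canonical metric of the smoothed process --- is precisely what the paper establishes in \eqref{L|x-y|N}, namely $d_{4,t}^2(x,y)\ge c_H\,[1-e^{-2t}]$ once $|x-y|\ge 1$. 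Note the paper does not localize $g$ at scale $\sqrt t$: it keeps a fixed profile and separates the Sudakov test points at unit scale, which already yields $c_{t,H}\log_2 L$ after squaring. For part (ii) your decomposition (pull the supremum inside the $h$-integral, split at $|h|=\sqrt t\wedge 1$, control small $h$ by Theorem \ref{LUHolderEESup} and large $h$ by Theorem \ref{LUEESup}) is again the paper's; the only genuine divergence is how the almost sure statement is reached. The paper uses that the random constants in the almost sure bounds \eqref{Supasmp} and \eqref{SupasmpHolder} are uniform in $h$ and $L$, so these pathwise bounds can be inserted directly under the integral and the a.s.\ conclusion is immediate, with no further probabilistic step. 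You instead prove an expectation bound first and then upgrade via Borell's inequality applied to the process indexed by $(x,g)$ over the unit ball of $L^2(|h|^{2H-2}dh)$, plus Borel--Cantelli along $L=n^\alpha$; this can be made to work (the variance proxy $C_Ht^{2H-\frac12}$ is $L$-free, and the lower bound on the mean needed for summability comes from your part (i)), but it is a detour that creates exactly the separability and a.s.-boundedness verifications you flag at the end --- all of which the paper's direct substitution of its earlier almost sure theorems avoids.
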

\begin{proof}
 First, we    consider  the upper bound \eqref{USupN_as}.  Let    $0<\theta<\frac{1-2H}{2}$. 
Applying Theorem \ref{LUHolderEESup} when $|h|\leq \sqrt t \wedge 1$ and Theorem \ref{LUEESup} when $|h|>\sqrt t \wedge 1$, respectively,  {  and 
 using the notation $\Delta _h u_\affine(t,x):=u_\affine (t,x+h)-u_\affine(t,x)$} we obtain
  \begin{equation*}
    \begin{split}
       \sup_{x\in\bL} &\cN^2_{\frac 12 -H}u_\affine(t,x)=\sup_{x\in\bL} \int_{\RR}|\Delta _h u_\affine(t,x)|^2\cdot|h|^{2H-2} dh \\
       &\leq \int_{\RR} \lc\sup_{x\in\bL}|\Delta _h u_\affine(t,x)|\rc^2\cdot|h|^{2H-2} dh \\
       &\leq \int_{\{|h|\leq \sqrt t\wedge 1 \}} \lc\sup_{x\in\bL}|\Delta _h u_\affine(t,x)|\rc^2\cdot|h|^{2H-2} dh \\
       &\quad +\int_{\{|h|> \sqrt t\wedge 1\}} \lc\sup_{x\in\bL}|\Delta _h u_\affine(t,x)|\rc^2\cdot|h|^{2H-2} dh \\
       &\leq C_{H,\theta} t^{H-\theta}\Psi_0(t,L)\int_{\{|h|\leq \sqrt t\wedge 1\}} |h|^{2H-2+2\theta} dh +C_{H} t^{H}\Psi_0(t,L)\\
       &\quad \cdot\lk \int_{\{|h|> \sqrt t\wedge 1\}} |h|^{2H-2} dh+\int_{\{|h|>\sqrt t\wedge 1\}}\log_2(|h|/\sqrt t) |h|^{2H-2} dh\rk\,,
    \end{split}
  \end{equation*}
where we  applied an elementary inequality
\[
\left|\log_2\left|L+h\right|\right|\leq
\begin{cases}
\log_2(L)+ 1 &  \quad  \hbox{ when  $|h|\le 1$}\,;\\
\log_2(L)+\log_2(|h|)+1 & \quad
\hbox{  when  $|h|\ge 1$}\,. \\
\end{cases}
\]
Most of terms of above integrals can be evaluated easily except the one involving with $\log_2(|h|/\sqrt t)$, which equals to
\begin{align*}
	&\int_{\{|h|>\sqrt t\wedge 1\}}\blk\log_2(|h|)-\log_2(\sqrt t)\brk |h|^{2H-2} dh\\
	\leq& \int_{\{|h|>\sqrt t\wedge 1\}}\blk\log_2(|h|)-\log_2(\sqrt t\wedge 1)\brk |h|^{2H-2} dh\ls (\sqrt t\wedge 1)^{2H-1}[1- \log(\sqrt{t}\wedge 1)]\,.
\end{align*}
This yields \eqref{USupN_as}.

Now we turn to  the lower bound \eqref{LSupNEE}.
A simple observation  and an application of Jensen's inequality give
  \begin{equation}\label{SupNEELobs}
    \begin{split}
        &\EE\lk\sup_{x\in\bL} \cN^2_{\frac 12 -H}u_\affine(t,x)\rk \\
       \geq& c_H \EE\lk\lc\sup_{x\in\bL} \int_{\RR}\Delta _h u_\affine(t,x) \varrho(h) dh\rc^2\rk
       \geq c_H \lc\EE\lk\sup_{x\in\bL} \int_{\RR} \Delta _h u_\affine(t,x) \varrho(h) dh\rk\rc^2,
    \end{split}
  \end{equation}
  where $\varrho(h)=C_H\,\blk|h|^{2H-\frac 32}\1_{\{|h|\leq 1\}}+|h|^{2H-2}\1_{\{|h|> 1\}}\brk$ such that it is probability density.   Denote
  \begin{equation*}
    \begin{split}
       u_\varrho(t,x)&= \int_{\RR}\Delta _h u_\affine(t,x)  \varrho(h) dh 
         = \int_{0}^{t}\int_{\RR} \lc\int_{\RR}D_{t-s}(h,x-z) \varrho(h) dh\rc W(ds,dz)\,,
    \end{split}
  \end{equation*}
where $D_{t}(h,x)$ is defined in \eqref{FirDiff}. The above $u_\varrho(t,x)$  is a well-defined Gaussian random field
since $\varrho(h)$ is integrable for $\frac{1}{4}<H<\frac{1}{2}$.  Introduce  the induced natural metric
\[
d_{4,t}(x,y):=(\EE|u_\varrho(t,x)-u_\varrho (t,y)|^2)^{\frac 12}\,.
\]
We need to bound this  distance
 for $|x-y|\geq 1$. Applying Plancherel’s identity we can find
  \begin{equation*}
    \begin{split}
       d_{4,t}^2 (x,y) &=c_H \int_{\RR_+} [1-\exp(-2t\xi^2)][1-\cos(|x-y|\xi)] \\
         & \qquad\qquad\cdot \lc\int_{\RR_+}[1-\cos(h\xi)]\varrho(h) dh\rc^2\cdot \xi^{-1-2H}d\xi\,.
    \end{split}
  \end{equation*}
When  $\xi\geq 1$, we  have
  \[
   \int_{\RR_+}[1-\cos(h\xi)]\varrho(h) dh\geq \xi^{\frac 12-2H}\int_{0}^{\xi}[1-\cos(h)]\cdot h^{2H-\frac 32} dh\geq c \xi^{\frac 12-2H}.
  \]
Thus,  we   conclude  that  if $|x-y|\geq 1$, then
  \begin{align}\label{L|x-y|N}
    d_{4,t}^2 (x,y)&\geq c_{H}[1-\exp(-2t)] \int_{1}^{\infty}[1-\cos(|x-y|\xi)]\cdot \xi^{-6H}d\xi \nonumber\\
        &\geq c_{H}[1-\exp(-2t)]
  \end{align}
  by the same  argument as that  in proof of lower bound of $\EE[\sup_{x\in\bL} \Delta_h u_{\affine}(t,x)]$ in Theorem \ref{LUHolderEESup}.  An application of the Sudakov minoration Theorem \ref{Sudakov} implies the lower bound \eqref{LSupNEE}.
\end{proof}

\section{Weak Existence and Regularity of Solutions}
\label{weak}
 \subsection{Basic settings}
 This section is devoted to prove the existence of a  weak solution  to  $\eqref{SHE}$.
 Let us briefly recall some notations  and facts in  \cite{HHLNT2017}.   Let $(B,\| \cdot \|_B)$ be a Banach space  with the norm $\| \cdot \|_B$.  Let   $\beta\in(0,1)$ be a fixed number.
 For  any  function $f:\RR\rightarrow B$  denote
 \begin{equation}\label{NBNorm}
   \cN_{\beta}^{B}f(x)=\lt(\int_{\RR}\|f(x+h)-f(x)\|_B^2|h|^{-1-2\beta}dh\rt)^{\frac 12}\,,
 \end{equation}
 if the above quantity is finite.
 When $B=\RR$, we abbreviate the notation $\cN_{\beta}^{\RR}f$  as  $\cN_{\beta}f$ (see also \eqref{e.3.defN}).  
 As in \cite{HHLNT2017} throughout  this paper  we are particularly interested in the case $B=L^p(\Omega)$, and in this case we  denote $ \cN_{\beta}^{B}$ by $\cN_{\beta,p}$:
 \begin{equation}\label{NpNorm}
   \cN_{\beta,p}f( x)=\lt(\int_{\RR}\|f(x+h)-f(x)\|^2_{L^p(\Omega)} |h|^{-1-2\beta}dh\rt)^{\frac 12}.
 \end{equation}
The following    Burkholder-Davis-Gundy     inequality is well-known
(see  e.g.  \cite{HHLNT2017}).


 \begin{proposition}\label{HBDG}
   Let $W$ be the Gaussian noise defined by the covariance \eqref{CovStru}, and let    $f\in\Lambda_H$  be a predictable random field. Then for any $p\geq2$ we have
   \begin{equation}
        \lt\|\int_{0}^{t}\int_{\RR}f(s,y) W(ds,dy)\rt\|_{L^p(\Omega)}  
     \leq \sqrt{4p}\, c_{ H}\lt(\int_{0}^{t}\int_{\RR}\lk\cN_{\frac 12-H,p}f(s,y)\rk^2dyds\rt)^{\frac 12},\label{e.bdg}
   \end{equation}
   where $c_{ H}$ is a constant depending only on $H$ and
   $\cN_{\frac 12-H,p}f(s,y)$ denotes the application of $\cN_{\frac 12-H,p} $   with respect to  the space variable $y$.
 \end{proposition}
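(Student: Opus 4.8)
The plan is to exploit the fact that the noise $W$ is white in time, so that $M_t:=\int_0^t\int_\RR f(s,y)\,W(ds,dy)$ is, for fixed $f\in\Lambda_H$, a real continuous square-integrable martingale in the filtration $\cF_t$ with $M_0=0$. The inequality \eqref{e.bdg} will then follow by combining the classical Burkholder--Davis--Gundy inequality (with an explicit constant) with Minkowski's integral inequality. First I would reduce to elementary processes $f$ (Definition \ref{ElemP}): these are dense in $\Lambda_H$ by Proposition \ref{SI_Gene}, and since both sides of \eqref{e.bdg} are continuous in the appropriate norms, the general case follows by approximation.

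For such $f$, I would identify the quadratic variation of $M$. Using the isometry \eqref{Isometry} together with the representation \eqref{hinner.3} of the scalar product of $\HH$ restricted to the time interval $[0,t]$, the whiteness in time makes the temporal covariance Lebesgue measure while the spatial part carries the fractional weight $|y|^{2H-2}$, so that
\[
\langle M\rangle_t=c_{3,H}\int_0^t\!\int_{\RR^2}\bigl|f(s,x+y)-f(s,x)\bigr|^2\,|y|^{2H-2}\,dx\,dy\,ds .
\]
Next I would invoke the BDG inequality in the form $\|M_t\|_{L^p(\Omega)}\le \sqrt{4p}\,\bigl\|\langle M\rangle_t^{1/2}\bigr\|_{L^p(\Omega)}$, valid for continuous martingales and $p\ge 2$; this is the origin of the factor $\sqrt{4p}$, and it can be produced, for instance, by applying It\^o's formula to $|M_s|^p$ and combining Doob's maximal inequality with H\"older's inequality. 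Absorbing $c_{3,H}$ into $c_H$, it then remains only to bound $\bigl\|\langle M\rangle_t^{1/2}\bigr\|_{L^p(\Omega)}$.

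The final step is to push the $L^p(\Omega)$-norm inside the spatial integral. Writing $\langle M\rangle_t=\|g\|_{L^1(\mu)}$ with $g(s,x,y)=|f(s,x+y)-f(s,x)|^2$ and $d\mu=c_{3,H}\,|y|^{2H-2}\,dx\,dy\,ds$ on $[0,t]\times\RR^2$, one has $\bigl\|\langle M\rangle_t^{1/2}\bigr\|_{L^p(\Omega)}^2=\bigl\|\langle M\rangle_t\bigr\|_{L^{p/2}(\Omega)}$, and Minkowski's integral inequality (legitimate since $p/2\ge 1$) gives
\[
\bigl\|\langle M\rangle_t\bigr\|_{L^{p/2}(\Omega)}\le \int \|g\|_{L^{p/2}(\Omega)}\,d\mu
=c_{3,H}\int_0^t\!\int_{\RR^2}\bigl\|f(s,x+y)-f(s,x)\bigr\|_{L^p(\Omega)}^2\,|y|^{2H-2}\,dx\,dy\,ds .
\]
Recognizing the inner $y$-integral as $\bigl[\cN_{\frac12-H,p}f(s,x)\bigr]^2$ (recall $|y|^{2H-2}=|y|^{-1-2(\frac12-H)}$) reproduces exactly the right-hand side of \eqref{e.bdg}.

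I expect the genuinely delicate point to be this last interchange: it is precisely the use of Minkowski's inequality that produces $\cN_{\frac12-H,p}f$ with the $L^p(\Omega)$-norm sitting \emph{inside} the Gagliardo-type seminorm rather than outside, which is the form actually needed downstream. One must also verify that the martingale and quadratic-variation identification through \eqref{hinner.3} is legitimate for the (Hilbert-space-valued) integrands and survives the limiting procedure from elementary processes, and keep track that the BDG constant is no worse than $\sqrt{4p}$.
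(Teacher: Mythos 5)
Your proof is correct and is essentially the paper's own approach: the paper states this proposition without proof, citing \cite{HHLNT2017}, and the argument there is exactly the one you give --- whiteness in time makes $M_t=\int_0^t\int_\RR f\,dW$ a continuous $L^2$-martingale whose quadratic variation $\langle M\rangle_t$ is the time integral of the spatial $\HH$-norm via \eqref{hinner.3}, the classical Burkholder--Davis--Gundy inequality for continuous martingales supplies the constant $\sqrt{4p}$, and Minkowski's integral inequality (valid since $p/2\ge 1$) moves the $L^p(\Omega)$-norm inside the Gagliardo-type integral to produce $\cN_{\frac 12-H,p}f$. The one point worth tightening is your reduction step: rather than passing the $L^p$ inequality through the elementary-process limit (convergence in $\Lambda_H$ is $L^2$-type and does not control either side of \eqref{e.bdg} for $p>2$), apply BDG directly to the martingale $M$ itself, since the martingale property and the identification of $\langle M\rangle$ are what survive the $L^2$ limit from elementary processes.
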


 In the work  \cite{HHLNT2017}, the authors have already proved the  existence and uniqueness result
 in a solution space $\cZ^p_T$
 (see \cite{HHLNT2017} or  formula  \eqref{ZNorm} in next paragraph for the definition of  $\cZ^p_T$)   under the condition $\sigma(t,x,0)=0$.  When $\sigma(t,x, 0)\not=0
$ or even in the simplest case $\sigma (t,x, u)=1$ (as we see from \eqref{LSupNEE})   we cannot expect that the solution  is still in $\cZ^p_T$.  So,  the method
 powerful in \cite{HHLNT2017} is no longer valid to solve   \eqref{SHE}  for  general
 $\sigma(t,x,u)$.  Our idea is to add an appropriate weight $\lambda(x)$ to the  space
 $\cZ^p_T$ to obtain a weighted space $\cZ^p_{\lambda, T}$.

 Let  $\lambda(x)\ge 0$ be a Lebesgue integrable positive function  with $\int_{\RR}\lambda(x)dx=1$.  Introduce a norm $\|\cdot\|_{\cZ_{\lambda,T}^p}$ for a random field $v(t,x)$ as follows:
 \begin{equation}\label{ZNorm}
   \|v\|_{\cZ_{\lambda,T}^p} :=\sup_{t\in[0,T]} \|v(t,\cdot)\|_{L^p_{\lambda}(\Omega\times\RR)}+\sup_{t\in[0,T]}\cN^*_{\frac 12-H,p}v(t),
 \end{equation}
 where $p\geq 2$, $\frac 14<H<\frac 12$,
 \[
  \|v(t,\cdot)\|_{L^p_{\lambda}(\Omega\times\RR)}=\lc\int_{\RR} \EE\lk|v(t,x)|^p\rk \lambda(x)dx\rc^{\frac 1p},
 \]
 and
 \begin{equation}\label{N*pNorm}
   \cN^*_{\frac 12-H,p}v(t)=\lc\int_{\RR}\|v(t,\cdot)-v(t,\cdot+h)\|^2_{L^p_{\lambda}(\Omega\times\RR)}|h|^{2H-2}dh\rc^{\frac 12}.
 \end{equation}
 Then $\cZ_{\lambda,T}^p$ is the function space consisting of  all the random fields $v=v(t,x)$ such that $\|v\|_{\cZ_{\lambda,T}^p}$ is finite.   When the function is independent of $t$, the corresponding space is denoted by  $\cZ_{\lambda,0}^p$.

 \subsection{Some bounds for stochastic convolutions} To  prove the   existence of weak  solution, we need some delicate estimates of stochastic integral with respect to the weight.
 \begin{proposition}\label{CharProp} Denote the weight function
   \begin{equation}\label{lamd}
     \lambda(x)=\lambda_H(x)= c_H(1+|x|^2)^{H-1}\,,
   \end{equation}
where $c_H$ is a constant such that $\int \lambda(x) dx=1$,      and  denote
   \begin{equation}\label{Phiprop}
     \Phi(t,x)=\int_{0}^{t}\int_{\RR} G_{t-s}(x-y)v(s,y)W(ds,dy).
   \end{equation}
We have the following estimates.
 (In the following $C_{ T,p,H, \gamma}$  denotes a constant,    depending only  on $T$, $p$, $H$ and   $\gamma$).
   \begin{enumerate}[leftmargin=*]
     \item[\textbf{(i)}] If $p>\frac{3}{H}$, then
     \begin{equation}\label{EstZl}
       \Big\|\sup\limits_{t\in[0,T], x\in \RR}\lambda^{\frac{1}{p}}(x) \Phi(t,x)\Big\|_{L^p(\Omega)}\leq C_{ T,p,H}\|v\|_{\cZ_{\lambda,T}^p}\,.
     \end{equation}
     \item[\textbf{(ii)}] If $p>\frac{6}{4H-1}$, then
     \begin{equation}\label{EstZlN}
       \Big\|\sup\limits_{t\in[0,T], x\in \RR}\lambda^{\frac{1}{p}}(x) \cN_{\frac 12-H}\Phi(t,x)\Big\|_{L^{ p}(\Omega)}\leq C_{ T,p,H}\|v\|_{\cZ_{\lambda,T}^p}\,.
     \end{equation}
     \item[\textbf{(iii)}] If $p>\frac{3}{H}$,
     and  $0<\gamma<\frac H2-\frac{3}{2p}$,  then
     \begin{equation}\label{EstZ3}
       \Big\|\sup\limits_{\substack{t,t+h\in[0,T] \\ x\in \RR}}\lambda^{\frac{1}{p}}(x) \blk\Phi(t+h,x)-\Phi(t,x)\brk\Big\|_{L^p(\Omega)}\leq C_{ T,p,H, \gamma} |h|^{\gamma}\|v\|_{\cZ_{\lambda,T}^p}\,.
     \end{equation}
     \item[\textbf{(iv)}] If $p>\frac{3}{H}$,
     and  $0<\gamma<H-\frac{3}{p}$,  then
  \begin{equation}\label{EstZ4}
       \lt\|\sup\limits_{\substack{t\in[0,T] \\ x,y\in \RR}}\frac{\Phi(t,x)-\Phi(t,y)}{\lambda^{-\frac{1}{p}}(x)+\lambda^{-\frac{1}{p}}(y)} \rt\|_{L^p(\Omega)}\leq  C_{ T,p,H, \gamma }|x-y|^{\gamma}\|v\|_{\cZ_{\lambda,T}^p}\,.
     \end{equation}
   \end{enumerate}
 \end{proposition}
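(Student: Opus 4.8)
The plan is to prove all four estimates by the factorization method. Fix an auxiliary parameter $\alpha\in(0,1)$. Using the semigroup property of $G$ together with the Beta-type identity $\int_r^t(t-s)^{\alpha-1}(s-r)^{-\alpha}\,ds=\pi/\sin(\pi\alpha)$ and a stochastic Fubini argument, I would rewrite \eqref{Phiprop} as
\[
\Phi(t,x)=c_\alpha\int_0^t (t-s)^{\alpha-1}\int_\RR G_{t-s}(x-y)\,Y_\alpha(s,y)\,dy\,ds,\qquad c_\alpha=\frac{\sin(\pi\alpha)}{\pi},
\]
where
\[
Y_\alpha(s,y)=\int_0^s (s-r)^{-\alpha}\int_\RR G_{s-r}(y-z)\,v(r,z)\,W(dr,dz).
\]
The whole point of this representation is that the supremum over $(t,x)$ that appears inside every $L^p(\Omega)$-norm in \eqref{EstZl}--\eqref{EstZ4} now acts only on the deterministic kernel $\lambda^{1/p}(x)G_{t-s}(x-y)$ (or its spatial/temporal increments), while the random object that remains is the single auxiliary field $Y_\alpha$, for which only a \emph{weighted $L^p$ moment} — not a supremum — has to be controlled. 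Thus each of (i), (iii), (iv) splits into a deterministic weighted kernel bound and a uniform bound for $\EE\int_\RR\lambda(y)|Y_\alpha(s,y)|^p\,dy$.

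For the deterministic part of (i), (iii), (iv) I would insert $\lambda^{1/p}(x)G_{t-s}(x-y)=\big[\lambda^{1/p}(x)\lambda^{-1/p}(y)G_{t-s}(x-y)\big]\lambda^{1/p}(y)$ and apply H\"older's inequality in $y$ with exponents $p$ and $q=p/(p-1)$; the weighted ratio $\lambda^{1/p}(x)\lambda^{-1/p}(y)$ is absorbed exactly by the heat-kernel comparison of Lemma \ref{TechLemma1} (and its increment analogues), leaving a factor of the form $(t-s)^{-\beta}$ with $\beta=\tfrac1{2}(1-\tfrac1p)$ and a uniform-in-$x$ constant. The outer time integral $\int_0^t(t-s)^{\alpha-1-\beta}\,ds$ then converges precisely when $\alpha$ can be chosen with $\beta<\alpha<\tfrac12$ together with the moment integrability of $Y_\alpha$, and optimizing over $\alpha$ is what produces the threshold $p>\tfrac3H$ in (i). The temporal and spatial H\"older estimates (iii) and (iv) are handled by the same scheme applied to the increments $G_{t+h-s}-G_{t-s}$ and $G_{t-s}(x-\cdot)-G_{t-s}(y-\cdot)$ respectively; the extra powers $|h|^\gamma$, $|x-y|^\gamma$ come from interpolating these kernel increments, and the cost of trading a derivative for the uniform $x$-bound (a fractional-Sobolev/Garsia--Rodemich--Rumsey trade-off) is exactly what subtracts $\tfrac{3}{2p}$ and $\tfrac3p$ in the admissible ranges of $\gamma$. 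The needed moment bound on $Y_\alpha$ itself comes from the Burkholder--Davis--Gundy inequality (Proposition \ref{HBDG}): the product rule for $\cN_{\frac12-H,p}$ applied to $G_{s-r}(y-\cdot)v(r,\cdot)$ yields one term carrying the first-order kernel difference $D_{s-r}$ (controlled after weighting by the first inequality of Lemma \ref{LemDGLcontra}, giving a factor $(s-r)^{H-1}$) and one term carrying $\cN^*_{\frac12-H,p}v$ from the norm \eqref{ZNorm}, so that $\sup_s\big(\EE\int_\RR\lambda(y)|Y_\alpha(s,y)|^p\,dy\big)^{1/p}\lesssim\|v\|_{\cZ_{\lambda,T}^p}$ once $\alpha$ is below $H$.

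The hard part will be (ii), the weighted fractional-derivative estimate \eqref{EstZlN}, and this is where the methods of \cite{HHLNT2017} genuinely have to be upgraded. Here one must bound $\cN_{\frac12-H}\Phi(t,x)$, i.e. the square $y$-increments of $\Phi$; applying $\cN_{\frac12-H}$ in the spatial variable together with the spatial increment already present in the BDG/product-split step makes the \emph{second}-order difference $\Box_{t-s}(x-y,\eta,h)$ of the heat kernel appear, rather than the first-order $D$. Controlling this against the weight is exactly the content of the second inequality of Lemma \ref{LemDGLcontra} (equivalently Lemma \ref{LemBGcontra}), which supplies the sharp bound carrying the stronger time singularity $(t-s)^{2H-\frac32}$. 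This singularity is considerably worse than the $(t-s)^{H-1}$ of the first-order case, and it is precisely the requirement that $\int_0^t(t-s)^{\alpha-1}(t-s)^{2H-\frac32}\,ds$ and the accompanying moment integral remain finite after the H\"older/BDG bookkeeping that forces the more restrictive range $p>\tfrac{6}{4H-1}$. I therefore expect the main obstacle to be organizing the double increment (in the outer fractional derivative and in the product rule) so that the weight $\lambda$ can be threaded through both the $\Box$-kernel and $Y_\alpha$ simultaneously, using Lemma \ref{LemDGLcontra} in the sharp form; once this estimate is in place, the supremum in \eqref{EstZlN} follows from the same factorization-plus-H\"older mechanism as in the other three parts.
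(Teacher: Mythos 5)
Your overall route is the same as the paper's: the factorization representation with the auxiliary field $Y_\alpha$ (the paper's $J_\alpha$), absorption of the weight by the heat kernel via Lemma \ref{TechLemma1}, the Burkholder--Davis--Gundy/product-rule split of the moment of $Y_\alpha$ into a first-order kernel-difference term (first inequality of Lemma \ref{LemDGLcontra}) plus a term carrying $\cN^*_{\frac 12-H,p}v$, and, for part \textbf{(ii)}, putting the spatial increment onto $Y_\alpha$ so that the second-order difference $\Box$ appears and is handled by Lemma \ref{LemBGcontra} and the second inequality of Lemma \ref{LemDGLcontra}, whose singularity $(t-s)^{2H-\frac 32}$ is what forces $p>\frac{6}{4H-1}$. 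This is exactly the architecture of the paper's Steps 1--6 (for \textbf{(iii)} you would also need the two pieces you do not mention: the difference of the time weights $(t+h-r)^{\alpha-1}-(t-r)^{\alpha-1}$ and the boundary integral over $[t,t+h]$).

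However, your exponent bookkeeping has genuine errors, and as written part \textbf{(i)} cannot be closed. (a) The weighted kernel cost is $\big(\int_\RR G^q_{t-s}(x-z)\lambda^{-q/p}(z)\,dz\big)^{1/q}\lesssim (t-s)^{-\frac{1}{2p}}\lambda^{-\frac 1p}(x)$ (using $G^q_t=C_q\,t^{\frac{1-q}{2}}G_{t/q}$ and Lemma \ref{TechLemma1}), not $(t-s)^{-\beta}$ with $\beta=\frac 12(1-\frac 1p)=\frac{1}{2q}$; with your $\beta$ the window $\beta<\alpha<\frac H2$ needed below is empty, since it forces $p<\frac{1}{1-H}<2$. (b) You cannot treat the outer time integral as simply "convergent": $\|Y_\alpha(r,\cdot)\|_{L^p_\lambda(\RR)}$ is a random function of $r$, so to use only moments of $Y_\alpha$ --- no supremum in time, which is the whole point of your reduction --- you must apply H\"older in $dr$, pairing $(t-r)^{\alpha-1-\frac{1}{2p}}$ in $L^q(dr)$ against $\|Y_\alpha(r,\cdot)\|_{L^p_\lambda(\RR)}$ in $L^p(dr)$; the $L^q(dr)$ norm is finite iff $\alpha>\frac{3}{2p}$, and this extra $\frac 1p$ is where the "$3$" in the thresholds comes from. (c) The moment bound on $Y_\alpha$ holds for $\alpha<\frac H2$, not for "$\alpha$ below $H$": the factorization weight enters the BDG integral squared, producing singularities $(r-s)^{-2\alpha+H-1}$ and $(r-s)^{-2\alpha-\frac 12}$; similarly, in \textbf{(ii)} the singularity appears inside the moment bound of $\Delta_h Y_\alpha$ as $(r-s)^{-2\alpha+2H-\frac 32}$, not in the outer integral as you wrote, giving the constraint $\alpha<H-\frac 14$. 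With these corrections the windows $\frac{3}{2p}<\alpha<\frac H2$ (part \textbf{(i)}) and $\frac{3}{2p}<\alpha<H-\frac 14$ (part \textbf{(ii)}) are nonempty exactly when $p>\frac 3H$ and $p>\frac{6}{4H-1}$, which are the thresholds you asserted but whose derivation, in your sketch, does not go through.
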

 \begin{remark}
The method provided in the following proof  depends on the semigroup property of the heat kernel because we need to use the factorization method (e.g. \cite{GJ2014}. see also 
\eqref{e.4.12} below). This means that we can not apply our  approach directly to the stochastic wave equation since  the wave  kernel (the fundamental solution of the wave equation in \tcg{\cite{BJQ2015}}) lacks  the semigroup property.
 \end{remark}
 \begin{proof} For  any $\alpha\in(0,1)$ we set
 \begin{equation}
      J_{\alpha}(r,z):=\int_{0}^{r}\int_{\RR} (r-s)^{-\alpha} G_{r-s}(z-y)v(s,y)W(ds,dy).\label{e.def_J}
 \end{equation}
 A stochastic version of Fubini's theorem implies
    \begin{equation}
      \Phi(t,x)=\frac{\sin(\pi\alpha)}{\pi}\int_{0}^{t}\int_{\RR}(t-r)^{\alpha-1}G_{t-r}(x-z)J_{\alpha}(r,z)dzdr.
      \label{e.4.12}
    \end{equation}
 We are going to show the four different parts of the  proposition separately.  We divide our proof into six steps.  { Let us recall $D_t(x,h):=G_t(x+h)-G_t(x)$, and $\Box_t(x,y,h)=D_t(x+y,h)-D_t(x,h)$ defined in \eqref{FirDiff} and \eqref{SecDiff}.
 }

 \noindent \textbf{Step 1.} \   The first two steps are to
  prove part  \textbf{(i)}.
  In this step we will obtain the desired   growth estimate of  $\Phi(t,x)$ in term of $J_{\alpha}(r,z)$.   Applying  the bounds of   \eqref{DGcontra_t} and  \eqref{Glamd}  to   
     \eqref{e.4.12}   we have
     \begin{equation*}
       \begin{split}
       &\sup_{t,x} \lambda^{\theta}(x)\left| \Phi(t,x)\right| \\
         \es  &\sup_{t,x} \lambda^{\theta}(x)\left|\int_{0}^{t}\int_{\RR}(t-r)^{\alpha-1}G_{t-r}(x-z)J_{\alpha}(r,z)dzdr\right| \\
          \ls&\sup_{t,x} \lambda^{\theta}(x)\int_{0}^{t}(t-r)^{\alpha-1}\lc\int_{\RR}|G_{t-r}(x-z)\lambda^{-\frac 1p}(z)|^{q}dz \rc^{\frac 1q} \|J_{\alpha}(r,\cdot)\|_{L_{\lambda}^p(\RR)}dr  \\
           \ls&\sup_{t,x} \lambda^{\theta}(x)\int_{0}^{t}(t-r)^{\alpha-1}\lc\int_{\RR}
           (t-r)^{ \frac{1-q}{2}}
           G_{(t-r)/q}(x-z)  \lambda^{-\frac qp}(z)dz \rc^{\frac 1q} \|J_{\alpha}(r,\cdot)\|_{L_{\lambda}^p(\RR)}dr  \\
          \ls&\sup_{t,x} \lambda^{\theta}(x)\int_{0}^{t}(t-r)^{\alpha-1}\cdot (t-r)^{\frac{1-q}{2q}}\lambda^{-\frac 1p}(x)\cdot \|J_{\alpha}(r,\cdot)\|_{L_{\lambda}^p(\RR)}dr\,.
       \end{split}
     \end{equation*}
Setting  $\theta=\frac 1p$ and  then applying the H\"{o}lder
inequality  we obtain
     \begin{align}
          \sup_{t,x}\lambda^{\theta}(x)|\Phi(t,x)| &\lesssim \sup_{t\in[0,T]} \int_{0}^{t}(t-r)^{\alpha-\frac 32+\frac{1}{2q}}\cdot \|J_{\alpha}(r,\cdot)\|_{L_{\lambda}^p(\RR)}dr \nonumber\\
            &\lesssim \sup_{t\in[0,T]}\lk\int_{0}^{t}(t-r)^{q(\alpha-\frac 32+\frac{1}{2q})}dr\rk^{\frac 1q}\cdot \lk\int_{0}^{T} \|J_{\alpha}(r,\cdot)\|^p_{L_{\lambda}^p(\RR)} dr\rk^{\frac 1p} \nonumber\\
            &\lesssim   \lk\int_{0}^{T} \|J_{\alpha}(r,\cdot)\|^p_{L_{\lambda}^p(\RR)} dr\rk^{\frac 1p} \label{e.4.14}
     \end{align}
if  $q(\alpha-\frac 32+\frac{1}{2q})>-1$, i.e.  if
\begin{equation}
\alpha>\frac{3}{2p}\,.  \label{e.4.alpha1}
\end{equation}
This is possible if  $p>3/2$. Thus,   to prove part \textbf{(i)}, we only need to show that
there exists a constant $C $, independent of $r\in [0, T]$,
  such that
     \begin{equation}\label{CharEst1}
      \EE \|J_{\alpha}(r,\cdot)\|^p_{L_{\lambda}^p(\RR)}\leq C  \|v\|^p_{\cZ_{\lambda,T}^p}.
     \end{equation}

\noindent \textbf{Step 2.} \      We shall   prove
the above bound \eqref{CharEst1}
in this step and
   to do this let us introduce the following two notations
     \begin{align*}
      \cD_1(r,z) &:=\bigg(\int_{0}^{r}\int_{\RR^2}(r-s)^{-2\alpha} \big|D_{r-s}(y,h)\big|^2  \|v(s,y+z)\|_{L^p(\Omega)}^2|h|^{2H-2}dhdyds\bigg)^{\frac p2}\,,
     \end{align*}
 and  
     \begin{align*}
      \cD_2(r,z) &:=\bigg( \int_{0}^{r}\int_{\RR^2}(r-s)^{-2\alpha}|G_{r-s}(y)|^2 \|\Delta_h v(s,y+z)\|_{L^p(\Omega)}^2 |h|^{2H-2} dhdyds\bigg)^{\frac p2}\,,
     \end{align*}
     where  $\Delta_h v(t,x):=v(t,x+h)-v(t,x)$. 
From the definition \eqref{e.def_J} of $J$ and  by  Burkholder-Davis-Bundy's  inequality \eqref{e.bdg} stated in  Lemma \ref{HBDG}, we  have
  \begin{align*}
  \EE \|J_{\alpha}(r,\cdot)\|^p_{L_{\lambda}^p(\RR)}
   &\ls \int_\RR \bigg\{ \int_0^r \int_{\RR^2}
   (r-s)^{-2\alpha} \bigg[  \EE \big|G_{r-s}( y+h-z)v(s,y+h)\nonumber\\
    &\qquad -G_{r-s}(y-z)v(s,y)\big|^p \bigg]^{2/p}  h^{2H-2} dhdyds \bigg\}^{p/2}  \lambda(z) dz \nonumber\\
    & \lesssim  \int_{\RR}\blk\cD_1(r,z)+\cD_2(r,z)\brk\lambda(z) dz
 {    =:  D_1+ D_2 \,.}
\nonumber
  \end{align*}
   For the first term $I_1$, thanks to Minkowski's inequality, we  have
\begin{align} \label{CharEst1D1}
{      D_1 }
         \lesssim&  \bigg(\int_{0}^{r}\int_{\RR^2}(r-s)^{-2\alpha} \big|D_{r-s}(y,h)\big|^2 \cdot\|\Delta_y v(s,\cdot)\|_{L^p_{\lambda}(\Omega\times\RR)}^2 |h|^{2H-2}dhdyds\bigg)^{\frac p2} \nonumber\\
           +&\bigg(\int_{0}^{r}\int_{\RR^2}(r-s)^{-2\alpha} \big|D_{r-s}(y,h)\big|^2\cdot\|v(s,\cdot)\|_{L^p_{\lambda}(\Omega\times\RR)}^2|h|^{2H-2}dhdyds\bigg)^{\frac p2} \nonumber\\
        \lesssim&\bigg(\int_{0}^{r}\int_{\RR}(r-s)^{-2\alpha-\frac 12} \|\Delta_y v(s,\cdot)\|_{L^p_{\lambda}(\Omega\times\RR)}^2|y|^{2H-2}dyds\bigg)^{\frac p2} \nonumber\\
            &\qquad\quad +\bigg(\int_{0}^{r}(r-s)^{-2\alpha+H-1} \|v(s,\cdot)\|_{L^p_{\lambda}(\Omega\times\RR)}^2ds\bigg)^{\frac p2}\,,
     \end{align}
     where the last inequality follows from   inequalities
     \eqref{DGcontra_t} and \eqref{FirDiffIneq}.

     For the second term $I_2$, we can again use Minkowski's inequality,  Jensen's inequality
     with respect to $(r-s)^{1/2} G_{r-s}^2(y)dy\es G_{\frac{r-s}{2}}(y)dy$ (since when $p>2$, the function $\phi(x)=x^{2/p}$, $x>0$,   is  concave),  and then we use     \eqref{Glamd} to obtain
     \begin{align}\label{CharEst1D2}
  {   D_2   }    \ls & \int_0^r \int_{\RR ^2} (r-s)^{-2\alpha} G_{r-s}^2(y)
          \bigg(\int_\RR \|\Delta_h v(s,y+z)\|_{L^p(\Omega)}^p \lambda (z) dz \bigg)^{2/p} |h|^{2H-2} dydh ds \nonumber\\
          \lesssim&  \int_{0}^{r} \int_{\RR}(r-s)^{-2\alpha-\frac 12} \bigg(\int_{\RR}\int_{\RR}G_{\frac {r-s}{2}}(y)\|\Delta_h v(s,z)\|_{L^p(\Omega)}^pdz  \cdot\lambda(z-y) dy\bigg)^{\frac 2p} |h|^{2H-2}dhds \nonumber\\
          \lesssim&  \int_{0}^{r}\int_{\RR}(r-s)^{-2\alpha-\frac 12} \|\Delta_h v(s,\cdot)\|_{L^p_{\lambda}(\Omega\times\RR)}^2|h|^{2H-2}dhds \,.
     \end{align}
Recall that 
   \[
    \|v\|_{\cZ_{\lambda,T}^p} :=\sup_{s\in[0,T]} \|v(s,\cdot)\|_{L^p_{\lambda}(\Omega\times\RR)}+\sup_{s\in[0,T]}\cN^*_{\frac 12-H,p}v(s)\,,
   \]
   where $\cN^*_{\frac 12-H,p}v(t)$ is defined in (\ref{N*pNorm}).  The estimates obtained in  (\ref{CharEst1D1}) and (\ref{CharEst1D2})   imply
   \begin{equation}
     \begin{split}
        \EE \|J_{\alpha}(r,\cdot)\|^p_{L_{\lambda}^p(\RR)}&\ls \|v\|_{\cZ_{\lambda,T}^p}^p\bigg(\int_{0}^{r} (r-s)^{-2\alpha-\frac 12}         +(r-s)^{-2\alpha+H-1}dr \bigg)^{\frac p2}\,.
     \end{split}
   \end{equation}
   If we have $-2\alpha+H-1>-1$ and $-2\alpha-\frac 12>-1$, i.e. $\alpha<\frac H2$, then (\ref{CharEst1}) follows.

  However, the condition $\alpha<H/2$ should be combined
  with \eqref{e.4.alpha1}. This gives $
    \frac{3}{2p}<\alpha<\frac H2$ which implies
    $p>\frac 3H$.  Thus,  under the  condition of the proposition, the  inequality \eqref{CharEst1} holds true. This  finishes  the proof of \textbf{(i)}.

\noindent\textbf {Step 3.} \ In this and next steps  we   prove
     \textbf{(ii).}   The spirit of the proof is   similar to that of the proof of \textbf{(i)} but is  more involved. In order to obtain the desired decay    rate of $\cN_{\frac 12-H}\Phi(t,x)$, we still use the equation  \eqref{e.4.12} to express $\Phi(t,x)$   by $J$.
     \begin{equation*}
       \begin{split}
          & \Phi(t,x+h)-\Phi(t,x)\\
          =&\frac{\sin(\pi\alpha)}{\pi}\int_{0}^{t}\int_{\RR}(t-r)^{\alpha-1} D_{t-r}(x-z,h) J_{\alpha}(r,z)dzdr\\
          =&\frac{\sin(\pi\alpha)}{\pi}\int_{0}^{t}\int_{\RR}(t-r)^{\alpha-1}G_{t-r}(x-z)\Delta_h J_{\alpha}(r,z) dzdr\,,
       \end{split}
     \end{equation*}
     where $\Delta_h J_{\alpha}(t,x):=J_{\alpha}(t,x+h)-J_{\alpha}(t,x)$.
     
   Invoking   Minkowski's inequality and then H\"{o}lder's  inequality with $\frac 1p + \frac 1q=1$  we get
     \begin{align*}
      &\int_{\RR}|\Phi(t,x+h)-\Phi(t,x)|^2|h|^{2H-2}dh\\
      \es  &\int_{\RR}\bigg|\int_{0}^{t}\int_{\RR}(t-r)^{\alpha-1}G_{t-r}(x-z)\Delta_h J_{\alpha}(r,z) dzdr\bigg|^2\cdot |h|^{2H-2}dh \\
      \ls&\bigg(\int_{0}^{t}\int_{\RR}(t-r)^{\alpha-1}G_{t-r}(x-z)\Big[\int_{\RR}|\Delta_h J_{\alpha}(r,z)|^2|h|^{2H-2}dh\Big]^{\frac 12}dzdr\bigg)^2   \\
         \ls & \left(\int_{0}^{t}\int_{\RR}(t-r)^{q(\alpha-1)}G_{t-r}^q (x-z) \lambda^{-\frac qp}(z) dzdr\right)^{\frac 2p}  \\
         &\qquad\qquad\times
         \lc\int_{0}^{T}\int_{\RR}\Blk\int_{\RR}|\Delta_h J_{\alpha}(r,z)|^2|h|^{2H-2}dh\Brk^{\frac p2}\lambda(z)dz dr\rc^{\frac 2p}  \\
          \ls& \lambda(x)^{-\frac 2p}\lk\int_{0}^{t}(t-r)^{q(\alpha-\frac 32+\frac{1}{2q})}dr\rk^{\frac 2q}\times \lc\int_{0}^{T}\int_{\RR}\Blk\int_{\RR}|\Delta_h J_{\alpha}(r,z)|^2|h|^{2H-2}dh\Brk^{\frac p2}\lambda(z)dz dr\rc^{\frac 2p}\,,
     \end{align*}
where in the above last inequality we used
$G_{t-r}^q (x-z) =(t-r)^{\frac{1-q}2 } G_{\frac{t-r} {q}}  (x-z)$ and inequality \eqref{Glamd}.     If we take $\theta=\frac 1p$, and $q(\alpha-\frac 32+\frac{1}{2q})>-1$, i.e.
\begin{equation}
\alpha>\frac{3}{2p}\,, \label{e.4.alpha2}
\end{equation}  then
     \begin{align}
       &\sup_{t,x}\lambda(x)^{\theta}\lc\int_{\RR}|\Phi(t,x+h)-\Phi(t,x)|^2|h|^{2H-2}dh\rc^{\frac 12} \nonumber \\
       \lesssim& \lc\int_{0}^{T}\int_{\RR}\Blk\int_{\RR}|\Delta_h J_{\alpha}(r,z)|^2|h|^{2H-2}dh\Brk^{\frac p2}\lambda(z)dz dr\rc^{\frac 1p}.\nonumber
     \end{align}
Thus, to prove part \textbf{(ii)}  we only need
  to prove that there exists some constant $C_1$, independent of $r\in [0, T]$,  such that
     \begin{equation}\label{CharEst2}
     \cI:=  \EE \int_{\RR}\Blk\int_{\RR}|\Delta_h J_{\alpha}(r,z)|^2|h|^{2H-2}dh\Brk^{\frac p2}\lambda(z)dz\leq C_1  \|v\|^p_{\cZ_{\lambda,T}^p}.
     \end{equation}

\noindent\textbf{Step 4}. \ In this step we   show the above inequality \eqref{CharEst2}.  By the definition \eqref{e.def_J} of $J$ and by an application of   Minkowski's inequality 
we have
     \begin{equation*}
       \begin{split}
          \cI
         \lesssim& \lc\int_{\RR}\Blk\int_{\RR}\EE|\Delta_h J_{\alpha}(r,z)|^p\lambda(z)dz\Brk^{\frac 2p}|h|^{2H-2}dh\rc^{\frac p2} \\
         \lesssim& \bigg(\int_{\RR} \Blk\int_{\RR}\EE\Blc\int_{0}^{r}\int_{\RR^2}(r-s)^{-2\alpha}\big|D_{r-s}(z-y-l,h)v(s,y+l)\\
            &\qquad-D_{r-s}(z-y,h)v(s,y)\big|^2|l|^{2H-2}dldyds\Brc^{\frac p2}\lambda(z)dz\Brk^{\frac 2p} |h|^{2H-2}dh\bigg)^{\frac p2}.
       \end{split}
     \end{equation*}
We introduce two notations:
     \begin{align*}
     \cI_1(r,z,h)&:= \EE \bigg(\int_{0}^{r}\int_{\RR^2}(r-s)^{-2\alpha}\big| D_{r-s}(z-y,h)\big|^2 \times\big|\Delta_{l} v(s,y) \big|^2 |l|^{2H-2}dldyds\bigg)^{\frac p2},
     \end{align*}
     and
     \begin{align*}
     \cI_2(r,z,h)&:= \EE \bigg(\int_{0}^{r}\int_{\RR^2} (r-s)^{-2\alpha}\Big|\Box_{r-s}(z-y,l,h)\Big|^2 \times\big|v(s,y)\big|^2|l|^{2H-2} dldyds\bigg)^{\frac p2}\,.
     \end{align*}
Then,  we have
     \begin{align*}
      &\EE \int_{\RR}\Blk\int_{\RR}|\Delta_h J_{\alpha}(r,z)|^2|h|^{2H-2}dh\Brk^{\frac p2}\lambda(z)dz\\
      \lesssim& \lc\int_{\RR} \Blk\int_{\RR}\cI_1(r,z,h)\lambda(z)dz\Brk^{\frac 2p}|h|^{2H-2}dh\rc^{\frac p2} \\
      &\qquad \qquad+\lc\int_{\RR} \Blk\int_{\RR}\cI_2(r,z,h)\lambda(z)dz\Brk^{\frac 2p}|h|^{2H-2}dh\rc^{\frac p2}
      =:I_1^{p/2}+I_2^{p/2}.
     \end{align*}
We shall bound $I_1$ and $I_2$ one by one.
For the first term, a change of variable $y\to z-y$ and an application of  Minkowski's inequality yield
     \begin{align}\label{CharEst2I1.0}
        I_1 \lesssim& \int_\RR \bigg|\int_\RR\EE \bigg(\int_{0}^{r}\int_{\RR^2}(r-s)^{-2\alpha}\big| D_{r-s}(y,h)\big|^2 \nonumber\\
          &\qquad\quad\times\big|\Delta_{l} v(s,y+z) \big|^2 |l|^{2H-2}dldyds\bigg)^{\frac p2}\lambda(z) dz\bigg|^{\frac 2p}|h|^{2H-2} dh \nonumber\\
          \lesssim& \int_{0}^{r}\int_{\RR^3}(r-s)^{-2\alpha}\big| D_{r-s}(y,h)\big|^2|l|^{2H-2}|h|^{2H-2} \nonumber\\
          &\qquad\qquad\qquad\times \lc\int_{\RR}\EE\big|\Delta_{l} v(s,z) \big|^p\lambda(z-y) dz\rc^{\frac 2p} dydhdlds.
     \end{align}
By     \eqref{FirDiffIneq} with $\beta=\frac12-H$ we see that
   \[
   \int_{\RR^2}\big| D_{r-s}(y,h)\big|^2 |h|^{2H-2}dhdy \lesssim (r-s)^{H-1}\,,
   \]
   which is finite.  Since $ x ^{2/p}, x>0$ is a concave function  for $p\ge 2$  we can apply Jensen's inequality with respect to the probability measure $(r-s)^{1-H}\blk G_{r-s}(y)-G_{r-s}(y+h)\brk^2 |h|^{2H-2}dydh$. Thus,   we have  for $p\geq 2$:
     \begin{align}\label{CharEst2I1}
       I_1  \lesssim& \int_{0}^{r}\int_{\RR}(r-s)^{-2\alpha+H-1}\bigg(\int_{\RR^3}(r-s)^{1-H}\big| D_{r-s}(y,h)\big|^2 \nonumber\\
          &\qquad|h|^{2H-2}\int_{\RR}\EE\big|\Delta_{l} v(s,z) \big|^p\lambda(z-y) dzdydh\bigg)^{\frac 2p}\times |l|^{2H-2}dlds\nonumber\\
         \lesssim& \int_{0}^{r}\int_{\RR}(r-s)^{-2\alpha+H-1}\|\Delta_{l} v(s,\cdot)\|^2_{L^p_{\lambda}(\Omega\times\RR)}|l|^{2H-2}dldr
     \end{align}
     by the first inequality in  Lemma \ref{LemDGLcontra}.

     In order to bound $\cI_2(t,x,h)$, we make a change of variable $y\to z-y$ and then split it to two terms.  More precisely, we have
     \begin{align}\label{CharEst2I2.0}
      &\cI_2(r,z,h)\lesssim \cI_{21}(r,z,h)+\cI_{22}(r,z,h) \\
      :=& \EE\lc \int_{0}^{r}\int_{\RR^2} (r-s)^{-2\alpha}|\Box_{r-s}(y,l,h)|^2 |v(s,z)|^2 |l|^{2H-2}dldyds\rc^{\frac p2} \nonumber\\
       +& \EE\lc \int_{0}^{r}\int_{\RR^2} (r-s)^{-2\alpha}|\Box_{r-s}(y,l,h)|^2 |\Delta_{y} v(s,z)|^2 |l|^{2H-2}dldyds\rc^{\frac p2}\,.\nonumber
     \end{align}
Using   Minkowski's inequality, Lemma \ref{NGreen},
 and Lemma \ref{LemBGcontra}, one can check that
     \begin{equation}\label{CharEst2I21}
       \begin{split}
          &I_{21}:=\int_{\RR} \Blk\int_{\RR}\cI_{21}(r,z,h)\lambda(z)dz\Brk^{\frac 2p}|h|^{2H-2}dh\\
         \lesssim& \int_{0}^{r}\int_{\RR^3} (r-s)^{-2\alpha}|\Box_{r-s}(y,l,h)|^2\|v(s)\|^2_{L^p_{\lambda}(\Omega\times\RR)}|l|^{2H-2}|h|^{2H-2}dldhdyds\\
         \lesssim& \int_{0}^{r} (r-s)^{-2\alpha+2H-\frac 32} \|v(s)\|^2_{L^p_{\lambda}(\Omega\times\RR)} ds,
       \end{split}
     \end{equation}
     and
     \begin{equation}\label{CharEst2I22}
       \begin{split}
           &I_{22}:=\int_{\RR} \Blk\int_{\RR}\cI_{22}(r,z,h)\lambda(z)dz\Brk^{\frac 2p}|h|^{2H-2}dh \\
         \lesssim& \int_{0}^{r}\int_{\RR} (r-s)^{-2\alpha}\int_{\RR}\lc\int_{\RR^2}|\Box_{r-s}(y,l,h)|^2 |l|^{2H-2}|h|^{2H-2}dldh\rc\cdot\|\Delta_{y} v(s,\cdot)\|^2_{L^p_{\lambda}(\Omega\times\RR)}dyds\\
         \lesssim& \int_{0}^{r}\int_{\RR} (r-s)^{-2\alpha+H-1} \|\Delta_{y} v(s,\cdot)\|^2_{L^p_{\lambda}(\Omega\times\RR)}|y|^{2H-2}dyds.
       \end{split}
     \end{equation}
Recalling the definition of $\|\cdot\|^p_{\cZ_{\lambda,T}^p}$, and combining  (\ref{CharEst2I1}), (\ref{CharEst2I21}) and (\ref{CharEst2I22}), we obtain
     \begin{equation}
     \begin{split}
        &\EE \int_{\RR}\Blk\int_{\RR}|\Delta_{h}J_{\alpha}(r,z)|^2|h|^{2H-2}dh\Brk^{\frac p2}\lambda(z)dz \\
        \leq&C_2\,\|v\|_{\cZ_{\lambda,T}^p}^p\bigg(\int_{0}^{r} (r-s)^{-2\alpha+2H-\frac 32} +(r-s)^{-2\alpha+H-1}dr \bigg)^{\frac p2}.
        \label{e.4.25a} 
     \end{split}
     \end{equation}
     Once we have $-2\alpha+2H-\frac 32>-1$ and $-2\alpha+H-1>-1$, i.e. $\alpha<H-\frac 14$, we see  that  (\ref{CharEst2}) follows
     from  \eqref{e.4.25a}. This condition on $\alpha$ 
     is combined with \eqref{e.4.alpha2} to become $\frac{3}{2p}<\alpha<H-\frac 14$.
     Therefore,  we have proved
      that if  $p>\frac{6}{4H-1}$,
     then (\ref{CharEst2})  holds,   finishing  the proof of \textbf{(ii)}.

\noindent\textbf{Step 5.} \ We are going to prove part      \textbf{(iii).}   We   continue to use \eqref{e.4.12}.
Without loss of generality,   we can assume $h>0$ and $t\in[0,T]$ such that $t+h\leq T$.   We have
     \begin{equation*}
       \begin{split}
          &\Phi(t+h,x)-\Phi(t,x) \\
          =&\frac{\sin(\pi\alpha)}{\pi} \bigg[\int_{0}^{t+h}\int_{\RR}  (t+h-r)^{\alpha-1}G_{t+h-r}(x-z) J_{\alpha}(r,z)drdz\\
          &\qquad\qquad-\int_{0}^t \int_{\RR} (t-r)^{\alpha-1}G_{t-r}(x-z) \times J_{\alpha}(r,z)drdz\bigg]\\
          \lesssim&\sum_{i=1}^{3} \cJ_{i}(t,h,x),
       \end{split}
     \end{equation*}
     where
     \[
      \cJ_1(t,h,x):=\int_{0}^{t}\int_{\RR}\blk(t+h-r)^{\alpha-1}-(t-r)^{\alpha-1}\brk G_{t-r}(x-z) J_{\alpha}(r,z)drdz,
     \]
     \[
      \cJ_2(t,h,x):=\int_{0}^{t}\int_{\RR}(t+h-r)^{\alpha-1} \blk G_{t+h-r}(x-z)-G_{t-r}(x-z)\brk J_{\alpha}(r,z)drdz,
     \]
     and
     \[
      \cJ_3(t,h,x):=\int_{t}^{t+h}\int_{\RR}(t+h-r)^{\alpha-1}  G_{t+h-r}(x-z) J_{\alpha}(r,z)drdz.
     \]
As in  the  proof of \textbf{(i)} and \textbf{(ii)}, we insert additional factors of $\lambda^{-\frac 1p}(z)\cdot\lambda^{\frac 1p}(z)$ 
and apply H\"{o}lder's  inequality in the expression for $\cJ_1$.  Then, $\cJ_1$ is estimated as follows.
     \begin{equation}
       \begin{split}
          \cJ_1(t,h,x)\leq&\lambda^{-\frac 1p}(x) \int_{0}^{t} \big|(t+h-r)^{\alpha-1}-(t-r)^{\alpha-1}\big|(t-r)^{\frac{1-q}{2q}}\|J_{\alpha}(r,\cdot)\|_{L_{\lambda}^p(\RR)}dr\\
          \leq& \lambda^{-\frac 1p}(x) \lc\int_{0}^{t} \big|(t+h-r)^{\alpha-1}-(t-r)^{\alpha-1}\big|^q(t-r)^{\frac{1-q}{2}}dr\rc^{\frac 1q} \\
          &\quad\quad\times \lc\int_{0}^{T}\|J_{\alpha}(r,\cdot)\|^p_{L_{\lambda}^p(\RR)}dr\rc^{\frac 1p}.
       \end{split}\label{e.4.26aa} 
     \end{equation}
     Fix $\gamma\in(0,1)$. It is easy to see
\begin{equation}
\big|(t+h-r)^{\alpha-1}-(t-r)^{\alpha-1}\big|\lesssim \big|t-r\big|^{\alpha-1-\gamma}h^{\gamma}.\label{e.simple_difference}
     \end{equation}
Thus, we have
     \begin{equation*}
     \begin{split}
         \sup_{t,x} \lambda^{1/p}(x)|\cJ_1(t,h,x)|\lesssim& h^{\gamma}
          \sup_{t\in[0,T]}\lc\int_{0}^{t} (t-r)^{q(\alpha-1-\gamma)+\frac{1-q}{2}}dr\rc^{\frac 1q} \\
          &\qquad\quad \times
          \lc\int_{0}^{T}\|J_{\alpha}(r,\cdot)\|^p_{L_{\lambda}^p(\RR)}dr\rc^{\frac 1p}.
     \end{split}
     \end{equation*}
In other word,  if $\gamma+\frac{3}{2p}<\alpha<\frac H2$ or equivalently,  if $\gamma<\frac H2-\frac{3}{2p}$,   then  we have
     \begin{equation}\label{CharEst3J1}
       \EE \left|\sup_{t,x} \lambda^{\theta}(x)|\cJ_1(t,h,x)|\right|^p\lesssim |h|^{p\gamma}\|v\|^p_{\cZ_{\lambda,T}^p}.
     \end{equation}
  Let us proceed to bound $\cJ_2(t,h,x)$. One   finds easily
\begin{eqnarray}
 \cJ_2(t,h,x)
 & \leq& \bigg( \int_{0}^{t}\int_{\RR} (t+h-r)^{q(\alpha-1)}\big|G_{t+h-r}(x-z)\label{e.4.J2}\\
          &&  -G_{t-r}(x-z)\big|^q \lambda^{-\frac qp}(z)dz dr\bigg)^{\frac 1q}   \bigg(\int_{0}^{T}\|J_{\alpha}(r,\cdot)\|^p_{L_{\lambda}^p(\RR)}dr\bigg)^{\frac 1p}. \nonumber
       \end{eqnarray}
To bound the above first factor  we use the following  inequality
     \[
      \left|\exp\lc-\frac{x^2}{t+h}\rc-\exp\lc-\frac{x^2}{t}\rc\right|\leq C_{\gamma} h^{\gamma}t^{-\gamma}\exp\lc-\frac{\gamma x^2}{2(t+h)}\rc\quad \forall \ \gamma\in (0, 1)\,.
     \]
Combining  the above inequality with  \eqref{e.simple_difference} (with $\al=1/2$), we have
\begin{equation}
\begin{split}
&|G_{t+h-r}(x-z)-G_{t-r}(x-z)|
\\
\le &C_{\gamma} h^{\gamma}(t-r)^{-\gamma}
\left[G_{\frac{2}{\gamma}(t+h-r)}(x-z)+ { G_{\frac{2}{\gamma}(t-r)}(x-z)}\right]\,.
\end{split} \label{e.4.G_difference}
   \end{equation}
Thus,   the first factor in \eqref{e.4.J2} is bounded by
     \begin{equation*}
       \begin{split}
          &\int_{0}^{t}\int_{\RR} (t+h-r)^{q(\alpha-1)}\big|G_{t+h-r}(x-z)-G_{t-r}(x-z)\big|^q \lambda^{-\frac qp}(z)dz dr \\
         \lesssim& h^{q\gamma}\int_{0}^{t}\int_{\RR}(t-r)^{q(\alpha-1-\gamma)+\frac{1-q}{2}}G_{\frac{ 2(t+h-r) }{\gamma q}}(x-z)\lambda^{-\frac qp}(z)dzdr \\
         +& h^{q\gamma}\int_{0}^{t}\int_{\RR}(t-r)^{q(\alpha-1-\gamma)+\frac{1-q}{2}}G_{\frac{2(t-r)}{\gamma q}}(x-z)\lambda^{-\frac qp}(z)dzdr \\
         \lesssim& h^{q\gamma} \lambda^{-\frac qp}(x) \int_{0}^{t}(t-r)^{q(\alpha-1-\gamma)+\frac{1-q}{2}}dr\,,
       \end{split}
     \end{equation*}
where the last inequality follows from Lemma \ref{TechLemma1}.
Hence, if  $\gamma+\frac{3}{2p}<\alpha<\frac H2$, namely, if  $\gamma<\frac H2-\frac{3}{2p}$, then we have the following estimation:
     \begin{equation}\label{CharEst3J2}
       \EE \left|\sup_{t,x} \lambda^{\theta}(x)|\cJ_2(t,h,x)|\right|^p\lesssim |h|^{p\gamma}\|v\|^p_{\cZ_{\lambda,T}^p}.
     \end{equation}

Now we are going to bound $\cJ_3(t,x,h)$. {   Exactly in the same way as 
for  \eqref{e.4.26aa},  we have }
     \begin{equation*}
       \begin{split}
  \cJ_3(t,x,h)\le& \lambda^{-\frac 1p}(x) \lc\int_{t}^{t+h}(t+h-r)^{q(\alpha-1)}
  (t+h-r)^{ \frac{1-q}{2}}dr\rc^{\frac 1q}\\
  &\quad \lc\int_{0}^{T}\|J_{\alpha}(r,\cdot)\|^p_{L_{\lambda}^p(\RR)}dr\rc^{\frac 1p}\\
  =& C_p \lambda^{-\frac 1p}(x)  h^{\alpha -\frac{3}{2p}}\lc\int_{0}^{T}\|J_{\alpha}(r,\cdot)\|^p_{L_{\lambda}^p(\RR)}dr\rc^{\frac 1p}\,.  
       \end{split}
     \end{equation*}
If  $\frac{3}{2p}<\alpha<\frac H2$,  which is possible if  $\gamma<\alpha-\frac{3}{2p}<\frac H2-\frac{3}{2p}$,  then
     \begin{equation}\label{CharEst3J3}
       \EE \left|\sup_{t,x} \lambda^{\theta}(x)|\cJ_3(t,h,x)|\right|^p\leq C_3 |h|^{p\alpha-\frac{3}{2}}\|v\|_{\cZ_{\lambda,T}^p}=C_3\,|h|^{p\gamma}\|v\|^p_{\cZ_{\lambda,T}^p}\,.
     \end{equation}
Combining  \eqref{CharEst3J1}, \eqref{CharEst3J2} and \eqref{CharEst3J3}  we prove \eqref{EstZ3}.

 \noindent\textbf {Step 6.} \ We prove   part  \textbf{(iv)} of the proposition.  As before, we shall again use the representation formula
 \eqref{e.4.12}  and then we apply the H\"{o}lder inequality to find
     \begin{equation*}
       \begin{split}
          &\Phi(t,x)-\Phi(t,y) \\
          =&\frac{\sin(\pi\alpha)}{\pi}\int_{0}^{t}\int_{\RR} (t-r)^{\alpha-1}\blk G_{t-r}(x-z)-G_{t-r}(y-z)\brk J_{\alpha}(r,z) dzdr \\
          \lesssim&\lc\int_{0}^{t}\int_{\RR} (t-r)^{q(\alpha-1)}|G_{t-r}(x-z)-G_{t-r}(y-z)|^{q} \lambda^{-\frac qp}(z)dzdr\rc^{\frac 1q} \\
          &\times\lc\int_{0}^{T} \int_{\RR}|J_{\alpha}(r,z)|^{p} \lambda(z)dzdr\rc^{\frac 1p}.
       \end{split}
     \end{equation*}
Denote the above first factor by 
     \[
      \cK(t,x,y):=\int_{0}^{t}\int_{\RR} (t-r)^{q(\alpha-1)}|G_{t-r}(x-z)-G_{t-r}(y-z)|^{q} \lambda^{-\frac qp}(z)dzdr.
     \]
Fix  $\gamma\in(0,1)$.
{  Using H\"{o}lder's  inequality     we have
     \begin{equation}
       \begin{split}
          &\cK(t,x,y) \\
         \lesssim& \int_{0}^{t} (t-r)^{q(\alpha-1)} \lc\int_{\RR}|G_{t-r}(x-z)-G_{t-r}(y-z)|^{pq(1-\gamma)}\lambda^{-q}(z)dz\rc^{\frac 1p}\\
         &\qquad\qquad\qquad\times \lc\int_{\RR}|G_{t-r}(x-z)-G_{t-r}(y-z)|^{q^2 \gamma}dz\rc^{\frac 1q} dr  \,.  
       \end{split}\label{e.4.33aa} 
     \end{equation} 
  To bound the integral inside the above second bracket, we make the substitutions  $\tilde{x}=\frac{x}{\sqrt{t-r}}$, $\tilde{y}=\frac{y}{\sqrt{t-r}}$ and $\tilde{z}=\frac{z}{\sqrt{t-r}}$ to obtain for any    {$\rho>0$},  
\begin{align*}
  &\int_{\RR}|G_{t-r}(x-z)-G_{t-r}(y-z)|^{\rho }dz \\
  \simeq& (t-r)^{\frac{1-\rho}{2}}\int_{\RR} |\exp(-|\tilde{x}-\tilde{z}|^2)-\exp(-|\tilde{y}-\tilde{z}|^2)|^{\rho}d\tilde{z} \\
  \ls&(t-r)^{\frac{1-\rho}{2}}|\tilde{x}-\tilde{y}|^{\rho}= (t-r)^{\frac{1-2\rho}{2}}|x-y|^{\rho} \,. 
\end{align*}
Substituting this  bound into \eqref{e.4.33aa}
with $\rho=q^2 r$   we have
\begin{equation*}
       \begin{split}
          &\cK(t,x,y) \\
         \lesssim& |x-y|^{q\gamma}\cdot\int_{0}^{t} (t-r)^{q(\alpha-1)+\frac{1-pq(1-\gamma)}{2p}+\frac{1-2q^2 \gamma}{2q}} \\
         &\qquad\qquad\qquad\times\lc\int_{\RR}\blk G_{\frac{t-r}{pq(1-\gamma)}}(x-z)+G_{\frac{t-r}{pq(1-\gamma)}}(y-z)\brk \lambda^{-q}(z)dz\rc^{\frac 1p} dr \\
         \lesssim& |x-y|^{q\gamma}\cdot \blk\lambda^{-\frac qp}(x)+\lambda^{-\frac qp}(y)\brk \cdot\int_{0}^{t} (t-r)^{q(\alpha-\frac 32+\frac{1}{2q})-\frac{q\gamma}{2}} dr\,,
       \end{split}
     \end{equation*}
 where the last inequality follows from Lemma \ref{TechLemma1}. 
}

If $q(\alpha-\frac 32+\frac{1}{2q})-\frac{q\gamma}{2}>-1$ and $\alpha<\frac{H}{2}$, namely,  if 
$\frac{3}{2p}+\frac{\gamma}{2}<\alpha<\frac{H}{2}$, then with $\theta=\frac 1p$ we have
     \begin{align}\label{CharEst4K}
        &\EE \bigg|\sup\limits_{\substack{t\in[0,T] \\ x,y\in \RR}}\lt(\lambda^{-\theta}(x)+\lambda^{-\theta}(y)\rt)^{-1}|\cK(t,x,y)|^{\frac 1q}\times\Blc\int_{0}^{T} \int_{\RR}|J_{\alpha}(r,z)|^{p}\lambda(z)dzdr\Brc^{\frac 1p}\bigg|^p \nonumber\\
       \lesssim& |x-y|^{p\gamma}\cdot\int_{0}^{T} \int_{\RR}\EE|J_{\alpha}(r,z)|^{p}\lambda(z)dzdr\leq C_4 |x-y|^{p\gamma}\|v\|^p_{\cZ_{\lambda,T}^p}\,.
     \end{align}
This proves   \eqref{EstZ4}.  The proof of the proposition is then completed.
 \end{proof}

\subsection{Weak existence of the solution}
In this subsection  we   show the weak existence of a solution with paths in $\cC([0, T]\times\RR)$, the space of all continuous real valued functions on $[0, T]\times\RR$, equipped with a metric
\begin{equation}
  d_{\cC}(u,v):=\sum_{n=1}^{\infty}\frac{1}{2^n} \max_{0\le t\le T,|x|\leq n}(|u(t,x)-v(t,x)|\wedge 1).\label{e.4.metric}
 \end{equation}
We state   a tightness criterion  of probability measures on $(\cC([0, T]\times\RR),\sB(\cC([0, T]\times\RR)))$   that we are going to use  (see  Section 2.4 in \cite{KS1998} for  the  case where $[0, T]\times\RR$ is replaced by $[0,\infty)$. It is also true for our case  as indicated there).
 \begin{theorem}\label{TightMoment}
   A sequence  $\{\bP_n\}_{n=1}^{\infty}$ of probability measures on $(\cC([0, T]\times\RR),\sB(\cC([0, T]\times\RR)))$ is tight if and only if
   \begin{enumerate}[leftmargin=*]
     \item[(1)] $\lim_{\lambda\uparrow\infty} \sup_{n\geq 1} \bP_n
     \left( \left\{\omega\in \cC([0, T]\times\RR):\ |\omega(0,0)|>\lambda\right\}\right) =0$,
     \item[(2)]  for any $T>0$, $R>0$ and $\varepsilon>0$
    \[
     \lim_{\delta\downarrow 0} \sup_{n\geq 1} \bP_n
     \left( \left\{\omega\in \cC([0, T]\times\RR):\ m^{T,R}(\omega,\delta)>\varepsilon\right\}\right) =0
    \]
    where
    \[
    m^{T,R}(\omega,\delta):=\max\limits_{\substack{|t-s|+|x-y|\leq \delta\\0\leq t,s\leq T;0\leq |x|,|y|\leq R}} |\omega(t,x)-\omega(s,y)|
    \]
     is the modulus of continuity on $[0,T]\times[-R,R]$.
   \end{enumerate}
 \end{theorem}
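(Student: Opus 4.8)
The plan is to reduce the statement to the standard Prohorov--Arzel\`a--Ascoli machinery. First I would observe that $(\cC([0,T]\times\RR),d_{\cC})$ is a Polish space: it is separable and complete, and the metric \eqref{e.4.metric} induces precisely the topology of uniform convergence on each compact strip $[0,T]\times[-R,R]$, $R\in\NN$. By Prohorov's theorem, a family $\{\bP_n\}$ of probability measures on this Polish space is tight if and only if it is relatively compact in the topology of weak convergence, so the task becomes: construct, for every $\ep>0$, a compact set $K_\ep\subset\cC([0,T]\times\RR)$ with $\inf_n\bP_n(K_\ep)\ge 1-\ep$, and conversely read off conditions (1)--(2) from such sets.

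Next I would record the compactness criterion in this space. Because the topology is uniform convergence on the compacta $[0,T]\times[-R,R]$, a diagonal argument together with the classical Arzel\`a--Ascoli theorem on each strip shows that a set $A\subset\cC([0,T]\times\RR)$ has compact closure if and only if for every $R\in\NN$ one has (a) $\sup_{\omega\in A}\sup_{0\le t\le T,|x|\le R}|\omega(t,x)|<\infty$ and (b) $\sup_{\omega\in A}m^{T,R}(\omega,\delta)\to 0$ as $\delta\downarrow 0$. A key simplification is that (a) is implied by boundedness at the single point $(0,0)$ together with (b): chaining along a $\delta$-net joining $(0,0)$ to an arbitrary $(t,x)\in[0,T]\times[-R,R]$ in $N_{R,\delta}:=\lceil T/\delta\rceil+\lceil R/\delta\rceil$ steps gives
\[
|\omega(t,x)|\le |\omega(0,0)|+N_{R,\delta}\,m^{T,R}(\omega,\delta)\,,
\]
so a bound on $|\omega(0,0)|$ and equicontinuity upgrade automatically to a uniform bound on each strip.

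With this in hand the sufficiency direction is routine. Given $\ep>0$, condition (1) lets me pick $\lambda$ with $\sup_n\bP_n(|\omega(0,0)|>\lambda)<\ep/2$, and condition (2) lets me pick, for each pair $(R,k)\in\NN^2$, a radius $\delta_{R,k}>0$ with $\sup_n\bP_n(m^{T,R}(\omega,\delta_{R,k})>1/k)<\ep\,2^{-(R+k+1)}$. Setting
\[
K_\ep:=\{\omega:|\omega(0,0)|\le\lambda\}\cap\bigcap_{R,k\ge 1}\{\omega:m^{T,R}(\omega,\delta_{R,k})\le 1/k\}\,,
\]
the set $K_\ep$ is closed, equicontinuous on every strip, and (by the chaining bound) uniformly bounded on every strip, hence compact by the criterion above; a union bound gives $\sup_n\bP_n(K_\ep^c)<\ep$, proving tightness. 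For the converse, tightness produces compact sets $K_\ep$ with $\inf_n\bP_n(K_\ep)\ge 1-\ep$, and applying (a)--(b) to the compact $K_\ep$ yields a uniform bound on $|\omega(0,0)|$ over $K_\ep$ and the vanishing of $\sup_{\omega\in K_\ep}m^{T,R}(\omega,\delta)$ as $\delta\downarrow 0$, from which (1) and (2) follow.

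I expect the only genuinely nonstandard point---and the main obstacle---to be the compactness characterization of the second paragraph, since $[0,T]\times\RR$ is non-compact: Arzel\`a--Ascoli has to be applied strip by strip and patched by a diagonal extraction, and one must verify carefully that control at the single base point $(0,0)$ plus strip-wise equicontinuity really does force strip-wise uniform boundedness via the chaining estimate. Once this is established, the remainder is the verbatim Prohorov-type argument of \cite{KS1998}, now carried out on $\cC([0,T]\times\RR)$ instead of $\cC([0,\infty))$.
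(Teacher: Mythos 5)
Your proposal is correct and takes essentially the same route as the paper: the paper offers no proof of its own but cites Section 2.4 of \cite{KS1998}, whose argument for $\cC([0,\infty))$ is precisely this Arzel\`a--Ascoli-plus-chaining characterization of relatively compact sets (boundedness at a single base point upgraded to strip-wise uniform boundedness) followed by the construction of compact sets $K_\ep$ via a union bound, which you have adapted correctly to $\cC([0,T]\times\RR)$ with a diagonal extraction over the strips $[0,T]\times[-R,R]$. I see no gaps.
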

We approximate  the noise $ W$ with respect to the space variable by the following smoothing of the noise.
 That   is, for $\ep>0$ we define
 \begin{equation}\label{Regu}
     \frac{\partial }{\partial x} W_{\ep}(t,x) = \int_{\RR}  G_{\ep} (x-y)W(t,dy)\,.
 \end{equation}
The noise $W_\ep$ induces an approximation to mild solution
 \begin{equation}\label{MildSolRegu}
   u_\ep(t,x)=G_t\ast u_0(x)+\int_{0}^{t}\int_{\RR} G_{t-s}(x-y)\sigma(s,y,u_\ep(s,y))W_{\ep}(ds,dy),
 \end{equation}
 where the stochastic
 integral is understood in the It\^{o} sense.
 As in \cite{HHLNT2017} due to the regularity in space, the existence and uniqueness of the solution $u_\ep(t,x)$ to above equation is well-known.

 The lemma below asserts that the approximate solution
 $u_\ep(t,x)$  is uniformly bounded in the space $\cZ_{\lambda,T}^p$.  More  precisely, we have

 \begin{lemma}\label{UniBExist}
 Let $H\in(\frac 14,\frac 12)$ and let $\lambda(x)$ be defined by  \eqref{lamd}.
   Assume $\sigma(t,x,u)$ satisfies hypothesis \textbf{(H1)}.
Assume  also that  the initial value $u_0(x)\in\cZ_{\lambda,0}^p$.
   Then the approximate solutions $u_\ep$ satisfy
   \begin{equation}\label{ReguBdd}
     \sup_{\ep>0}\|u_\ep\|_{\cZ_{\lambda,T}^p}:=\sup_{\ep>0}\sup_{t\in[0,T]}\|u_\ep(t,\cdot)\|_{L^p_{\lambda}(\Omega\times\RR)} +\sup_{\ep>0}\sup_{t\in[0,T]}\cN^*_{\frac 12-H,p}u_\ep(t)<\infty.
   \end{equation}
 \end{lemma}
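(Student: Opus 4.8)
The plan is to derive a single self-improving integral inequality for the $\cZ_{\lambda,T}^p$-quantity of $u_\ep$ at a fixed time and then to close it by a weakly singular Gronwall argument, keeping all constants independent of $\ep$. First I would write the mild formula \eqref{MildSolRegu} as $u_\ep(t,x)=G_t\ast u_0(x)+\Phi_\ep(t,x)$, where $\Phi_\ep(t,x)=\int_0^t\int_\RR G_{t-s}(x-y)v_\ep(s,y)W_\ep(ds,dy)$ and $v_\ep(s,y):=\sigma(s,y,u_\ep(s,y))$. Because $W_\ep$ is $W$ convolved with $G_\ep$ and $G$ is a semigroup, the smoothing only regularizes the noise and replaces $G_{t-s}$ by $G_{t-s+\ep}$ in the relevant covariance computations, so every heat-kernel estimate below holds uniformly for $\ep\in[0,1]$. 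It then suffices to bound
\[
g_\ep(t):=\|u_\ep(t,\cdot)\|_{L^p_\lambda(\Omega\times\RR)}^2+\big[\cN^*_{\frac 12-H,p}u_\ep(t)\big]^2
\]
uniformly in $t\in[0,T]$ and in $\ep$, since the left side of \eqref{ReguBdd} is dominated by $2\sup_{t\le T}g_\ep(t)^{1/2}$.

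For the deterministic part, the weighted boundedness of the heat semigroup (Lemma \ref{TechLemma1}) gives $\|G_t\ast u_0\|_{L^p_\lambda(\Omega\times\RR)}\ls\|u_0\|_{L^p_\lambda(\Omega\times\RR)}$, while a parallel computation built on the kernel-increment bound of Lemma \ref{LemDGLcontra} controls $\cN^*_{\frac 12-H,p}[G_t\ast u_0]$ by $\|u_0\|_{\cZ_{\lambda,0}^p}$. For the stochastic part I would apply the Burkholder--Davis--Gundy inequality of Proposition \ref{HBDG} pointwise in $(t,x)$ and then integrate against $\lambda(x)\,dx$. The crucial simplification is that the $\cZ$-norm takes the spatial integral in $L^p_\lambda$ rather than a supremum in $x$, and the supremum in $t$ is taken outside the $L^p(\Omega)$ norm; hence no factorization is required, and the argument reduces to Steps~2 and~4 of the proof of Proposition \ref{CharProp} with $\alpha=0$. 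Splitting the increment $G_{t-s}(x-y-h)v_\ep(s,y+h)-G_{t-s}(x-y)v_\ep(s,y)$ into a kernel-increment term and an integrand-increment term reproduces the families $\cD_1,\cD_2$ and $\cI_1,\cI_2$ from that proof; the kernel-increment pieces are absorbed by Lemmas \ref{NGreen} and \ref{LemDGLcontra}, leaving weakly singular weights $(t-s)^{H-1}$, $(t-s)^{2H-\frac 32}$ and $(t-s)^{-\frac 12}$.

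Combining these bounds and invoking the linear-growth estimate \eqref{LinGrowSigam} together with the Lipschitz estimate \eqref{LipSigam} to replace the $\cZ$-data of $v_\ep(s,\cdot)$ by that of $u_\ep(s,\cdot)$, one arrives at a closed inequality of the form
\[
g_\ep(t)\ls 1+\|u_0\|_{\cZ_{\lambda,0}^p}^2+\int_0^t\Big[(t-s)^{H-1}+(t-s)^{2H-\frac 32}+(t-s)^{-\frac 12}\Big]\big(1+g_\ep(s)\big)\,ds\,.
\]
The most singular exponent here is $\kappa=\frac 32-2H$, which is $<1$ precisely because $H>\frac 14$; this is exactly where the roughness threshold enters. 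A fractional (singular-kernel) Gronwall lemma then yields $\sup_{t\le T}g_\ep(t)\le C_T<\infty$ with $C_T$ depending only on $T,p,H$ and $\|u_0\|_{\cZ_{\lambda,0}^p}$, which is \eqref{ReguBdd}.

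The step I expect to be the main obstacle is the control of the integrand-increment term, that is, of $\cN^*_{\frac 12-H,p}v_\ep(s)$ in terms of $\cN^*_{\frac 12-H,p}u_\ep(s)$. The Lipschitz hypothesis \eqref{LipSigam} disposes of the genuinely nonlinear contribution $\sigma(s,y+h,u_\ep(s,y+h))-\sigma(s,y+h,u_\ep(s,y))$ cleanly, but the frozen-$u$ spatial increment $\sigma(s,y+h,u)-\sigma(s,y,u)$ must still be shown to contribute a finite, $\ep$-independent amount to the $\cN^*$-norm; this is where the joint regularity in \textbf{(H1)} and the decay of the weight $\lambda$ must be used, and where one cannot fall back on the square-increment bound for $\sigma$ that was available only in the $\sigma(0)=0$ framework of \cite{HHLNT2017}. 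A secondary technical point is verifying that the It\^o integrals against the smoothed noise $W_\ep$ satisfy the bound of Proposition \ref{HBDG} uniformly in $\ep$, which follows from the semigroup identity $G_{t-s}\ast G_\ep=G_{t-s+\ep}$ and monotone passage to the limit in the heat-kernel estimates of Section \ref{lemmas}.
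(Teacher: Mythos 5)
Your estimates themselves---applying the Burkholder--Davis--Gundy inequality of Proposition \ref{HBDG}, splitting the stochastic increment into a kernel-increment piece and an integrand-increment piece, absorbing the kernel pieces with Lemmas \ref{NGreen}, \ref{LemDGLcontra} and \ref{LemBGcontra}, and ending with the singular weights $(t-s)^{H-1}$, $(t-s)^{-1/2}$, $(t-s)^{2H-\frac 32}$ closed by a fractional Gronwall lemma---are exactly the paper's Steps 1--2, and your observation that no factorization is needed here is also correct. The genuine gap is in how you close the argument: you apply the Gronwall lemma directly to $g_\ep(t)=\|u_\ep(t,\cdot)\|^2_{L^p_{\lambda}(\Omega\times\RR)}+\big[\cN^*_{\frac 12-H,p}u_\ep(t)\big]^2$. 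A singular Gronwall lemma produces a bound only when the quantity it is applied to is already known to be finite (indeed locally bounded); if $g_\ep(t)=\infty$, the integral inequality holds trivially and yields nothing. But finiteness of $g_\ep$, i.e.\ $u_\ep\in\cZ^p_{\lambda,T}$, is precisely the content of the lemma, and the existence theory for the smoothed equation does not hand it to you for free. This is exactly why the paper does not run the estimate on $u_\ep$ itself: it runs the identical estimate on the Picard iterates $u^n_\ep$ of \eqref{e.4.39}, for which finiteness of $\Psi^n_\ep(t)$ is available inductively (starting from $u^0_\ep=G_t\ast u_0$ with $u_0\in\cZ^p_{\lambda,0}$), obtains from the fractional Gronwall lemma a bound uniform in $n$ and $\ep$, and then transfers the bound to $u_\ep$ through the a.s.\ convergence $u^n_\ep\to u_\ep$ (cited from \cite{HHLNT2019}) and Fatou's lemma. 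Your proof becomes complete if you insert this iteration-plus-Fatou step, or alternatively first establish a qualitative, possibly $\ep$-dependent, a priori bound $g_\ep<\infty$; as written the argument is circular.

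Two secondary points. First, your uniformity-in-$\ep$ mechanism is misstated: convolving the noise with $G_\ep$ does not replace $G_{t-s}$ by $G_{t-s+\ep}$, because in the mild equation the convolution acts on the whole integrand $G_{t-s}(x-\cdot)\sigma(\cdot,u_\ep(s,\cdot))$ and not on the kernel alone. The correct (and simpler) mechanism, used in \eqref{uepLp}, is spectral: the covariance of $W_\ep$ carries the extra factor $e^{-\ep|\xi|^2}\le 1$, so every bound proved for $W$ dominates the corresponding bound for $W_\ep$, with constants independent of $\ep$. Second, the ``frozen-$u$'' spatial increment $\sigma(s,y+h,u)-\sigma(s,y,u)$ that you single out as the main obstacle is indeed not controlled by \textbf{(H1)}; but the paper does not resolve it either---it reduces at the outset to $\sigma(t,x,u)=\sigma(u)$, declaring this to be without loss of generality, so your concern points at a real limitation of the written proof rather than at a step you failed to reproduce.
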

\begin{proof}
For notational simplicity  we   assume $\sigma(t,x,u)=\sigma(u)$ without loss of generality because of hypothesis \textbf{(H1)}.  We shall use some   similar thoughts   to    that in \cite{HHLNT2017} but now with   special attention to the weight $\lambda(x)$.
To this end, we define the Picard iteration  as follows: 
   \[
    u_\ep^0(t,x)=G_t\ast u_0(x)\,,
   \]
   and recursively  for $n=0, 1, 2, \cdots$,
\begin{equation}
    u_\ep^{n+1}(t,x)=G_t\ast u_0(x)+\int_{0}^{t}\int_{\RR} G_{t-s}(x-y)\sigma(u_\ep^{n}(s,y))W_{\ep}(ds,dy)\,. \label{e.4.39} 
   \end{equation} 
From \cite[Lemma 4.12]{HHLNT2019}  it follows that  for  any fixed $\ep>0$  
 when $n$ goes to infinity,  the  sequence $u_{\ep}^n(t,x)$ converges to $u_{\ep}(t,x)$ a.s. 
{ In the following steps 1 and 2, we shall first  bound $\|u_\ep^n\|_{\cZ_{\lambda,T}^p}$ uniformly in $n$,    and $\ep$.
Then,  in step 3 we   
use Fatou's lemma to show \eqref{ReguBdd}. 
}

   \noindent{\textbf{Step 1. }}\
   In this step,     we   derive a Gronwall-type  inequality to bound  the $L^p_{\lambda}(\Omega\times\RR)$ norm  of $u^{n+1}_\ep(t,x)$ by the   $\cZ_{\lambda,T}^p$  norm  of $u^{n}_\ep(t,x)$.  
    Rewrite \eqref{e.4.39} as  
   \[
    u^{n+1}_\ep(t,x)=G_t\ast u_0(x)+\int_{0}^{t}\int_{\RR} \Blk\Blc G_{t-s}(x-\cdot)\sigma(u^{n}_\ep(s,\cdot))\Brc\ast G_\ep\Brk(y)W(ds,dy)\,.
   \]
   In the following, we will continue to  use the notations $D_t(x,h)$ and $\Box_{t-s}(x,y,h)$ defined in \eqref{FirDiff} and \eqref{SecDiff} previously. Applying  the Burkholder-Davis-Gundy  inequality (Proposition \ref{HBDG}) and the isometry equalities \eqref{hinner.1}-\eqref{hinner.3}  
 and  then noting $|\sigma(u)|\lesssim |u|+1$,  we have
   \begin{align}\label{uepLp}
        &\EE\blk|u^{n+1}_\ep(t,x)|^p\brk \nonumber\\
     \leq& C_p |G_t\ast u^{n}_0(x)|^p
       +C_p\EE\lc\int_{0}^{t}\int_\RR \Big|\cF\blk G_{t-s}(x-\cdot)\sigma(u_\ep(s,\cdot)) \brk (\xi)\Big|^2  e^{-\varepsilon |\xi|^2}| \xi|^{1-2H} d\xi ds\rc^{\frac p2}  \nonumber\\
       \leq& C_p|G_t\ast u_0(x)|^p
       + C_p\EE\bigg(\int_{0}^{t}\int_{\RR^2}  \Big|G_{t-s}(x-y-h)\sigma(u^{n}_\ep(s,y+h)) \nonumber\\
       &\qquad\qquad\qquad\qquad\qquad\qquad-G_{t-s}(x-y)\sigma(u^{n}_\ep(s,y))\Big|^2 |h|^{2H-2}dhdyds\bigg)^{\frac p2} \nonumber\\
       \leq & C_p\lc |G_t\ast u_0(x)|^p+\cD^{\ep,n}_1(t,x)+\cD^{\ep,n}_2(t,x)\rc\,,
   \end{align}
  where the constant  $C_p$  
is independent  of $\ep$ because $e^{-\varepsilon |\xi|^2}\leq 1$,  
and where we denote 
   \begin{align*}
     \cD^{\ep,n}_1(t,x) &:=\bigg(\int_{0}^{t}\int_{\RR^2} \big|D_{t-s}(y,h)\big|^2 \left(1+\|u^{n}_\ep(s,x+y )\|_{L^p(\Omega)}^2\right) |h|^{2H-2}dhdyds\bigg)^{\frac p2} \,, 
\end{align*}
   and $\Delta_h u_{\ep}^n(t,x):=u_{\ep}^n(t,x+h)-u_{\ep}^n(t,x)$,
   \begin{align*}
     \cD^{\ep,n}_2(t,x) &:=\bigg( \int_{0}^{t}\int_{\RR^2}|G_{t-s}(y)|^2\|\Delta_h u_{\ep}^n(t,x+y)\|_{L^p(\Omega)}^2
             |h|^{2H-2} dhdyds\bigg)^{\frac p2}\,.
   \end{align*}
   This means
   \begin{align}
      \|u^{n+1}_{\ep}(t,\cdot)\|_{L^p_{\lambda}(\Omega\times\RR)}^2 =& \lc \int_\RR \EE\blk|u^{n}_\ep(t,x)|^p\brk \lambda(x) dx \rc^{\frac 2p} \nonumber\\
     \leq& C_p\lc \|u_0(x)\|_{L^p_{\lambda}(\RR)}+ I^{\ep,n}_1+I^{\ep,n}_2\rc\,,
     \label{e.4.38}
   \end{align}
where $I^{\ep,n}_1$ and $I^{\ep,n}_2$ are defined and bounded as follows.
\begin{equation}\label{D1Bdd}
     \begin{split}
 	I^{\ep,n}_1:= \lc\int_\RR \cD^{\ep,n}_1(t,x) \lambda(x)dx \rc^{\frac 2p}\leq& C_{p,H}\int_{0}^{t} (t-s)^{H-1}\Blc1+\|u^n_{\ep}(s,\cdot)\|^2_{L^p_{\lambda}(\Omega\times\RR)}\Brc ds\,,
     \end{split}
\end{equation}
{  and  }
\begin{equation}\label{D1Bdd2}
    \begin{split}
    I^{\ep,n}_2:= &\lc\int_\RR \cD^{\ep,n}_2(t,x) \lambda(x)dx \rc^{\frac 2p}\leq C_{p,H} \int_{0}^{t}\frac{\lt[\cN^*_{\frac 12-H,p}u^n_\ep(s)\rt]^2}{\sqrt{t-s}} ds\,.
    \end{split}
\end{equation} 
The  above bounds on $I_1^{\ep, n} , I_2^{\ep, n}$ together with \eqref{e.4.38} yield  
\begin{align}
  \|u^{n+1}_{\ep}(t,\cdot)\|_{L^p_{\lambda}(\Omega\times\RR)}^2
 &\leq C_{p,H}\bigg( \|u_0\|^2_{L^p_{\lambda}(\omega\times\RR)}+ \int_{0}^{t} (t-s)^{H-1}\  \|u^n_{\ep}(s,\cdot)\|^2_{L^p_{\lambda}(\Omega\times\RR)}  ds \nonumber\\
 &\qquad\qquad +\int_{0}^{t}(t-s)^{-1/2} \lt[\cN^*_{\frac 12-H,p}u^n_\ep(s)\rt]^2  ds\bigg)\,.
     \label{e.4.42}
\end{align}

\noindent\textbf{Step 2.}\
Next, we obtain a bound for  $\cN^*_{\frac 12-H,p}u^{n+1}_\ep(t)$ analogous to \eqref{e.4.42}.  Similar to \eqref{uepLp} we have
   \begin{align*}
      &\EE\blk|u^{n+1}_\ep(t,x)-u^{n+1}_\ep(t,x+h)|^p\brk \\
     \leq& C_p \big|G_t\ast u_0(x)-G_t\ast u_0(x+h)\big|^p \\
       &+C_p\EE\bigg( \int_{0}^{t}\int_{\RR^2} \Big|D_{t-s}(x-y-z,h)\sigma(u^{n}_\ep(s,y+z)) \\
       &\qquad\qquad\quad -D_{t-s}(x-z,h)\sigma(u^{n}_\ep(s,z))\Big|^2|y|^{2H-2} dzdyds\bigg)^{\frac p2} \\
     \leq& C_p\lc \cI_0(t,x,h)+\cI^{\ep,n}_1(t,x,h)+\cI^{\ep,n}_2(t,x,h)\rc\,,
   \end{align*}
   where
   \[
    \cI_0(t,x,h):=\big|G_t\ast u_0(x)-G_t\ast u_0(x+h)\big|^p\,,
   \]
   \begin{align*}
     \cI^{\ep,n}_1(t,x,h)&:= \EE \bigg(\int_{0}^{t}\int_{\RR^2}\Big| D_{t-s}(x-y-z,h)\Big|^2 \big|\sigma(u_\ep(s,y+z))-\sigma(u_\ep(s,z)) \big|^2 |y|^{2H-2}dzdyds\bigg)^{\frac p2}\,,
   \end{align*}
   \begin{align*}
     \cI^{\ep,n}_2(t,x,h)&:= \EE \bigg(\int_{0}^{t}\int_{\RR^2} \Big|\Box_{t-s}(x-z,y,h)\Big|^2 \times\big|\sigma(u_\ep(s,z))\big|^2|y|^{2H-2} dzdyds\bigg)^{\frac p2}\,.
   \end{align*}
By   Minkowski's  inequality we have
   \begin{align}
     \lk\cN^*_{\frac 12-H,p}u^{n+1}_\ep(t)\rk^2
     &\leq C_{p} \sum_{j=0}^{2}\int_{\RR}\lc \int_\RR \cI^{\ep,n}_j(t,x,h) \lambda(x) dx\rc^{\frac 2p} |h|^{2H-2} dh\nonumber\\
     &{  =:J_0+J_1+J_2. }   \label{e.4.43}
   \end{align} 
  Our strategy is to control the  above  three quantities by using the
   ideas similar  to  those when we deal  with  the terms $\cI_1$ and $\cI_{2}$   
  in the step 4 of the  proof of Proposition \ref{CharProp} \textbf{(ii)}.  First, from Lemma \ref{TechLemma1} 
  it follows. 
   \begin{equation}\label{I0Bdd}
     \begin{split}
   J_0  \leq& C_p\int_\RR \lc \int_{\RR} \lk\int_\RR G_t(x-y) \lambda(x)dx\rk \lt| \Delta_h u_0(y)\rt|^p dy  \rc^{\frac 2p} |h|^{2H-2} dh \\
     \leq& C_p\int_\RR \lc \int_{\RR} \lt|\Delta_h u_0(y)\rt|^p \lambda(y) dy  \rc^{\frac 2p} |h|^{2H-2} dh=C_p\lk\cN^*_{\frac 12-H,p}u_0\rk^2\,.
     \end{split}
   \end{equation}
   For the   term $J_1$, we can use the   method similar to   that when we obtain \eqref{CharEst2I1.0} and \eqref{CharEst2I1}. This is, a change of variable $y \to z-y$,  
    and   applications of Minkowski's inequality, Jensen's inequality and Lemma \ref{LemDGLcontra} 
   give 
   \begin{equation}\label{I1Bdd}
     \begin{split}
 {   J_1}   \leq& C_{p,H}\int_{0}^{t}\int_\RR (t-s)^{H-1} \bigg(\int_{\RR^3} (t-s)^{1-H}\Big| D_{t-s}(z,h)\Big|^2 |h|^{2H-2} \\
      &\qquad\qquad \times \EE\Blk\big|\Delta_y u_{\ep}^n(t,x)\big|^p\Brk\lambda(x-z) dxdzdh\bigg)^{\frac 2p} |y|^{2H-2} dyds \\
      \leq& C_{p,H}\int_{0}^{t} (t-s)^{H-1}\lk\cN^*_{\frac 12-H,p}u^{n}_{\ep}(s)\rk^2 ds\,.
     \end{split}
   \end{equation} 
Next,  we obtain a bound for $J_2$.  Similar to the obtention of \eqref{CharEst2I2.0}, \eqref{CharEst2I21} and \eqref{CharEst2I22} we  also make a  change of variable 
$y\to z-y$,
and  then split it to two terms to obtain 
   \begin{align*}
     &\cI^{\ep,n}_2(t,x,h)\leq C_{p}\lc \cI^{\ep,n}_{21}(t,x,h)+\cI^{\ep,n}_{22}(t,x,h)\rc \\
     :=& C_{p}\EE\lc \int_{0}^{t}\int_{\RR^2} |\Box_{t-s}(y,z,h)|^2 |\sigma(u^{n}_\ep(s,x))|^2 |y|^{2H-2}dydzds\rc^{\frac p2} \\
     +& C_{p}\EE\lc \int_{0}^{t}\int_{\RR^2} |\Box_{t-s}(y,z,h)|^2|\sigma(u^n_\ep(s,x+z))-\sigma(u^n_\ep(s,x))|^2 |y|^{2H-2}dydzds\rc^{\frac p2}  \,.
   \end{align*}
   Applying Minkowski's inequality,   the condition   $|\sigma(u)|\lesssim |u|+1$,   and Lemma \ref{NGreen}  one  has
   \begin{equation}\label{I21Bdd}
     \begin{split}
   J_{21}:=   &\int_{\RR}\lt| \int_\RR \cI^{\ep,n}_{31}(t,x,h) \lambda(x) dx\rt|^{\frac 2p} |h|^{2H-2} dh \\
     \leq& C_{p,H} \int_{0}^{t} (t-s)^{2H-\frac 32}\lc 1+\|u^n_\ep(s,\cdot)\|_{L^p_{\lambda}(\Omega\times\RR)}^{2}\rc ds\,.
     \end{split}
   \end{equation}
   Again by Minkowski's inequality,  the Lipschitz condition \eqref{LipSigam} on $\sigma$, 
and  Lemma \ref{LemBGcontra} 
we obtain
   \begin{equation}\label{I22Bdd}
     \begin{split}
    J_{22}:=  &\int_{\RR}\lt| \int_\RR \cI^{\ep,n}_{32}(t,x,h) \lambda(x) dx\rt|^{\frac 2p} |h|^{2H-2} dh \\
    \leq & C_{p,H} \int_{0}^{t} (t-s)^{H-1}\lk\cN^*_{\frac 12-H,p}u^n_{\ep}(s)\rk^2 ds\,. 
     \end{split}
   \end{equation} 
{Using that fact that $J_3\le J_{31}+J_{32}$ and 
using \eqref{e.4.43}-\eqref{I22Bdd}}     we obtain 
   \begin{equation}\label{e.4.48}
   \begin{split}
     \lk\cN^*_{\frac 12-H,p}u^{n+1}_\ep(t)\rk^2  \leq& C_{p,H} \lk\cN^*_{\frac 12-H,p}u_0\rk^2+C_{p,H}\int_{0}^{t} (t-s)^{H-1}\lk\cN^*_{\frac 12-H,p}u^n_{\ep}(s)\rk^2 ds\\
     +& C_{p,H}\int_{0}^{t} (t-s)^{2H-\frac 32}\lc 1+\|u^n_\ep(s,\cdot)\|_{L^p_{\lambda}(\Omega\times\RR)}^{2}\rc ds\,.
     \end{split}
   \end{equation}

\noindent\textbf{Step 3.}\
Set
   \[
    \Psi_{\ep}^{n}(t):=\|u^{n}_\ep(t,\cdot)\|_{L^p_{\lambda}(\Omega\times\RR)}^{2}+\lk\cN^*_{\frac 12-H,p}u^{n}_\ep(t)\rk^2.
   \]
   Thus,  {  combining all the estimates (\ref{I0Bdd}), (\ref{I1Bdd}), (\ref{I21Bdd}) and (\ref{I22Bdd}) yields}  
   \begin{align*}
     \Psi_{\ep}^{n+1}(t)&\leq C_{p,H,T}\lc \|u_0\|^2_{L^p_{\lambda}(\Omega\times\RR)} +\lk\cN^*_{\frac 12-H,p}u_0\rk^2+\int_{0}^{t} (t-s)^{2H-\frac 32} \Psi_{\ep}^{n}(s) ds\rc\,.
   \end{align*}
Now  it is relatively 
 easy to see by fractional Gronwall lemma (e.g.   \cite[Lemma 1]{LHH2020})
   \[
    \sup_{n\geq 1}\sup_{t\in[0,T]}\Psi_{\ep}^{n}(t)\leq C_{T,p,H}<\infty\,.
   \] 
For any fixed $\ep>0$   since $u^n_\ep$ converges to $u_{\ep}$  a.s. as $n\to\infty$, we have by Fatou's lemma
   \begin{align*}
     \|u_{ {\ep}}(t,\cdot)\|_{L_{\lambda}^p(\Omega\times\RR)} &=\lc\int_{\RR} \EE\lk\lim_{n\rightarrow \infty}|u^n_\ep(t,x)|^p\rk \lambda(x)dx\rc^{\frac{1}{p}} \\
       &\leq \varliminf \limits_{n\rightarrow \infty}\lc\int_{\RR} \EE\lk|u^n_\ep(t,x)|^p\rk \lambda(x)dx\rc^{\frac{1}{p}}\leq  \sup_{n\geq 1}\sup_{t\in[0,T]}\Psi_{\ep}^{n}(t)<\infty\,.
   \end{align*}
   Thus,   we   conclude that $\sup\limits_{\ep>0}\sup\limits_{t\in[0,T]} \|u_{\ep}(t,\cdot)\|_{L_{\lambda}^p(\Omega\times\RR)}$ is finite. 
 {   On the other hand, for  any $t, x$ and $h$ we have $|u^n_\ep(t,x+h)-u^n_\ep(t,x)|^2\rightarrow |u_{\ep}(t,x+h)-u_{\ep}(t,x)|^2$ a.s. So,  on the domain $|h|\leq 1$} 
   \begin{align*}
      &\int_{|h|\leq 1} \|u_{\ep}(t,\cdot+h)-u_{\ep}(t,\cdot)\|^2_{L_{\lambda}^p(\Omega\times\RR)} |h|^{2H-2}dh \\
     \leq & \varliminf \limits_{n\rightarrow \infty} \int_{|h|\leq 1} \|u^n_\ep(t,\cdot+h)-u^n_\ep(t,\cdot)\|^2_{L_{\lambda}^p(\Omega\times\RR)} |h|^{2H-2}dh.
   \end{align*}
   For $|h|\geq 1$, we simply bound $\|u_{\ep}(t,\cdot+h)-u_{\ep}(t,\cdot)\|^2_{L_{\lambda}^p(\Omega\times\RR)}$ by $2\|u_{\ep}^n(t,\cdot)\|^2_{L_{\lambda}^p(\Omega\times\RR)}$, {  which is uniform bounded with respect to $t,\ep$ and $n$. }  When $ H<\frac 12$, $\int_{|h|>1}|h|^{2H-2}<\infty$. Thus,  we have  that
   \begin{align}
   \sup_{t\in[0,T]} \cN^*_{\frac 12-H,p}u_{\ep}(t)=&\sup_{t\in[0,T]}\lc\int_{\RR} \|u_{\ep}(t,\cdot+h)-u_{\ep}(t,\cdot)\|^2_{L_{\lambda}^p(\Omega\times\RR)} |h|^{2H-2}dh\rc^{\frac 12}\nonumber \\
   	\leq& C_H\sup_{n\geq 1}\sup_{t\in[0,T]}\Psi_{\ep}^{n}(t)<\infty\,.
   	\label{e.4.53a} 
   \end{align}
   Therefore,    $\sup\limits_{\ep>0}\sup\limits_{t\in[0,T]} \cN^*_{\frac 12-H,p}u_{\ep}(t)$ is finite. 
      
   In conclusion, {   we have proved $\sup_{\ep>0}\|u_\ep\|_{\cZ_{\lambda,T}^p}:=\sup\limits_{\ep>0}\sup\limits_{t\in[0,T]} \|u_{\ep}(t,\cdot)\|_{L_{\lambda}^p(\Omega\times\RR)}+\sup\limits_{\ep>0}\sup\limits_{t\in[0,T]}\cN^*_{\frac 12-H,p}u_{\ep}(t)$ is finite.   }
\end{proof}
 Recall that $(\cC([0, T]\times\RR),d_{\cC})$ is the metric space with the metric $d_{\cC}$ defined by \eqref{e.4.metric}.
 \begin{lemma}\label{ConveInC}  Let $  u_\ep\in \cZ_{\lambda,T}^p$.
   If $u_\ep\rightarrow u$  almost surely
   in $(\cC([0, T]\times\RR),d_{\cC})$   as $\ep\rightarrow0$, then $u$ is also in $\cZ_{\lambda,T}^p$.
 \end{lemma}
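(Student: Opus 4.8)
The plan is to exhibit the $\cZ_{\lambda,T}^p$ norm as a lower semicontinuous functional with respect to the pointwise almost sure convergence furnished by the hypothesis, and then to combine this with the uniform bound already proved in Lemma \ref{UniBExist}. First I would record, by Lemma \ref{UniBExist} applied to the approximate solutions, that $M:=\sup_{\ep>0}\|u_\ep\|_{\cZ_{\lambda,T}^p}<\infty$, and fix a sequence $\ep_k\downarrow 0$. Since convergence in the metric $d_{\cC}$ of \eqref{e.4.metric} forces $\max_{0\le t\le T,\,|x|\le n}(|u_{\ep_k}(t,x)-u(t,x)|\wedge 1)\to 0$ for every $n$, on a single event of full probability the $u_{\ep_k}$ converge to $u$ uniformly on every compact set; in particular, for each fixed $(t,x)$ and each fixed increment $h$ we have almost surely $u_{\ep_k}(t,x)\to u(t,x)$ and $u_{\ep_k}(t,x+h)-u_{\ep_k}(t,x)\to u(t,x+h)-u(t,x)$. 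This pointwise convergence is exactly what Fatou's lemma requires.

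For the first piece of the norm I would fix $t\in[0,T]$ and estimate $\|u(t,\cdot)\|_{L^p_{\lambda}(\Omega\times\RR)}$. Using Fatou in $\Omega$ gives $\EE[|u(t,x)|^p]\le\liminf_k\EE[|u_{\ep_k}(t,x)|^p]$ for each $x$, and then integrating against $\lambda(x)\,dx$ and applying Fatou once more in the spatial variable yields
\[
\|u(t,\cdot)\|_{L^p_{\lambda}(\Omega\times\RR)}\le\liminf_k\|u_{\ep_k}(t,\cdot)\|_{L^p_{\lambda}(\Omega\times\RR)}\le M.
\]
Taking the supremum over $t\in[0,T]$ controls the first term of $\|u\|_{\cZ_{\lambda,T}^p}$ by $M$.

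For the second piece I would apply the same three-layered Fatou argument, now to the increments, mirroring Step 3 of the proof of Lemma \ref{UniBExist}. For each fixed $h$, Fatou in $\Omega$ followed by Fatou in $\RR$ gives $\|u(t,\cdot+h)-u(t,\cdot)\|^2_{L^p_{\lambda}(\Omega\times\RR)}\le\liminf_k\|u_{\ep_k}(t,\cdot+h)-u_{\ep_k}(t,\cdot)\|^2_{L^p_{\lambda}(\Omega\times\RR)}$ (here I use that $x\mapsto x^{2/p}$ is continuous and increasing to pass the power through the $\liminf$). A final application of Fatou in $h$ against the measure $|h|^{2H-2}\,dh$ then gives, for each fixed $t$,
\[
\bigl[\cN^*_{\frac 12-H,p}u(t)\bigr]^2=\int_{\RR}\|u(t,\cdot+h)-u(t,\cdot)\|^2_{L^p_{\lambda}(\Omega\times\RR)}|h|^{2H-2}dh\le\liminf_k\bigl[\cN^*_{\frac 12-H,p}u_{\ep_k}(t)\bigr]^2\le M^2.
\]
Taking the supremum over $t$ and combining with the previous paragraph bounds $\|u\|_{\cZ_{\lambda,T}^p}$ by a constant multiple of $M$, whence $u\in\cZ_{\lambda,T}^p$.

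The Fatou applications are routine; the only point requiring genuine attention is that one applies Fatou on the \emph{entire} $h$-line in one stroke, so that no crude comparison of the translated weight $\lambda(\cdot+h)$ with $\lambda(\cdot)$ is needed. If instead one were to split the integral into $|h|\le 1$ and $|h|>1$ as in Lemma \ref{UniBExist}, the region $|h|>1$ would force a weight-shift estimate, which is the only place where the decay $|h|^{2H-2}$ (integrable at infinity because $H<\tfrac12$) must be balanced against the polynomial growth of $\lambda(\cdot-h)/\lambda(\cdot)$; I therefore expect the minor bookkeeping of this split, rather than the semicontinuity itself, to be the main (and mild) obstacle, and the cleanest route avoids it altogether.
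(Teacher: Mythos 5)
Your proof is correct, and its skeleton is the same as the paper's: locally uniform (hence pointwise) a.s.\ convergence extracted from $d_{\cC}$-convergence, Fatou's lemma to get lower semicontinuity of the two pieces of the $\cZ_{\lambda,T}^p$-norm, and the uniform bound from Lemma \ref{UniBExist}. The two differences are in execution, and both favor your version. First, you make explicit the ingredient $M=\sup_{\ep>0}\|u_\ep\|_{\cZ_{\lambda,T}^p}<\infty$; the paper's proof uses it silently, and without some such uniform control the lemma as literally stated would fail (each $\|u_\ep\|_{\cZ_{\lambda,T}^p}$ being finite does not make $\varliminf_{\ep\to0}$ finite, nor the supremum over $t$ finite --- e.g.\ deterministic truncations of $e^{|x|}$ converge locally uniformly to a limit outside $L^p_\lambda(\RR)$). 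Second, for the $\cN^*_{\frac12-H,p}$ term the paper splits the $h$-integral into $|h|\le 1$ (Fatou, as in Step 3 of Lemma \ref{UniBExist}) and $|h|\ge 1$, where it bounds $\|u(t,\cdot+h)-u(t,\cdot)\|^2_{L^p_{\lambda}(\Omega\times\RR)}$ crudely by $2\|u(t,\cdot)\|^2_{L^p_{\lambda}(\Omega\times\RR)}$ and invokes integrability of $|h|^{2H-2}$ at infinity; as you observe, this bound silently ignores the weight shift, since $\|u(t,\cdot+h)\|_{L^p_\lambda(\Omega\times\RR)}$ involves $\lambda(\cdot-h)$ rather than $\lambda(\cdot)$, and repairing it costs a factor of order $(1+|h|^2)^{2(1-H)/p}$ whose integrability against $|h|^{2H-2}dh$ would impose an extra constraint on $p$ beyond $p>\frac{6}{4H-1}$. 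Your single application of Fatou over the whole $h$-line --- legitimate because Fatou needs only nonnegativity and measurability of the integrand, not local finiteness of the measure $|h|^{2H-2}dh$ near the origin --- bypasses this issue entirely and is the cleaner route.
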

 \begin{proof}
   Since $u_\ep$ converges to $u$ in $(\cC([0, T]\times\RR),d_{\cC})$ almost surely, we have $u_\ep(t,x)\rightarrow u(t,x)$ for each $(t,x)\in[0,T]\times\RR$ almost surely. Thus
   \begin{align}
     \|u(t,\cdot)\|_{L_{\lambda}^p(\Omega\times\RR)} 
       &\lesssim \varliminf \limits_{\ep\rightarrow 0}\lc\int_{\RR} \EE\lk|u_\ep(t,x)|^p\rk \lambda(x)dx\rc^{\frac{1}{p}}<\infty.\label{e.4.49}
   \end{align}
   This means  that $\sup\limits_{t\in[0,T]} \|u(t,\cdot)\|_{L_{\lambda}^p(\Omega\times\RR)}$ is finite.

   On the other hand, for any $  x,h$ we have $|u_\ep(t,x+h)-u_\ep(t,x)|^2\rightarrow |u(t,x+h)-u(t,x)|^2$ almost surely.   So, 
    on the domain $|h|\leq 1$
   and $|h|\geq 1$, we can simply repeat  the same procedure as in  the 
  {Step 3}  of  the proof of Lemma \ref{UniBExist} 
   but replacing $\varliminf \limits_{n\rightarrow \infty}$ by $\varliminf \limits_{\ep\rightarrow 0}$, and bound $\|u(t,\cdot+h)-u(t,\cdot)\|^2_{L_{\lambda}^p(\Omega\times\RR)}$ by $2\|u(t,\cdot)\|^2_{L_{\lambda}^p(\Omega\times\RR)}$,   which is finite. 
Thus, similar to \eqref{e.4.53a} we   have 
   \[
    \sup_{t\in[0,T]} \cN^*_{\frac 12-H,p}u(t)=\sup_{t\in[0,T]}\lc\int_{\RR} \|u(t,\cdot+h)-u(t,\cdot)\|^2_{L_{\lambda}^p(\Omega\times\RR)} |h|^{2H-2}dh\rc^{\frac 12}<\infty.
   \]
   Together with \eqref{e.4.49}, this implies  that $u\in\cZ^p_{\lambda,T}$.
\end{proof}

 \begin{lemma}\label{TimeSpaceRegBdd}
   Let $u_\ep$ be   the  {  approximate  mild solution }  defined by  \eqref{MildSolRegu} and assume   that $u_0(x)$ belongs to $\cZ_{\lambda,0}^p$.
   Then, we have the following statements. 
   \begin{enumerate}[leftmargin=*]
     \item[\textbf{(i)}] If $p>\frac{6}{4H-1}$, then
     \begin{equation}
       \Big\|\sup\limits_{t\in[0,T], x\in \RR}\lambda^{\frac{1}{p}}(x) \cN_{\frac 12-H}u_\ep(t,x)\Big\|_{L^{p}(\Omega)}\leq C_{T,H}(\|u_\ep\|_{\cZ_{\lambda,T}^p}+1).
 \label{e.4.55a}     \end{equation}
     \item[\textbf{(ii)}] If $p>\frac{3}{H}$, then
     \begin{equation}
       \Big\|\sup\limits_{\substack{t,t+h\in[0,T] \\ x\in \RR}}\lambda^{\frac{1}{p}}(x) \blk u_\ep(t+h,x)-u_\ep(t,x)\brk\Big\|_{L^p(\Omega)}\leq C_{T,H}|h|^{\gamma}(\|u_\ep\|_{\cZ_{\lambda,T}^p}+1),
  \label{e.4.56a}     \end{equation}
     for all $0<\gamma<\frac H2-\frac{3}{2p}$.
     \item[\textbf{(iii)}] If $p>\frac{3}{H}$, then
     \begin{equation}
       \lt\|\sup\limits_{\substack{t\in[0,T] \\ x,y\in \RR}}\frac{u_\ep(t,x)-u_\ep(t,y)}{\lambda^{-\frac{1}{p}}(x)+\lambda^{-\frac{1}{p}}(y)} \rt\|_{L^p(\Omega)}\leq C_{T,H}|x-y|^{\gamma}(\|u_\ep\|_{\cZ_{\lambda,T}^p}+1),
   \label{e.4.57a}    \end{equation}
     for all $0<\gamma<H-\frac{3}{p}$.
   \end{enumerate}
 \end{lemma}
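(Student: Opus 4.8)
The plan is to derive all three estimates from the bounds already established for the stochastic convolution in Proposition \ref{CharProp}, by splitting the approximate solution into its deterministic and stochastic parts. Write
\[
u_\ep(t,x)=G_t\ast u_0(x)+\Phi_\ep(t,x),\qquad
\Phi_\ep(t,x):=\int_0^t\int_\RR G_{t-s}(x-y)\,v_\ep(s,y)\,W_\ep(ds,dy),
\]
with $v_\ep(s,y):=\sigma(s,y,u_\ep(s,y))$. First I would observe that the smoothing of the noise is harmless: as in \eqref{uepLp}, integrating against $W_\ep$ only inserts the factor $e^{-\ep|\xi|^2}\le 1$ into the spectral representation underlying the Burkholder--Davis--Gundy inequality (Proposition \ref{HBDG}) and the isometry \eqref{hinner.1}. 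Consequently every estimate in the proof of Proposition \ref{CharProp} remains valid verbatim for $\Phi_\ep$, with constants independent of $\ep$.

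Next I would control the integrand $v_\ep$. By hypothesis \textbf{(H1)}, $\sigma$ is of linear growth and uniformly Lipschitz in $u$, so $\|v_\ep(s,\cdot)\|_{L^p_\lambda(\Omega\times\RR)}\lesssim 1+\|u_\ep(s,\cdot)\|_{L^p_\lambda(\Omega\times\RR)}$, and the Lipschitz bound \eqref{LipSigam} gives $\|\Delta_h v_\ep(s,\cdot)\|_{L^p_\lambda}\lesssim \|\Delta_h u_\ep(s,\cdot)\|_{L^p_\lambda}$, whence $\cN^*_{\frac12-H,p}v_\ep(s)\lesssim \cN^*_{\frac12-H,p}u_\ep(s)$. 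Combining these with Lemma \ref{UniBExist}, which already guarantees $\sup_{\ep>0}\|u_\ep\|_{\cZ_{\lambda,T}^p}<\infty$, yields $\|v_\ep\|_{\cZ_{\lambda,T}^p}\le C(\|u_\ep\|_{\cZ_{\lambda,T}^p}+1)$ uniformly in $\ep$. Applying parts \textbf{(ii)}, \textbf{(iii)} and \textbf{(iv)} of Proposition \ref{CharProp} to $\Phi_\ep$ then produces exactly the stochastic-convolution contributions to \eqref{e.4.55a}, \eqref{e.4.56a} and \eqref{e.4.57a}, respectively, with the claimed thresholds $p>\frac{6}{4H-1}$, $p>\frac 3H$ and the stated ranges of $\gamma$.

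It remains to bound the deterministic heat-flow term $G_t\ast u_0$ with the same weights and Hölder exponents. Here the increment operators commute with convolution, $\Delta_h(G_t\ast u_0)=G_t\ast(\Delta_h u_0)$, so the relevant spatial fractional-derivative and difference quantities can be estimated by Minkowski's and Jensen's inequalities together with Lemma \ref{TechLemma1} and Lemma \ref{LemDGLcontra}, reducing them to $\|u_0\|_{L^p_\lambda}$ and $\cN^*_{\frac12-H,p}u_0$, both finite since $u_0\in\cZ_{\lambda,0}^p$. To control the suprema over $(t,x)$ inside $L^p(\Omega)$, I would apply the same factorization \eqref{e.4.12} and embedding used for $\Phi_\ep$, now with $J_\alpha$ replaced by a heat-flow term carrying no extra time singularity; the temporal and spatial Hölder bounds then mimic Steps 5 and 6 of the proof of Proposition \ref{CharProp}, exploiting the smoothing inequality \eqref{e.4.G_difference} and the Gaussian scaling, and are in fact simpler than the stochastic case.

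The main obstacle I expect is not any single inequality but the bookkeeping needed to verify that the regularization enters only through the factor $e^{-\ep|\xi|^2}\le1$, so that all constants are genuinely uniform in $\ep$ --- this uniformity is precisely what allows the bounds to pass to the limit $\ep\to0$ later --- together with matching the Hölder exponents and weights of the deterministic term to those dictated by the stochastic term, which forces the same threshold conditions on $p$ and the same admissible ranges for $\gamma$.
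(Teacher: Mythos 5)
Your proposal is correct and follows essentially the same route as the paper: decompose $u_\ep$ into the deterministic heat-flow term $G_t\ast u_0$ and the smoothed stochastic convolution, apply Proposition \ref{CharProp} \textbf{(ii)}--\textbf{(iv)} to the latter (the smoothing only contributes the factor $e^{-\ep|\xi|^2}\le 1$, exactly as the paper encodes via $J^\ep_\alpha$), use the linear growth/Lipschitz hypothesis to bound $\|\sigma(\cdot,\cdot,u_\ep)\|_{\cZ^p_{\lambda,T}}\lesssim \|u_\ep\|_{\cZ^p_{\lambda,T}}+1$, and control the $u_0$-term from $u_0\in\cZ^p_{\lambda,0}$. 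You merely spell out details (the weighted estimates for $G_t\ast u_0$ and the $\ep$-uniformity) that the paper's very terse proof leaves implicit.
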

 \begin{proof}  Denote  for $\alpha\in[0,1]$
    \[
      J^\ep_{\alpha}(r,\xi)=\int_{0}^{r}\int_{\RR}\int_{\RR} (r-s)^{-\alpha} G_{r-s}(\xi-z)\sigma(u_\ep(s,z))G_\ep(z-y)dz W(ds,dy).
     \]
     Then,  Fubini's theorem    implies
\begin{align*}
      u_\ep(t,x)
      =&G_t\ast u_0(x)+\frac{\sin(\pi\alpha)}{\pi}\int_{0}^{t}\int_{\RR}(t-r)^{\alpha-1}G_{t-r}(x-\xi)J^\ep_{\alpha}(r,\xi)d\xi dr\\
      =&u_{1 }(t,x)+u_{2,\ep}(t,x)\,. 
     \end{align*}  
 Applying Proposition \ref{CharProp}
      \textbf{(ii)}, \textbf{(iii)}, \textbf{(iv)}   to 
      $u_{2,\ep}(t,x)$ yields \eqref{e.4.55a}-\eqref{e.4.57a} without the constant term $1$. However, from the assumption    that $u_0(x)$ belongs to $\cZ_{\lambda,0}^p$ we see that left hand sides of \eqref{e.4.55a}-\eqref{e.4.57a}
      are finite when $u_\ep(t,x) $ is  replaced by $u_{1 }(t,x)$. 
Combining the bounds for  $u_{1 }(t,x)$ and $u_{2, \ep  }(t,x)$  proves the lemma. 
 \end{proof}
%
 
\begin{proof}[Proof of Theorem \ref{WeakSol}]
 We still assume $\sigma(t,x,u)=\sigma(u)$ to simplify the notations.
 From   Lemma \ref{UniBExist} and  Lemma \ref{TimeSpaceRegBdd}
  \textbf{(ii)} and \textbf{(iii)} it follows    that the two conditions of Theorem \ref{TightMoment} are satisfied. Hence,  the  probability measures on the space $(\cC([0, T]\times\RR),\sB(\cC([0, T]\times\RR)),d_{\cC})$ corresponding to  the processes $\{u_{\ep}\,, \ep\in (0, 1]\}$   are tight. Thus, there is a subsequence $\ep_n\downarrow 0$ such that    $u_n=u_{\ep_n}$ convergence weakly. By Skorohod representation theorem, there is a probability space $(\widetilde{\Omega},\widetilde{\cF},\widetilde{\bP})$ carrying the subsequence $\widetilde{u}_{n_j}$ and noise $\widetilde{W}$ such that the finite dimensional distributions of $(\widetilde{u}_{n_j},\widetilde{W})$ and $(u_{n_j},W)$ coincide. Moreover,   we have
 \begin{equation}\label{Skorohod}
   \widetilde{u}_{n_j}(t,x)\rightarrow \widetilde{u}(t,x)\,~\text{in}~(\cC([0, T]\times\RR),d_{\cC}) \quad
   \hbox{$\widetilde{\bP}$-almost surely}
 \end{equation}
 for a certain stochastic process $\widetilde{u}$ as $j\rightarrow \infty$. By Lemma \ref{ConveInC} we see that $\widetilde{u}$ belongs to space $\widetilde \cZ_{\lambda,T}^p$ { with respect to   the new probability $\widetilde{\bP}$. }  We want to show that  $\widetilde{u}$ is a weak solution to   \eqref{SHE}.

  Define the filtration $\widetilde{\cF}_t$ 
  to  be  the filtration generated by $\widetilde{W}$.  We claim that $\widetilde{u}_{n_j}$ satisfies \eqref{SHE} with $W$ replaced by $\widetilde{W}$, namely,
 \begin{equation}\label{WeakSHE}
   \widetilde{u}_{n_j}(t,x)=G_t\ast u_0(x)+\int_{0}^{t}\int_{\RR} G_{t-s}(x-\cdot)\sigma(\widetilde{u}_{n_j}(s,\cdot))\ast G_{\ep_j}(y)\widetilde{W}(ds,dy)\,.
 \end{equation}
To show the above identity  it is sufficient to prove that  for any $Z\in L^2(\widetilde{\Omega},\widetilde{\bP})$ one has
\begin{align}
\widetilde{\EE}[\widetilde{u}_{n_j}(t,x) Z]= \widetilde{\EE}&\bigg[ G_t\ast u_0(x)Z\nonumber\\
&+\int_{0}^{t}\int_{\RR} G_{t-s}(x-\cdot)\sigma(\widetilde{u}_{n_j}(s,\cdot))\ast G_{\ep_j}(y)\widetilde{W}(ds,dy) Z\bigg]\,,
\label{e.4.55}
\end{align}
where $\widetilde{\EE}$ means the expectation under $\widetilde{\bP}$.

For any $\phi\in \cD(\RR)$, denote
\[
\widetilde{W}_t(\phi)=\int_\RR \phi(x) \tilde W(t, dx)\,;\quad
 {W}_t(\phi)=\int_\RR \phi(x)   W(t, dx)\,. 
\]
It is routine to argue that  the set
\[
\cS:=\bigg\{ f(\widetilde{W}_{t_1}(\phi),\cdots,\widetilde{W}_{t_n}(\phi))\,, \ 0\leq t_1< \cdots < t_n\leq T\,\ f\in \cC_0(\RR^n)\
\bigg\}
\]
 are dense in $L^2(\widetilde{\Omega},\widetilde{\bP},\widetilde{\cF}_T)$. This means that  it is  sufficient  to choose
$ Z=f(\widetilde{W}_{t_1}(\phi),\\ \cdots,\widetilde{W}_{t_n}(\phi))$  in \eqref{e.4.55},
 which is true because we have the following identities:
 \[
  \widetilde{\EE}[\widetilde{u}_{n_j}(t,x) f(\widetilde{W}_{t_1}(\phi),\cdots,\widetilde{W}_{t_n}(\phi))]={\EE}[{u}_{n_j}(t,x) f({W}_{t_1}(\phi),\cdots,{W}_{t_n}(\phi))]\,;
 \]
 \[
  \widetilde{\EE}\lk G_t\ast u_0(x) f(\widetilde{W}_{t_1}(\phi),\cdots,\widetilde{W}_{t_n}(\phi))\rk={\EE}\lk G_t\ast u_0(x) f({W}_{t_1}(\phi),\cdots,{W}_{t_n}(\phi))\rk\,;
 \]
 and
 \begin{align*}
   &\widetilde{\EE}\lk \int_{0}^{t}\int_{\RR} G_{t-s}(x-\cdot)\sigma(\widetilde{u}_{n_j}(s,\cdot))\ast G_{\ep_j}(y)\widetilde{W}(ds,dy) f(\widetilde{W}_{t_1}(\phi),\cdots,\widetilde{W}_{t_n}(\phi))\rk \\
  =& {\EE}\lk \int_{0}^{t}\int_{\RR} G_{t-s}(x-\cdot)\sigma({u}_{n_j}(s,\cdot))\ast G_{\ep_j}(y){W}(ds,dy) f({W}_{t_1}(\phi),\cdots,{W}_{t_n}(\phi))\rk
 \end{align*}
 due to the fact that the finite dimensional distributions of $(\widetilde{u}_{n_j},\widetilde{W})$ coincide with that of $(u_{n_j},W)$.
 Therefore,    $\tilde u_{n_j}(t,x)$ satisfies 
 \eqref{e.4.55}, and hence it satisfies (\ref{WeakSHE}).

From \eqref{Skorohod} and \eqref{WeakSHE}  it follows  that $\widetilde{u}$ is   a mild solution to \eqref{SHE} with  $W$ replaced by  $\widetilde{W}$. Therefore,  we have proved 
 the existence of a  weak solution to \eqref{SHE}.

Moreover,  for any $\gamma\in(0,H-\frac{3}{p})$ and for
any compact set $\TT\subseteq [0,T]\times\RR$, Lemma \ref{TimeSpaceRegBdd}  (parts \textbf{(ii)} and \textbf{(iii)})  implies that  there exists constant $C$ such that
   \begin{equation}\label{STRegSol}
     \widetilde{\EE}\lc \sup\limits_{(t,x),(s, y)\in \TT} \bigg|\frac{\tilde{u}(t,x)-\tilde{u}(s,y)}{|t-s|^{\frac{\gamma}{2}}+|x-y|^{\gamma}}\bigg|^p\rc \leq C\|\tilde{ u}\|^p_{\cZ^p_{\lambda,T}}\,.
   \end{equation}
This combined with the Kolmogorov lemma implies the desired
H\"older continuity.
\end{proof}

\section{Pathwise Uniqueness and Strong Existence of solutions}
\label{strong}


In this section we prove   the pathwise uniqueness and  the  existence of strong solution for the equation \eqref{SHE}. {   It is well known that once pathwise uniqueness is achieved, together with  the  existence of weak solution proved in previous section,  we can conclude the   existence of the unique  strong  solutions to \eqref{SHE} by,   for example,   the Yamada-Watanabe theorem  (\cite{IW}). }  Therefore,  we only need to focus on the proof of pathwise uniqueness.
\begin{proof}[Proof of Theorem \ref{ModUniq}]
The proof follows the   strategy in the proof of Theorem 4.3 of  \cite{HHLNT2017} combined with   Proposition \ref{CharProp}  (part \textbf{(ii)}).

Define  the following  stopping times
  \begin{equation*}
    \begin{split}
        T_k:= \inf \Big\{&t\in[0,T]:  \sup_{0\leq s\leq t,x\in\RR} \lambda^{\frac 2p}(x)\cN_{\frac 12-H}u(s,x)\geq k, \\
         &\text{or}~\sup_{0\leq s\leq t,x\in\RR} \lambda^{\frac 2p}(x)\cN_{\frac 12-H}v(s,x)\geq k  \Big\}\,,\quad k=1, 2, \cdots
    \end{split}
  \end{equation*}
  Proposition \ref{CharProp},  part \textbf{(ii)}  implies that $T_k \uparrow T$ almost surely as $k\rightarrow\infty$. We need to find appropriate bounds for   the following two quantities:
  \[
   I_1(t)=\sup_{x\in\RR}\EE\blk\1_{\{t<T_k\}}|u(t,x)-v(t,x)|^2\brk
  \]
  and
  \[
   I_2(t)=\sup_{x\in\RR}\EE\lt[\int_{\RR}\1_{\{t<T_k\}}|u(t,x)-v(t,x)-u(t,x+h)+v(t,x+h)|^2|h|^{2H-2}dh\rt].
  \]
First, it is easy to see
  \begin{equation*}
    \begin{split}
       &\1_{\{t<T_k\}}(u(t,x)-v(t,x)) \\
      =&\1_{\{t<T_k\}} \int_{0}^{t}\int_{\RR}G_{t-s}(x-y)\1_{\{s<T_k\}}  [ {\sigma}(s,y, u(s,y))- {\sigma}(s,y , v(s,y))] W(ds,dy).
    \end{split}
  \end{equation*}
  Recall $D_t(x,h)$ defined in \eqref{FirDiff} and denote $\triangle(t,x,y)=\sigma(t,x,u(t,y))-\sigma(t,x,v(t,y))$. We can decompose
  \begin{align}
       &\EE\blk\1_{\{t<T_k\}}|u(t,x)-v(t,x)|^2\brk \nonumber\\
     \lesssim& \EE\bigg(\int_{0}^{t}\int_{\RR^2}\1_{\{s<T_k\}} |D_{t-s}(x-y,h)|^2 [ \triangle(s,y,y)]^2|h|^{2H-2}dhdyds\bigg) \nonumber\\
     +&\EE\bigg(\int_{0}^{t}\int_{\RR^2}\1_{\{s<T_k\}} G^2_{t-s}(x-y-h)  [\triangle(s,y+h,y)-\triangle(s,y,y) ]^2|h|^{2H-2}dhdyds\bigg) \nonumber\\
     +&\EE\bigg(\int_{0}^{t}\int_{\RR^2}\1_{\{s<T_k\}}  G_{t-s}^2(x-y)  [\triangle(s,y,y+h)-\triangle(s,y,y) ]^2|h|^{2H-2}dhdyds\bigg)\nonumber\\
     &{  =: J_1+J _2+J _3} \,.
     \label{e.5.1}
  \end{align}
The assumption    \eqref{DuSigam} of $\sigma$   and the  equality  \eqref{FirDiffIneq}   can be used to dominate the above
first term $J_1$. This is,
  \begin{align*}
  {   J_1 }\lesssim& \EE\bigg(\int_{0}^{t}\int_{\RR^2}\1_{\{s<T_k\}} |D_{t-s}(x-y,h)|^2 |u(s,y)-v(s,y)|^2|h|^{2H-2}dhdyds\bigg)\\
    \lesssim& \int_{0}^{t}(t-s)^{H-1} \sup_{y\in\RR}\EE\lk\1_{\{s<T_k\}}|u(s,y)-v(s,y)|^2\rk ds=\int_{0}^{t}(t-s)^{H-1} I_1(s)ds\,.
  \end{align*}
  Using the properties \eqref{DuSigam} of $\sigma$,   we
  have if $|h|>1$
  \begin{align*}
     &[\triangle(s,y+h,y)-\triangle(s,y,y)]^2\lesssim |u(s,y)-v(s,y)|^2 \\
    =&\lt|\int_{u}^{v} [\sigma'_{\xi}(s,y+h,\xi)-\sigma'_{\xi}(s,y,\xi)] d\xi\rt|^2\lesssim |u(s,y)-v(s,y)|^2\,,
  \end{align*}
  and if $|h|\leq 1$  (with the help of  additional properties \eqref{DxuSigam})
  \begin{align*}
    &[\triangle(s,y+h,y)-\triangle(s,y,y)]^2 \\
   =&\lt|\int_{u}^{v} [\sigma'_{\xi}(s,y+h,\xi)-\sigma'_{\xi}(s,y,\xi)] d\xi\rt|^2\lesssim |h|^2|u(s,y)-v(s,y)|^2\,.
  \end{align*}
  Thus,  the second term  $J_2$  in \eqref{e.5.1}
  is bounded by
  \begin{align*}
   J_2= &\EE\bigg(\int_{0}^{t}\int_{\RR}\int_{|h|>1}\1_{\{s<T_k\}} G^2_{t-s}(x-y-h) |u(s,y)-v(s,y)|^2|h|^{2H-2}dhdyds\bigg)\\
    &+\EE\bigg(\int_{0}^{t}\int_{\RR}\int_{|h|\leq 1}\1_{\{s<T_k\}} G^2_{t-s}(x-y-h) |u(s,y)-v(s,y)|^2|h|^{2H}dhdyds\bigg)\\
   \lesssim& \int_{0}^{t}I_1(s)\lc\int_{\RR} G^2_{t-s}(x-y)dy\rc ds\lesssim\int_{0}^{t} (t-s)^{-\frac 12} I_1(s)ds\,.
  \end{align*}
  For the last term  $J_3$ 
  in \eqref{e.5.1} we have by \eqref{DuSigam},  \eqref{DuSigamAdd}
  \begin{align*}
    &\big|\triangle(s,y,y+h)-\triangle(s,y,y)\big|^2 \\
    =&\Big|\int_{0}^{1}[u(s,y+h)-v(s,y+h)]\sigma'_{\xi}(s,y,\theta u(s,y+h)+(1-\theta) v(s,y+h))d\theta \\
    &\quad-\int_{0}^{1}[u(s,y)-v(s,y)]\sigma'_{\xi}(s,y,\theta u(s,y)+(1-\theta) v(s,y))d\theta \Big|^2 \,. 
  \end{align*}
  Noticing the additional uniform decay assumption
 \eqref{DuSigam}, we have
 \begin{align*}
    &\big|\triangle(s,y,y+h)-\triangle(s,y,y)\big|^2 \\
    \lesssim& |u(s,y+h)-v(s,y+h)-u(s,y)+v(s,y)|^2 \\
    +&\lambda^{\frac{2}{p}}(y)|u(s,y)-v(s,y)|^2\cdot\blk|u(s,y+h)-u(s,y)|^2+|v(s,y+h)-v(s,y)|^2\brk\,.
  \end{align*}
Thus,   we can dominate  the last term in \eqref{e.5.1} by
  \begin{equation*}
    J_3\lesssim   k\int_{0}^{t} (t-s)^{-\frac 12} \blk I_1(s)+I_2(s)\brk ds\,.
  \end{equation*}
 Summarizing  the above  estimates   we have
  \begin{equation*}
    I_1(t)\lesssim k\int_{0}^{t} (t-s)^{H-1}\blk I_1(s)+I_2(s)\brk ds\,.
  \end{equation*}
  The similar procedure can be applied to estimate  the term $I_2(t)$  to obtain 
  \begin{equation*}
    I_2(t)\lesssim k\int_{0}^{t} (t-s)^{2H-\frac{3}{2}}\blk I_1(s)+I_2(s)\brk ds\,.
  \end{equation*}
  As a consequence,
  \begin{equation*}
    I_1(t)+I_2(t)\lesssim k\int_{0}^{t} (t-s)^{2H-\frac{3}{2}}\blk I_1(s)+I_2(s)\brk ds.
  \end{equation*}
Now  Gronwall's lemma implies $I_1(t)+I_2(t)=0$ for all $t\in[0,T]$.
In   particular,  we have
  \[
   \EE\blk\1_{\{t<T_k\}}|u(t,x)-v(t,x)|^2\brk=0\,.
  \]
  Thus,  we have $u(t,x)=v(t,x)$ almost surely on $\{t<T_k\}$ for all $k\geq 1$, and the fact $T_k \uparrow \infty$ a.s as $k$ tends to infinity necessarily indicate $u(t,x)=v(t,x)$ a.s. for every $t\in[0,T]$ and $x\in\RR$.

  It is clear   that the hypothesis \textbf{(H2)} implies the hypothesis  \textbf{(H1)}. So the existence of a H\"{o}lder continuous modification version of the solution follows from Theorem \ref{WeakSol}. We have
  then  completed the proof  of Theorem \ref{ModUniq}.
\end{proof}

\bibliographystyle{amsplain}

\begin{thebibliography}{10}

\bibitem{adler} Adler, R.
An introduction to continuity, extrema, and related topics for general Gaussian processes.
Institute of Mathematical Statistics Lecture Notes-Monograph Series, 12. Institute of Mathematical Statistics, Hayward, CA, 1990.
\bibitem{BCD} Bahouri, H.,  Chemin, J., and  Danchin, R.  Fourier analysis and nonlinear partial differential equations.
Grundlehren der Mathematischen Wissenschaften [Fundamental Principles of Mathematical Sciences], 343. {\it Springer, Heidelberg}, 2011. xvi+523 pp.


\bibitem{BJQ2015} Balan, R.;  Jolis,  M. and  Quer-Sardanyons,  L. SPDEs with affine multiplicative fractional noise in space with index $\frac {1}{4}< H<\frac {1}{2} $. Electronic Journal of Probability.  20 (2015).



\bibitem{Chen2016}  Chen, X.  Spatial asymptotics for the parabolic Anderson models with generalized time-space Gaussian noise. Ann. Probab. 44 (2016), no. 2, 1535-1598.

\bibitem{CHNT2017}
 Chen, X.; Hu, Y.; Nualart, D. and  Tindel, S.  Spatial asymptotics for the parabolic Anderson model driven by a Gaussian rough noise. Electron. J. Probab. 22 (2017), Paper No. 65, 38 pp.

\bibitem{DMD2013} Conus, D.; Joseph, M. and  Khoshnevisan,  D. On the chaotic character of the stochastic heat equation, before the onset of intermitttency. The Annals of Probability,   41 (2013),  2225-2260.
\bibitem{GJ2014} Da Prato, G. and Zabczyk, J. Stochastic equations in infinite dimensions. Cambridge university press, 2014.

\bibitem{DMDS2013} Conus, D.;  Joseph,  M.;  Khoshnevisan, D.  and Shiu,  S.Y. On the chaotic character of the stochastic heat equation, II. Probability Theory and Related Fields.  156 (2014),   483-533.

\bibitem{D2015} Dirksen S. Tail bounds via generic chaining. Electronic Journal of Probability. 20(2015).

\bibitem{G1998} Gy\"ongy, I. Existence and uniqueness results for semilinear stochastic partial differential equations. Stochastic Processes and their Applications.  73 (1988),   271-299.

\bibitem{hu19} Hu, Y. Some recent progress on stochastic heat equations.   Acta Math. Sci.  39 (2019),  874-914.


\bibitem{HHLNT2017} Hu, Y.;  Huang, J.;  L\^e, K.;  Nualart, D. and  Tindel,  S. Stochastic heat equation with rough dependence in space. The Annals of Probability.   45  (2017),   4561-616.



\bibitem{HHLNT2019} Hu, Y.;  Huang, J.;  L\^e, K.;  Nualart, D. and  Tindel,  S.  Parabolic Anderson model with rough dependence in space. Computation and combinatorics in dynamics, stochastics and control, 477-498, Abel Symp., 13, Springer,   2018.


\bibitem{HL2019} Hu, Y. and   L\^e, K. Joint H\"older continuity of parabolic Anderson model.    Acta Math. Sci.  39 (2019),   764-780.


\bibitem{HL2019b} Hu, Y. and   L\^e, K. Asymptotics of the density of parabolic Anderson random fields. 	arXiv:1801.03386 (2018).

\bibitem{IW}  Ikeda, N. and  Watanabe, S. Stochastic Differential Equations and Diffusion Processes, second edition,
 North-Holland Mathematical Library, vol. 24, North-Holland Publishing Co., Kodansha, Ltd., Amsterdam,
Tokyo, 1989.


\bibitem{K2014} Khoshnevisan D. Analysis of stochastic partial differential equations. American Mathematical Soc.; 2014 Jun 11.


\bibitem{KS1998} Karatzas I, Shreve SE. Brownian Motion and Stochastic Calculus. Springer. 1998.



\bibitem{K2007} Kurtz, T. The Yamada-Watanabe-Engelbert theorem for general stochastic equations and inequalities. Electronic Journal of Probability. 2007;12:951-65.

\bibitem{LHH2020} Li M, Huang C, Hu Y. Asymptotic separation for stochastic Volterra integral equations with doubly singular kernels. Applied Mathematics Letters. 2020 Nov 7:106880.

\bibitem{PT2000} Pipiras, V. and  Taqqu, M.S. Integration questions related to fractional Brownian motion. Probability theory and related fields. 2000 Oct 1;118(2):251-91.


\bibitem{SS2002} Sanz-Sol\'e,  M.  and  Sarr\`a,  M.
H\"older continuity for the stochastic heat equation with spatially correlated noise. In  Seminar on Stochastic Analysis, Random Fields and Applications III 2002 (pp. 259-268). Birkh\"auser, Basel.


\bibitem{Talagrand2014} Talagrand, M. Upper and lower bounds for stochastic processes: modern methods and classical problems. Vol. 60. Springer Science and Business Media, 2014.




\end{thebibliography}

\end{document}